\numberwithin{equation}{section} 
\newtheorem{Example}{Example}[section]
\newtheorem{Lemma}[Example]{Lemma}
\newtheorem{Theorem}[Example]{Theorem}
\newtheorem{Definition}[Example]{Definition}
\newtheorem{Remark}[Example]{Remark}
\newtheorem{Proposition}[Example]{Proposition}
\newtheorem{Corollary}[Example]{Corollary}
\newcommand{\intd}{\,\mathrm{d}} 
\newcommand{\mids}{\;\middle\vert\;}
\renewcommand{\thefootnote}{\arabic{footnote}}
\newcommand{\nc}{\newcommand}
\nc{\rnc}{\renewcommand}
\nc{\nenv}{\newenvironment}
\nc{\mb}{\mathbb}
\rnc{\b}{\textbf}
\nc{\bs}{\boldsymbol}
\nc{\R}{\mb{R}}
\nc{\B}{\mathcal{B}}
\nc{\tn}{\textnormal}
\def\Rdp{\ensuremath [0,\infty)^d}
\def\Rdpo0{\ensuremath \Rdp \setminus \{\b 0\}}
\def\RIpo0{\ensuremath [0,\infty)^{|I|} \setminus \{\b 0\}}
\begin{document}

\begin{frontmatter}

\title{Exponent dependence measures of survival functions and correlated frailty models}

\author{J. Bendel$^{a}$, D. Dobler$^{b}$, A. Janssen$^{c,*}$\let\thefootnote\relax\footnote{${}^{*}$ Corresponding author; full address: Mathematical Institute, University of D\"usseldorf, Universit\"atsstr.~1, 40225 D\"usseldorf, Germany; Telephone number: 0049 211 8112165\\ 
  E-mail addresses: dennis.dobler@uni-duesseldorf.de (D. Dobler), jens.bendel@uni-duesseldorf.de (J. Bendel), janssena@uni-duesseldorf.de (A. Janssen) \\
}}

  \address{${}^{a}$Departement of Mathematics and Statistics, University of Reading, UK}
  \address{${}^{b}$Institute of Statistics, Ulm University, Germany}
  \address{${}^{c}$Mathematical Institute, University of D\"usseldorf, Germany}

\begin{abstract}

The present article studies survival analytic aspects of semiparametric copula dependence models
with arbitrary univariate marginals.
The underlying survival functions admit a representation via exponent measures 
which have an interpretation within the context of hazard functions.
In particular, correlated frailty survival models are linked to copulas.
Additionally, the relation to exponent measures of minumum-infinitely divisible distributions
as well as to the L\'evy measure of the L\'evy-Khintchine formula is pointed out.
The semiparametric character of the current analyses 
and the construction of survival times with dependencies of higher order are carried out in detail.
Many examples including graphics give multifarious illustrations.

\end{abstract}

\begin{keyword}

Copula \sep correlated frailty model \sep survival analysis \sep multi-dimensional hazard function
\sep dependence measure \sep infinite divisibility

\end{keyword}

\end{frontmatter}

\today

%
%
%
%
\section{Introduction} \label{sec:introduction}
Multivariate semiparametric dependence models have been successfully developed during the past decades.
We refer to the huge amount of copula literature; see \cite{Joe97} and \cite{Nelsen06} among others.
It is well-known that Sklar's (1959)\nocite{Sklar59} theorem allows 
to separate a multivariate distribution in a copula dependence part and marginal distribution functions
which are often considered as nuisance parameters.
On the other hand, failure rates and hazard functions are very meaningful in survival analysis for dependent life time data.
The famous Cox regression model specifies the dependence via exponential hazard dependence models 
with baseline hazard nuisance parameters, cf. \cite{Cox72}.
The hazard-based Cox models were extended by correlated frailty models 
which allow hazard dependence including flexible individual effects for life times;
see \cite{Duchateau08}, \cite{Hougaard00}, \cite{Wienke11} and \cite{Aalen08}.
The shared frailty model appears as a special case whose connection to Archimedean copulas was pointed out by
\cite{Kimberling74}, \cite{Marshall88}, \cite{Oakes89} and \cite{McNeil09};
see also \cite{Genest86}, \cite{Duchateau08} and \cite{Voelker10} for further discussions.

It is the aim of this paper to study the multivariate dependence structure from the survival analysis point of view,
in particular for the survival functions of copulas.
We thereby like to connect the copula dependence structure with hazard quantities.
Our hazard dependence functions (and dependence measures) have a direct interpretation in survival analysis
and they may serve as parameters for statistical dependence modeling.
Recall that, under continuity, univariate hazard measures $\Lambda$ are exponent measures of survival functions $S$, 
i.e. $S(t) = \exp(-\Lambda(t))$ for continuous $S$; 
see Gill (1993)\nocite{Gill93a, Gill93b} and \cite{Dabrowska96} for exponents of multivariate survival functions.

The article is organized as follows. 
The basics of multivariate survival analysis are presented in Section~\ref{sec:surv_func}.
Along with the work of Gill (1993)\nocite{Gill93a, Gill93b} and \cite{Dabrowska96}
the exponent representation of Lemma~\ref{Lemma1-2} for multivariate survival functions is recalled
which later leads to so-called exponent measures of dependence.
It is also connected to the concept of ``local dependence hazard'' parameters studied earlier by \cite{JR02} in the bivariate case.
Section~\ref{sec:exponent_measures_of_dependence} links the exponent dependence measure 
to earlier analyzed exponent measures of \cite{Resnick87} for minumum-infinitely divisible distributions in extreme value theory.

In Section~\ref{sec:frailty} signed exponent measures of dependence are studied in detail 
for survival functions of correlated frailty models.
These are introduced as multivariate exponentially distributed variables with random scales $\b W=(W_1,\dots,W_d)$.
In terms of the Laplace transform of $\b W$ explicit analytic formulas are derived for their densities, hazards and exponent measures,
in particular, for the special examples of dependent $\chi^2$ or log-normal random scales $W_i$.
If $\b W$ is sum-infinitely divisible, then the dependence exponent measures are linked to the L\'evy measure
given by the L\'evy-Khintchine formula for $\b W$; see Section~\ref{sec:frailty_id}.

Section~\ref{sec:semiparametric_dependence_models} is devoted to semiparametric survival models with arbitrary marginals.
It is shown that the exponent dependence measures can be parametrized in a copula manner.
The underlying parametrized dependence quantities are later visualized 
for some examples of correlated frailty models; see Figures~\ref{fig:gamma_0_fuer_clayton} to~\ref{fig:gamma_0_fuer_Zj2}.
Together with the motivation given by the upcoming equations~\eqref{1Punkt12}--\eqref{eq:interpretation_lambda_12_local_dep_hazard}
the plots have a meaningful hazard-based interpretation regarding the quality of dependence.
For instance, there exist copulas with overall proportional hazard dependence for all bivariate submodels
but without dependence for exponent measures of higher order.
On the contrary, copula models can be constructed 
for which all proper subvectors are independent but the whole vector has a non-trivial dependence measure;
see Section~\ref{sec:higher_dep} for an elaboration of such models via hazard functions.
Additional examples and graphs as well as some technical details are presented in the appendix to this article.

%
%
%
%
\section{Survival Functions} \label{sec:surv_func}
A copula $C$ is a distribution function on the $d$-dimensional unit cube $[0,1]^d$ with uniformly distributed marginals. 
It is well known that copulas describe the dependence structure of $ \mathbb{R}^d$-valued random variables
$\b T=(T_1, \ldots , T_d)$ whereas the distributions of the one-dimensional marginals $T_i $ are often regarded as 
nuisance parameters in statistics.

In this paper we will study survival functions and their corresponding survival copulas which are very useful in risk and survival analysis 
when multivariate failure rates and hazard rates have a concrete meaning; see for instance Gill (1993)~\nocite{Gill93a, Gill93b} and \cite{Dabrowska96}. 
For a pair of random variables the hazard rate dependence  approach is a meaningful description of the 
survival copula dependence structure; see \citet{JR02}.
They showed that an exponential dependence measure exists which naturally explains the 
dependence structure.
At other times we like to apply the copula concept in order to convert all marginals to uniform distributions,
eliminating all nuisance parameters in doing so.

Throughout let $\b T=(T_1, \ldots , T_d):\Omega \rightarrow \mathbb{R} ^d$ denote a random variable 
with $d$-dimensional continuous distribution function $F$
and let $F_i$ and $S_i(t_i) = 1-F_i(t_i) = P(T_i>t_i)$, $t_i \in \mathbb{R}$,
be the marginal distribution and survival functions.
The $d$-dimensional survival function is given by
\begin{equation}\label{1p1}
	S(t_1,\ldots,t_d)=\text{P}(T_1>t_1,\ldots,T_d>t_d).
\end{equation}
To fix the notation let $ F_i^{-1} $ ($S_i^{-1}$) denote the left-sided continuous inverse distribution (survival) 
function of $F_i$ $(S_i)$. In our continuous case the variables
\begin{equation}\label{eq:trans_marg}
	\text{(a)}\ (F_1(T_1),\ldots,F_d(T_d))  \quad \text{and} \quad \text{(b)}\ (S_1(T_1),\ldots,S_d(T_d))
\end{equation}
both define copulas on $[0,1]^d$:
The distribution function $C$ of (a) is simply called \emph{copula} of $\b T$ and similarly 
the distribution function $C_s$ of (b) is defined as the \emph{survival copula} of $\b T$, cf. \cite{Nelsen06}, Section 2.6, as well as \cite{McNeil09}.
We see that
\begin{equation*}
	(v_1,\ldots,v_d) \mapsto C_s(1-v_1,\ldots,1-v_d)
\end{equation*}
is just the survival function (\ref{1p1}) of $C$. 
To avoid further confusions we like to stress the necessity to clearly distinguish 
between the survival copula $C_s$ and the survival function of the copula $C$.
These concepts lead to different dependence models for survival times.

Furthermore, the survival function $S$ and the survival copula are connected by
\begin{equation}\label{1p5}
	S=C_s(S_1,\ldots,S_d)\quad \text{and} \quad C_s=S\left(S_1^{-1},\ldots,S_d^{-1}\right).
\end{equation}
Thus, every survival function $S$ can easily be reproduced by its survival copula $C_s$ and the continuous 
marginal survival functions. 

In order to study multivariate survival functions in more detail we first recall elements 
of univariate survival analysis for a continuous real valued random variable $T$. 
For a detailed introduction we refer to the books by \cite{Klein03} and \cite{Aalen08}.
If we put $x_0=\text{inf}\{x:\text{P}(T\le x)>0  \}$, then there exists a so-called hazard measure $\Lambda$ 
on the interval $(x_0,\infty)$ with cumulative hazard function $\Lambda (t):=\Lambda (-\infty,t]$ given by
\begin{equation} \label{eq:one_and_anoter_one}
	\frac{\intd\Lambda}{\intd F} = \frac 1 S \quad  \text{and} \quad  P(T>t)= 1 - F(t) = S(t) = \exp(- \Lambda (t)).
\end{equation}
The hazard measure $\Lambda$ can be viewed as univariate exponent measure of the survival function.
Whenever $F$ has a Lebesgue density $f$ then $\Lambda(t)=\int_{x_0}^{t} \lambda (u) \ \mathrm{d}u $ 
holds for the hazard rate $\lambda(u):=\frac{f(u)}{S(u)}$. 
For example, the hazard measure of a standard exponentially distributed random variable is just 
the Lebesgue measure $\lambda \!\! \lambda|_{(0,\infty)}$ on $(0,\infty)$. 

In the next step exponential representations of multivariate survival functions are studied which are used to extend the univariate hazard formula~\eqref{eq:one_and_anoter_one}. 
Special models and exponent measures are examined in the proceeding sections. 
Consider an index set $\emptyset \neq I\subset I_0 := \{1,\ldots,d\}$. Then 
\begin{equation*}
  \Omega_I:=\lbrace(t_i)_{i \in I}\in \mathbb{R}^{|I|} : \text{P}(T_i>t_i \ \text{for all i} \in I) >0 \rbrace
\end{equation*}
denotes the domain of the survival function of the marginals $(T_i)_{i \in I}$. 
The following Lemma 
is written in the spirit of Gill (1993)~\nocite{Gill93a,Gill93b} who studied the survival function in the context of product integrals. 
The proof follows by induction;
see also \cite{Dabrowska96}, equation~(1.1).
\begin{Lemma} \label{Lemma1-2}
	Consider the continuous survival function $S$ of $\b T=(T_i)_{i \in I_0}$ 
	and let $S^J$ denote the survival function of the $J$-dimensional marginal vector $(T_j)_{j \in J}, J \subset I_0$.
	Define for a single subset 
	$I=\lbrace i \rbrace$ the function $S_{I}$ on $\Omega_{I}$ as the marginal 
	$S_{I}=S_i$. For every $I \subset I_0$, $|I| \geq 2$, 
	there exists a positive function $S_{I}:\Omega_{I}\rightarrow \mathbb{R}$ 
	which is uniquely determined by the following representation of marginal survival functions.
	For each index set $\emptyset \neq J \subset I_0$ the marginal survival function
	admits the factorization
	\begin{equation}\label{1p9}
		S^J=\prod_{I \subset J} S_{I} \quad \text{on } \Omega_{J}
	\end{equation}
	where the product is taken over all subsets 
	$\emptyset \not= I \subset J$. 
	We call 
	$(S_{I})_{I, |I|=r}$ the dependence parts of order $2 \le r \le d$ which 
	are unique on their domain.
\end{Lemma}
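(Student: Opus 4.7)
The plan is to construct the dependence parts $S_I$ for $|I|\geq 2$ by strong induction on $|I|$, verifying the factorization~\eqref{1p9} simultaneously. The base case is covered by the definition $S_{\{i\}}:=S_i$ from the statement, for which~\eqref{1p9} with $J=\{i\}$ is a tautology.

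For the inductive step, fix $r\geq 2$ and assume that, for every nonempty $K\subsetneq I_0$ with $|K|<r$, a strictly positive function $S_K$ on $\Omega_K$ has been constructed so that~\eqref{1p9} holds for every $J$ with $|J|<r$. For $I\subset I_0$ with $|I|=r$, the equation~\eqref{1p9} applied to $J=I$ reads
\begin{equation*}
S^I \;=\; S_I\cdot\!\!\prod_{\emptyset\neq K\subsetneq I}\!\! S_K,
\end{equation*}
which forces the definition
\begin{equation*}
S_I \;:=\; \frac{S^I}{\prod_{\emptyset\neq K\subsetneq I} S_K}.
\end{equation*}
Since everything on the right is already determined by the induction hypothesis, existence and uniqueness on $\Omega_I$ follow as soon as the quotient is shown to be well-defined and positive.

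Well-definedness rests on the observation that $\Omega_I$ projects into each $\Omega_K$ for $K\subset I$: if $(t_i)_{i\in I}\in\Omega_I$, then monotonicity of $\text{P}$ yields
\begin{equation*}
\text{P}(T_k>t_k\text{ for all }k\in K)\;\geq\;\text{P}(T_i>t_i\text{ for all }i\in I)\;>\;0,
\end{equation*}
so the projection lies in $\Omega_K$. The inductive positivity of each $S_K$ then makes the denominator strictly positive, and since $S^I>0$ on $\Omega_I$, we obtain $S_I>0$ there.

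Finally, once the construction terminates at $r=d$, equation~\eqref{1p9} holds for every nonempty $J\subset I_0$, because \eqref{1p9} for that particular $J$ is precisely the defining equation for $S_J$ in terms of the previously constructed $S_K$ with $K\subsetneq J$. Essentially the argument is a purely algebraic rearrangement; the only point requiring care is the domain-projection step and the bookkeeping of positivity through the induction. Continuity of $S$ plays no role in existence or uniqueness itself and is only needed to ensure that each $S_I$ inherits a continuous version, which is useful for the hazard-theoretic interpretation in later sections.
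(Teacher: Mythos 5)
Your proposal is correct and is exactly the induction the paper alludes to (the paper itself only remarks that "the proof follows by induction" and refers to Dabrowska's equation (1.1)): you define $S_I$ recursively as $S^I$ divided by the product of the lower-order parts, and the only genuine checks — that $\Omega_I$ projects into $\Omega_K$ for $K\subset I$ and that positivity propagates — are handled properly. Your closing observation that continuity is irrelevant to existence and uniqueness is also accurate; it matters only for the subsequent hazard-measure interpretation.
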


For a singleton $I=\lbrace i \rbrace$ the cumulative hazard function $\Lambda _i$ corresponds to an exponent measure, 
$S_i=\exp(-\Lambda_i)$. In the same manner we can, for all ${I} \subset I_0$ with $|I| \geq 2$, define an exponent function $\Lambda_{I}$ on the domain 
$\Omega_{I}$ by
\begin{equation} \label{eq:quasi_Def_von_Lambda_I}
	S_{I}=\exp \left( (-1)^{|I|}\Lambda _{I} \right).
\end{equation}
As we will see, the higher dimensional  exponents are typically linked to signed exponent measures of dependence.
As motivation and in order to give a proper interpretation let us first recall 
the hazard dependence approach for $T_i \geq 0$ and dimension $d=2$:
\cite{JR02}
pointed out that $\Lambda_{\lbrace 1,2 \rbrace}$ is a signed exponent measure. 
If $S$ has a density then 
$\Lambda_{\lbrace 1,2 \rbrace}$ also has a density $\lambda_{\lbrace 1,2 \rbrace}$ on 
$\mathbb{R}^2$ and (\ref{1p9}) writes as
\begin{equation}\label{1Punkt12}
	S(t_1,t_2) =  S_1(t_1)  S_2(t_2)  \exp\left(\int_0^{t_1} \int_0^{t_2} \lambda_{\lbrace1,2\rbrace}(u_1,u_2)  \intd u_1 \intd u_2 \right).
\end{equation} 
Recall again
the interpretation of the univariate hazard 
function $\lambda_1(u)=\frac{f_1(u)}{S_1(u)}$ 
as failure rate. 
Under smoothness of $f_1$ and for small $\epsilon > 0$ we approximately have,
given the event $\{T_1 \geq t_1\}$,
\begin{equation}
	\text{P}(T_1\in [u,u+\epsilon] \ | \ T_1\geq u)  =  \frac{\int_u^{u+\epsilon}f_1(x) \intd x}{S_1(u)}   \approx  \epsilon  \lambda_1(u).
\end{equation}
In the same way $\lambda_{\lbrace1,2\rbrace}$ has an interpretation as ``local dependence hazard" of 
$T_1$ and $T_2$. 
For smooth densities, $B=\lbrace T_1 \geq t_1, T_2 \geq t_2 \rbrace$ and small $\epsilon_1,\epsilon_2 >0$ \cite{JR02} verified that
\begin{equation} \label{eq:interpretation_lambda_12_local_dep_hazard}
	\begin{split} 
		&\text{P} (T_1\in [t_1,t_1+\epsilon_1], T_2\in [t_2,t_2+\epsilon_2]\ | \ B) \\
		& -\text{P} (T_1\in [t_1,t_1+\epsilon_1] \ | \ B) \ \text{P}(T_2\in [t_2,t_2+\epsilon_2] \ | \ B) \\
		&\quad \approx \ \lambda_{ \lbrace 1,2 \rbrace} (t_1,t_2)\  \epsilon_1 \ \epsilon_2
	\end{split}
\end{equation}
holds.
They also showed that 
the exponent dependence measure and its density are extremely useful for testing independence for 
randomly censored survival models. 
In their work the reader will find a lot of examples for hazard dependence structures
and efficient tests for independence of the marginals at dimension 2; see also Section~\ref{sec:higher_dep} below.
%
%
%
\section{Exponent dependence measures of minimum-infinitely divisible distributions} \label{sec:exponent_measures_of_dependence}
In this section we link the exponents of dependence $\Lambda_I$ to already known measures of dependence. 
Observe that in extreme value theory mini\-mum-infinitely divisible distributions allow related exponent representations.

A distribution on $\mathbb R^d$ with survival function  $S$ is called minimum-infinitely divisible if $S^{1/n}$ is a survival function for each $n\in\mathbb N$.
For obvious reasons we will focus on the copula case.
It is well known \citep[cf.][Section~5.3, switching from the $\max$ to the $\min$ operation]{Resnick87} 
that, under minimum-infinite divisibility, there exists a possibly unbounded exponent Radon measure $\mu$ on $[0,1]^d\setminus \{\b 1 \}$ with 
\begin{align} \label{eq:S=exp(-mu)_min_id}
	S(\b x) = \exp\left( -\mu\left( (\b x, \b 1]^{\mathcal C} \right) \right),
\end{align}
where $(\b x, \b 1]^{\mathcal C}$ is the complement of $(x_1,1] \times \dots \times (x_d,1]$ in $[0,1]^d$
and $\b 1 = (1,\dots,1) \in \R^d$.
To get an impression of \eqref{eq:S=exp(-mu)_min_id} consider the bivariate case $d=2$:
\begin{Example} \label{example:biv_mu_example}
	(a) 	Let $S = S_1 S_2$ be the survival function of the independence copula of dimension $2$.
		Then $\mu = \Lambda_1 \otimes \varepsilon_1 +\varepsilon_1 \otimes \Lambda_2$ lies on the upper 
		boundary of $[0,1]^2$ with $\Lambda_i(t) = \int_0^t \frac{\intd x}{1-x}$, $t \in [0,1)$,
		and $\varepsilon_1$ being the Dirac measure in 1.
	\\
	(b) 	If $S = S_1 S_2 \exp\left( \Lambda_{\{1,2\}} \right) $ is minimum-infinitely divisible and given by \eqref{eq:S=exp(-mu)_min_id} then
		\begin{align*} 
			\Lambda_{\{1,2\}} \left( [0,x_1] \times [0,x_2] \right) = \mu\left( [0,x_1] \times [0,x_2] \right)
		\end{align*}
		holds for all $0\leq x_i < 1$ if $S$ is continuous.
\end{Example}
\begin{Remark}
	(a) 	The distribution of two non-negative random variables with local dependence hazard
		$\lambda_{\{1,2\}}$ 
		(cf. Proposition~\ref{prop:bivariate_lambda} and the discussion after 
		Lemma~\ref{Lemma1-2})
		is minimum-infinitely divisible iff
		$\lambda_{\{1,2\}}$ is non-negative. \cite{Joe97}, Theorem~2.7, proves this and a 
		similar result for arbitrary dimensions $d \geq 2$.
	\\
	(b) 	Part (b) of Example~\ref{example:biv_mu_example} is a special case of the following Proposition~\ref{prop:identification_Lambda-I-s_vs_mu-I-s},
		from which we conclude that
		the behaviour of $\mu$ in the interior of $[0,1]^d$ determines the dependence structure of $S$.
\end{Remark}
\begin{Proposition} \label{prop:identification_Lambda-I-s_vs_mu-I-s}
	Let $S$ be the survival function of a continuous minimum-infinitely divisible copula given by \eqref{eq:S=exp(-mu)_min_id}.
	For each index set $\emptyset \neq I \subset I_0$ the function $\Lambda_I$ given by 
	\eqref{eq:quasi_Def_von_Lambda_I} is a measure generating function of a positive measure on $[0,1)^{|I|}$ with 
	\begin{align} \label{eq:identification_Lambda-I-s_vs_mu-I-s}
		\Lambda_I = \mu_I\vert_{[0,1)^{|I|}}
	\end{align}
	where $\mu_I := \mathfrak{L}\left( \pi_I \mids \mu\vert_{ [0,1]^d \setminus \pi_I^{-1} (\{(1,\dots,1)\}) }\right)$ 
	is the image measure of the canonical projection $\pi_I : [0,1]^d \to [0,1]^{|I|}$ w.r.t. the restricted measure of $\mu$
	on $[0,1]^d \setminus \pi_I^{-1} (\{(1,\dots,1)\})$.
\end{Proposition}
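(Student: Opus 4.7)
The plan is to combine Lemma~\ref{Lemma1-2} and definition~\eqref{eq:quasi_Def_von_Lambda_I} via M\"obius inversion, expand each marginal log-survival $\log S^J$ by means of~\eqref{eq:S=exp(-mu)_min_id}, and collapse the resulting double sum through inclusion--exclusion into a single $\mu$-measure of a lower rectangle. Applying M\"obius inversion to $\log S^J=\sum_{\emptyset\neq K\subset J}\log S_K$ and invoking $\log S_K=(-1)^{|K|}\Lambda_K$ yields
\begin{equation*}
	\Lambda_I(\b x_I)=\sum_{\emptyset\neq J\subset I}(-1)^{|J|}\log S^J(\b x_J).
\end{equation*}

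Next I would read each marginal value $S^J(\b x_J)$ as $S(\tilde{\b x})$ with $\tilde x_i=x_i$ for $i\in J$ and $\tilde x_i=0$ for $i\in I_0\setminus J$, so that \eqref{eq:S=exp(-mu)_min_id} rewrites as $-\log S^J(\b x_J)=\mu(B_J)$ with
\begin{equation*}
	B_J=\bigcup_{i\in J}\{z_i\le x_i\}\,\cup\,\bigcup_{j\in I_0\setminus J}\{z_j=0\}.
\end{equation*}
The key simplification is that continuity of the marginals kills the boundary piece: applying the same representation to the singleton $J=\{i\}$ at $x_i=0$ gives $\mu\bigl(\bigcup_{j\in I_0}\{z_j=0\}\bigr)=-\log S_i(0)=0$, so $\mu(\{z_j=0\})=0$ for every $j\in I_0$. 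Consequently $\mu(B_J)=\mu\bigl(\bigcup_{i\in J}\{z_i\le x_i\}\bigr)$, and expanding via inclusion--exclusion and swapping the order of summation reduces $\Lambda_I(\b x_I)$ to
\begin{equation*}
	\sum_{\emptyset\neq K\subset I}\mu\Bigl(\bigcap_{i\in K}\{z_i\le x_i\}\Bigr)\,\sum_{J:\,K\subset J\subset I}(-1)^{|J|+|K|}.
\end{equation*}
A standard identity gives $\sum_{J:\,K\subset J\subset I}(-1)^{|J|+|K|}=(1-1)^{|I\setminus K|}=\mathbf 1_{\{K=I\}}$, so only the full intersection survives.

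Hence $\Lambda_I(\b x_I)=\mu\bigl(\pi_I^{-1}([\b 0,\b x_I])\bigr)$. Since $\b x_I\in[0,1)^{|I|}$, this inverse image is disjoint from $\pi_I^{-1}(\{(1,\dots,1)\})$ and therefore coincides with $\mu_I([\b 0,\b x_I])$ by the definition of the image measure. This simultaneously yields~\eqref{eq:identification_Lambda-I-s_vs_mu-I-s} and shows that $\Lambda_I$ is the measure-generating function of the positive Radon measure $\mu_I\vert_{[0,1)^{|I|}}$, since higher-order rectangle increments of $\Lambda_I$ then equal $\mu_I$-measures of half-open rectangles and are thus non-negative. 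The main obstacle is the continuity step ruling out mass of $\mu$ on the coordinate hyperplanes $\{z_j=0\}$; once that is established, the M\"obius/inclusion--exclusion bookkeeping is routine combinatorics and the final identification is essentially a restatement of the definition of $\mu_I$.
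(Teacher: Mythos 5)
Your argument is correct and is essentially the paper's own proof: both rest on the sieve/inclusion--exclusion identity $\b 1_{(\b x,\b 1]^{\mathcal C}}=\sum_{\emptyset\neq J}(-1)^{|J|+1}\prod_{j\in J}\b 1_{[0,x_j]}$ combined with the uniqueness of the factorization in Lemma~\ref{Lemma1-2}, which you implement via M\"obius inversion and the binomial collapse $\sum_{K\subset J\subset I}(-1)^{|J|+|K|}=\b 1\{K=I\}$ rather than the paper's step-by-step identification. Your explicit verification that $\mu$ puts no mass on the hyperplanes $\{z_j=0\}$ (from $S(\b 0)=1$) is a detail the paper leaves implicit when it identifies $\mu_J$ as the exponent measure of $S^J$, and it is handled correctly.
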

\begin{proof}
	The proof relies on an inclusion-exclusion principle type of argument (also known as the sieve formula or Sylvester-Poincar\'e  equality). 
	For $\b x\in [0,1)^d$ observe that
	\begin{align*}
		\b 1_{(\b x, \b 1]^{\mathcal C}} = \sum_{\emptyset \neq J \subset I_0} (-1)^{|J|+1} \prod_{j\in J} \b 1_{[0,x_j]}
	\end{align*}
	and
	\begin{align*}
		\mu \left( (\b x,\b 1]^{\mathcal C} \right) = \sum_{\emptyset \neq J \subset I_0} (-1)^{|J|+1} \mu_J \Big( \prod_{j\in J} [0,x_j] \Big)
	\end{align*}
	hold, where $\b 1_A$ denotes the indicator function of a set $A$. 
	Moreover, $\mu_J$ is the exponent measure \eqref{eq:S=exp(-mu)_min_id} of the $|J|$-dimensional marginal survival function $S^J$. 
	Hence, $\Lambda_I$ and $\mu_I\vert_{[0,1)^{|I|}}$ can be identified step by step. For $|I| = 1 $ the univariate hazard measures are obtained. 
	In genereal, when all $\Lambda_J$ with $J\subset I$, $J\neq I$, are identified, 
	then the remaining part $\Lambda_I$ must be given by \eqref{eq:identification_Lambda-I-s_vs_mu-I-s}.
	Note that $S$ is completely determined by $S(\b x)$ for $\b x\in [0,1)^d$.
\end{proof}
\begin{Remark}
		Suppose that $\b T = (T_1, \dots, T_d)$ has a minimum-infinitely divisible distribution with exponent measure $\mu$ given by \eqref{eq:S=exp(-mu)_min_id}
		and that $S$ is continuous. 
		The following statements are equivalent (cf. \cite{Resnick87}, Section~5.5, for the equivalence of $(i)$ and $(ii)$).
		\begin{enumerate}[(i)]
			\item The components $T_1,\dots,T_d$ are independent random variables.
			\item The components $T_1,\dots,T_d$ are pairwise independent, i.e. $T_i$ and $T_j$ are independent random variables for every $1\leq i < j \leq d$.
			\item For every $I\subset I_0$ with $|I|=2$ the bivariate exponent measure $\Lambda_I$ given by \eqref{eq:quasi_Def_von_Lambda_I} is equal to zero.
			\item For every $I\subset I_0$ with $|I|\geq 2$ the exponent measure $\Lambda_I$ given by \eqref{eq:quasi_Def_von_Lambda_I} is equal to zero.
		\end{enumerate}
\end{Remark}
%
%
%
%
%
\section{Dependence measures for correlated frailty models}
\label{sec:frailty}
As explained in Section~\ref{sec:surv_func} the univariate marginals are irrelevant when the dependence structure is studied 
within a semiparametric context. This is a chance to choose in a first step the marginals in a convenient
way as we did in~\eqref{eq:trans_marg}. In a second step the marginals may be transformed if necessary. For this reason 
we will first introduce a correlated frailty model for special marginals which gives more insight in the 
underlying dependence structure. Originally the frailty concept shows up in multivariate survival 
analysis when individual hazard effects are modeled by an additional factor attached to famous Cox 
models; see \cite{Andersen93} for further references.
The idea to link several individuals via the same factor leads to shared frailty models
and closely related Archimedean copulas; see Section~\ref{sec:introduction} for references to this connection.
Our next example shows that these particular copulas fit very well in the context of survival copulas.
\begin{Example}  \label{example:archimedean_copulas}
	 Let $\varphi: [0,\infty) \rightarrow [0,1]$ be the following generator of an Archimedean copula $K$: 
	$\varphi$ is a nonincreasing and continuous function satisfying
	$\varphi (0)=1$,\ $\lim_{x \rightarrow \infty}{\varphi(x)}=0$ and strictly decreasing 
	on the interval $[0, \inf\{x:\varphi(x)=0 \})$. Then $K$ is given by 
	\begin{equation*}
		K(u_1,\dots,u_d)=\varphi\left(\varphi^{-1}(u_1)+\dots+\varphi^{-1}(u_d)\right).
	\end{equation*}
	Obviously, $S_i := \varphi $ define marginal survival functions. If $(\zeta_1,\dots,\zeta_d)$ is 
	distributed according to the above Archimedean copula $K$ with generator $\varphi$ then by (\ref{1p5}) 
	\begin{equation}\label{1p8}
		(t_1,\dots,t_d) \mapsto \varphi(t_1+\dots+t_d)
	\end{equation}
	is the survival function of $\b T=(\varphi^{-1}(\zeta_1),\ldots,\varphi^{-1}(\zeta_d))$ with survival function $S_i=\varphi$ 
	of the marginals.
	Furthermore, $C_s = K$ is the survival copula of $\b T$.
\end{Example}
We see that the above choice of marginals yields a pleasant form of the survival function.
\begin{Remark}
  \citet{Okhrin13} study hierarchical Archimedean copulas, 
  which extend the class of Archimedean copulas. 
  They show that any hierarchical Archimedean copula can be uniquely recovered from all bivariate marginal copula functions.
\end{Remark}

We now introduce a class of correlated frailty models which admit meaningful hazard dependence structures.
The explanation below Lemma~\ref{lemma:surv_eq_lapl} exhibits the relation to the Archimedean copula structure~\eqref{1p8}.
Let $Q_\b 0$ denote a probability measure on $(0,\infty)^d$ with Laplace transform 
\begin{equation*}
	\psi(\b t):=\int \exp(-<\b t,\b x>) \ \intd Q_\b 0(\b x), \ \ \ \b t=(t_1,\ldots,t_d) \in [0,\infty)^d.
\end{equation*}
It will turn out that $\psi$ is a meaningful quantity for the multivariate survival time $\b X$ defined as follows.
\begin{Definition}
	Let $\b W=(W_1,\ldots,W_d)$ and $(Y_1,\ldots,Y_d)$ denote two independent $d$-dimensional random vectors with law
	$\mathfrak{L}(\b W)=Q_\b 0$ and let $Y_1,\ldots,Y_d$ be i.i.d. standard exponentially distributed. The random vector $\b X$ given by
	\begin{equation}\label{eq:2p2}
		\b X=(X_1,\ldots,X_d) := \left(\frac{Y_1}{W_1},\ldots,\frac{Y_d}{W_d}\right)
	\end{equation}
	is then called a \emph{correlated frailty model} based on $Q_\b 0$.
\end{Definition}
\begin{Remark}
	In multivariate survival analysis the concept of frailty was originally introduced 
	with equal variables $W_1 = W_2 = \dots = W_d$ (called \emph{shared frailty model}) 
	to model unobserved heterogeneity.
	For a more detailed discussion of the univariate \emph{proportional frailty model}, 
	including surveys of the model for different distributions of the frailty variable and extensions, 
	we refer to the books by 
	\cite{Wienke11}, Chapter~3,
	\cite{Duchateau08} and 
	\cite{Aalen08}, Chapter~6,
	as well as to the article by \cite{Voelker10}.
	In contrast to the shared frailty model, the \emph{correlated frailty model} is more flexible
	and it takes the dependency structure of the $W_i$'s into account.
	
	Usually the definitions of frailty models are given in terms of the hazard rates $\lambda_1,\dots,\lambda_d$ of $X_1,\dots,X_d$.
	Thus it is possible to include a baseline hazard as well as covariates in the model.
	As the focus of this article lies on dependence structures we omit covariates and choose (justified by the copula approach) 
	a possibly simple baseline hazard equal to one. These simplifications lead to the definition given by \eqref{eq:2p2}.
	
	A brief introduction to the correlated frailty model for the bivariate case (d = 2) 
	and its relation to Cox models can be found in the book by \cite{Wienke11}, Section~5.1.
\end{Remark}
The correlated frailty model is a multivariate exponential scale model with random scale parameter $\b W$. 
Any strictly increasing transformation of the coordinates of $\b X$ leads to a conditional Cox model given $\b W$. 
Subsequently, it turns out that the structure of \eqref{eq:2p2} is closely connected to the exponential family $(Q_\b t)_{\b t \in [0,\infty)^d}$ given by 
\begin{equation}\label{Formel2-3}
	\frac{\mathrm{d}Q_\b t}{\mathrm{d}Q_\b 0} (\b x) := \frac{\exp(-<\b t,\b x>)}{\psi(\b t)}.
\end{equation}
The following analytic properties of Laplace transforms and exponential families are well known; see  \cite{Barndorff78}.
\begin{Remark}
	(a) 	The Laplace transform $\b t \mapsto \psi(\b t)$ is a positive analytic function on $(0,\infty)^d$.
	\\
	(b) 	Consider $I \subset I_0$  and, for each $i \in I$, let $n_i \in \mathbb{N}_0$ be a multiplicity. 
		Let $\b t=(t_1,\ldots,t_d) \in [0,\infty)^d$ with $t_i>0$ whenever $n_i > 0$ holds. 
	Then
	\begin{equation}\label{2p4}
		\bigg( \prod_{i \in I} \frac{\partial^{n_i}}{\partial t_i^{n_i}} \bigg) \psi(\b t)= \psi(\b t) \int \prod_{i \in I} (-1)^{n_i} \ x_i^{n_i} \ \mathrm{d}Q_\b t(\b x).
	\end{equation}
	\\
	(c) 	Suppose that $(Y_1, \dots, Y_d): \Omega' \rightarrow \R^d$ is given by a probability measure $P'$ on $\Omega'$.
		Via projection we may assume that the underlying joint probability space is a product space 
		$\Omega=\Omega' \otimes (0,\infty)^d$ with product measure $P_\b 0 =P'\otimes Q_\b 0$ and $\b W(\omega',\b x)=\b x$ 
		is the identity of the second component. Then $(W_1,\ldots,W_d)$ can naturally be regarded as 
		random vector under $P_\b t=P' \otimes Q_\b t$, shortly under $Q_\b t$.
\end{Remark}
\begin{Lemma} \label{lemma:surv_eq_lapl}
	The following results hold for the correlated frailty model $\b X$.
	\\
	(a) The survival function $S(\b t)=\psi(\b t)$ coincides with the Laplace transform for all 
	$ \b t \in [0,\infty)^d$. 
	\\
	(b) The distribution of $\b X$ is concentrated on $(0,\infty)^d$. On this set it has the analytic density $f$
	\begin{equation}\label{2p5}
		\b t \mapsto f(t_1,\ldots,t_d) = (-1)^d \ \frac{\partial}{\partial t_1} \ldots \frac{\partial}{\partial t_d} \  \psi(t_1,\ldots,t_d).
	\end{equation}
	(c) For $t_i > 0$ and $\b t=(0,\ldots,0,t_i,0,\ldots,0)$ the $i$-th marginal hazard measure $\Lambda_i$ 
	  is given by $\Lambda_i(t_i)=-\log\psi(\b t)$ with $i$-th univariate hazard rate
	\begin{equation}\label{Formel2-6}
		\lambda_i (t_i) = - \frac{\partial}{\partial t_i} \log \psi (\b t) = \int W_i  \intd Q_\b t .
	\end{equation}
\end{Lemma}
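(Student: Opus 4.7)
The plan is to use the conditional independence structure of the model: conditional on $\mathbf W = \mathbf w$, the components $X_i = Y_i/w_i$ are independent with $X_i \mid \mathbf W = \mathbf w$ being exponentially distributed with rate $w_i$. All three claims will then follow from standard properties of Laplace transforms together with formula~\eqref{2p4}.

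For part (a), I would write
\begin{align*}
  S(\mathbf t)
  &= P\bigl(Y_1 > t_1 W_1,\dots,Y_d > t_d W_d\bigr) \\
  &= \int \prod_{i=1}^d P(Y_i > t_i w_i) \intd Q_{\mathbf 0}(\mathbf w)
   = \int \exp\bigl(-\langle \mathbf t, \mathbf w\rangle\bigr) \intd Q_{\mathbf 0}(\mathbf w) = \psi(\mathbf t),
\end{align*}
where the first equality uses the definition~\eqref{eq:2p2}, the second uses independence of $\mathbf W$ and $(Y_1,\dots,Y_d)$ together with conditional independence of the $Y_i$'s, and the third uses that each $Y_i$ is standard exponential. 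This also shows that $\mathbf X$ is concentrated on $(0,\infty)^d$ since $W_i > 0$ $Q_{\mathbf 0}$-a.s.

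For part (b), the conditional density of $\mathbf X$ given $\mathbf W = \mathbf w$ on $(0,\infty)^d$ equals $\prod_{i=1}^d w_i \exp(-w_i t_i)$, so by Fubini the unconditional Lebesgue density is
\begin{equation*}
  f(\mathbf t) = \int \Bigl(\prod_{i=1}^d w_i\Bigr) \exp\bigl(-\langle \mathbf t,\mathbf w\rangle\bigr) \intd Q_{\mathbf 0}(\mathbf w).
\end{equation*}
Applying the identity~\eqref{2p4} with $n_1=\dots=n_d=1$ and rewriting the expectation w.r.t.\ $Q_{\mathbf t}$ via~\eqref{Formel2-3} gives
\begin{equation*}
  (-1)^d \frac{\partial}{\partial t_1}\cdots\frac{\partial}{\partial t_d}\psi(\mathbf t) = \int \Bigl(\prod_{i=1}^d x_i\Bigr)\exp\bigl(-\langle \mathbf t,\mathbf x\rangle\bigr) \intd Q_{\mathbf 0}(\mathbf x),
\end{equation*}
which matches $f(\mathbf t)$. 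Analyticity of $f$ on $(0,\infty)^d$ is inherited from the analyticity of $\psi$ noted in the preceding remark.

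For part (c), the $i$-th marginal survival function is obtained from (a) by setting the other coordinates to zero: $S_i(t_i) = S(0,\dots,0,t_i,0,\dots,0) = \psi(\mathbf t)$. Hence $\Lambda_i(t_i) = -\log S_i(t_i) = -\log\psi(\mathbf t)$, and the chain rule combined with~\eqref{2p4} for the single multiplicity $n_i = 1$, $n_j = 0$ for $j \neq i$, yields
\begin{equation*}
  \lambda_i(t_i) = -\frac{\partial}{\partial t_i}\log\psi(\mathbf t)
  = -\frac{1}{\psi(\mathbf t)}\cdot\psi(\mathbf t)\int(-x_i)\intd Q_{\mathbf t}(\mathbf x) = \int W_i \intd Q_{\mathbf t},
\end{equation*}
as claimed. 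The only subtlety I anticipate is justifying the interchange of differentiation and integration (and Fubini in (b)); this is routine because the integrands are dominated uniformly on compact subsets of $(0,\infty)^d$ thanks to the exponential factor, so no measure-theoretic obstruction arises.
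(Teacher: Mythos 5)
Your proof is correct and follows essentially the same route as the paper's: Fubini/conditioning on $\b W$ for (a), differentiation of $\psi$ for (b), and the identity \eqref{2p4} for (c). If anything, your treatment of (b) is slightly more complete, since you construct the density explicitly as the $Q_{\b 0}$-mixture of the conditional exponential densities $\prod_{i=1}^d w_i e^{-w_i t_i}$ (thereby establishing its existence) before identifying it with $(-1)^d \, \partial_{t_1}\cdots\partial_{t_d}\psi$ via \eqref{2p4}, whereas the paper only argues that the density, \emph{if} it exists, must be given by formal differentiation of $\psi$.
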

Consider the special case that $W_1 = \dots = W_d$ coincide, i.e. $\b X$ is a shared frailty model. 
If $\varphi$ denotes the univariate Laplace transform of $W_1$,
then the survival function of $\b X$ collapses to $S(\b t) = \psi(\b t) = \varphi(t_1 + \dots + t_d)$ 
and~\eqref{eq:2p2} corresponds to the transformation of the Archimedean copula~\eqref{1p8}
given by the completely monotone function $\varphi$.
\begin{proof}[Proof of Lemma~\ref{lemma:surv_eq_lapl}]
	(a) Using Fubini's theorem we obtain
	\begin{eqnarray*}
		S(\b t) 	&=& \int \ \text{P}'\left(\frac{Y_1}{w_1}>t_1, \ldots, \frac{Y_d}{w_d}>t_d\right) \ \mathrm{d}Q_\b 0(w_1,\ldots,w_d) \\
			&=& \int \exp (-t_1w_1-\ldots-t_dw_d) \ \mathrm{d}Q_\b 0(w_1,\ldots, w_d)
	\end{eqnarray*}
	by the independence of the $Y_i$'s. \\
	(b) Whenever a density $f$ exists, we have
	\begin{equation*}
		\psi (\b t)=S(\b t)=\int_{t_1}^{\infty} \ldots \int_{t_d}^{\infty} f(u_1,\ldots,u_d) \ \mathrm{d}u_1\ldots \mathrm{d}u_d
	\end{equation*}
	by part (a) for each $\b t \in (0,\infty)^d$.
	Formal differentiation of $\b t\mapsto \psi(\b t)$ gives the analytic left-hand side of (\ref{2p5})
	and the result follows. \\
	(c) The first part follows since 
	$
		\Lambda _i (t_i) = -\log S_i(t_i) = - \log \psi(\b t).
	$
	Then its derivative $\frac{\partial}{\partial t_i} \Lambda_i(t_i)=\lambda_i(t_i)$ can be calculated by (\ref{2p4}).
\end{proof}
We see that distributional quantities like $S$ and $f$ are linked to $\psi$ and its derivatives. 
To see that the hazard quantities also rely on $\log \psi$, consider $S$ as in (\ref{1p9}). 
Then we have
\begin{equation*}
	\log S(\b t)=\sum_{I \subset I_0} \log S_{I} (\b t)=- \sum _{i=1}^d \Lambda _i(t_i) +\sum_{ \substack{ I \subset I_0 \\ |I|>1 }} (-1)^{|I|} \Lambda_I(\b t_I)
\end{equation*}
with first order terms $\Lambda_i(t_i)$.

The higher order terms for $|I|>1$ are now studied in detail and linked to (\ref{1Punkt12}). 
Throughout, the following notation is used.
Let $h:[0,\infty)^d \rightarrow \mathbb{R}$ denote a function with existing derivatives of sufficiently high order. 
Introduce for $\emptyset \neq I \subset I_0$ the following notation: For $\b x_I \in \mathbb{R}^{|I|}$
\begin{equation*}
\b x \mapsto h(\b x_I,\b 0)
\end{equation*}
is the function given by $h$ when the coordinates $x_j$, $j \notin I$, are zero.
Similarly, for a given $\b x \in \R^d$ we denote by $(\b x_I, \b 0)$ the $d$-dimensional vector with zero entries for all indices $j \notin I$
and with entries $x_i$ else.
For $I=\{ i_1, \ldots, i_r\}$ we also introduce
\begin{equation*}
	\frac{\partial}{\partial \b x_I} h(\b x_I,\b 0) = \frac{\partial}{\partial x_{i_1}} \ldots \frac{\partial}{\partial x_{i_r}} h(\b x_I,\b 0)
\end{equation*}
and
\begin{equation*}
	\int_{\b 0}^{\b y_I}h(\b x_I,\b 0) d\b x_I  = \int_{0}^{y_{i_1}} \ldots \int_{0}^{y_{i_r}} h(\b x_I,\b 0) \intd x_{i_1} \ldots \intd x_{i_r} , \quad \b y_I \in \R^{|I|}.
\end{equation*}
\begin{Lemma}
The logarithm of the survival function $S$ of a correlated frailty model is given by
\begin{equation}\label{FormelLemma24}
\log{S(\b t)}=\sum_{\emptyset \neq I \subset I_0} \int_{\b 0}^{\b t_I} \frac{\partial}{\partial \b x_{I}} \log{\psi(\b x_I,\b 0)} \intd \b x_I.
\end{equation}
\end{Lemma}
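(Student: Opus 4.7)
My strategy is to prove the identity by reducing it to a purely combinatorial Möbius/inclusion–exclusion identity that is valid for any sufficiently smooth function $g:[0,\infty)^d\to\mathbb{R}$ with $g(\b 0)=0$. Concretely, by Lemma~\ref{lemma:surv_eq_lapl}(a) we have $S(\b t)=\psi(\b t)$, and since $\psi$ is the Laplace transform of the probability measure $Q_\b 0$ we have $\psi(\b 0)=1$, so the function $g(\b t):=\log S(\b t)=\log\psi(\b t)$ satisfies $g(\b 0)=0$. Moreover, Remark~4.4(a) guarantees that $\psi>0$ is analytic on $(0,\infty)^d$, so all mixed partial derivatives $\frac{\partial}{\partial \b x_I}\log\psi(\b x_I,\b 0)$ exist, with boundary values interpreted as one-sided limits (which are finite since $\b W$ has all moments after the exponential weighting).

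I would first establish the following auxiliary identity via repeated application of the one-dimensional fundamental theorem of calculus: for any $\emptyset\neq I\subset I_0$ with $I=\{i_1,\ldots,i_r\}$,
\begin{equation*}
  \int_{\b 0}^{\b t_I}\frac{\partial}{\partial\b x_I} g(\b x_I,\b 0)\intd\b x_I
  \;=\;\sum_{J\subset I}(-1)^{|I|-|J|}\,g(\b t_J,\b 0).
\end{equation*}
This is obtained coordinate by coordinate: integrating $\partial/\partial x_{i_k}$ over $[0,t_{i_k}]$ produces the difference $g(\ldots,t_{i_k},\ldots)-g(\ldots,0,\ldots)$, and iterating expands into the signed sum over all subsets $J\subset I$.

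The next step is to sum over $\emptyset\neq I\subset I_0$ and interchange the order of summation, obtaining
\begin{equation*}
  \sum_{\emptyset\neq I\subset I_0}\int_{\b 0}^{\b t_I}\frac{\partial}{\partial\b x_I}g(\b x_I,\b 0)\intd\b x_I
  \;=\;\sum_{J\subset I_0} g(\b t_J,\b 0)\;\alpha_J,
  \quad\text{where }
  \alpha_J:=\sum_{\substack{I:\,J\subset I\subset I_0\\ I\neq\emptyset}}(-1)^{|I|-|J|}.
\end{equation*}
For $J\subsetneq I_0$ with $J\neq\emptyset$, a substitution $K=I\setminus J$ turns $\alpha_J$ into $\sum_{K\subset I_0\setminus J}(-1)^{|K|}=0$ by the binomial theorem. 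For $J=I_0$, only $I=I_0$ appears and $\alpha_{I_0}=1$. For $J=\emptyset$ one gets $\alpha_\emptyset=-1$, but this contribution disappears because $g(\b 0)=0$. Consequently the right-hand side collapses to $g(\b t_{I_0},\b 0)=\log\psi(\b t)=\log S(\b t)$, which is exactly \eqref{FormelLemma24}.

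The only technical point I expect to require a brief justification is the smoothness of $\log\psi$ and the finiteness of the iterated integrals, in particular at the boundary $\{t_i=0\}$. Since $\psi>0$ is analytic in the interior and the partial derivatives of $\log\psi$ are (by \eqref{2p4} and logarithmic differentiation) polynomials in the mixed moments $\int\prod_{i\in I}W_i^{n_i}\intd Q_\b t$ divided by positive quantities, these derivatives are continuous on $[0,\infty)^d$ (taking one-sided limits at the boundary). This justifies the repeated applications of the FTC and the interchange of finite summation and integration, completing the argument.
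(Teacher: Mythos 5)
Your argument is correct and is essentially the paper's own (one-line) proof written out in full: the paper invokes the fundamental theorem of multivariate calculus together with $\log\psi(\b 0)=0$ and an induction over $d$, and your repeated-FTC identity plus the binomial collapse of the coefficients $\alpha_J$ is exactly the unrolled, non-inductive form of that argument. The boundary/integrability caveat you flag is real but harmless (the integrands need only be defined and integrable on the interior, which holds since $-\log\psi(\cdot,\b 0)$ restricted to each coordinate is a finite cumulative hazard), and the paper glosses over it entirely.
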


\begin{proof}
Since $\log{\psi(\b 0)=0}$ holds, the main theorem of multivariate calculus yields (\ref{FormelLemma24}) by induction over $d$.
\end{proof}

This result identifies the exponents $\Lambda_I$ for $I\neq \emptyset$. We have
\begin{equation}\label{Formel2-11}
	\Lambda_I(\b t_I)= \int_{\b 0}^{\b t_{I}} \lambda_I(\b x_I) \intd \b x_I
\end{equation}	
where $\lambda_I(\b x_I) = (-1)^{|I|}  \frac{\partial}{\partial \b x_{I}} \log \psi(\b x_I,\b 0).$
Similarly as for Lemma \ref{Lemma1-2} the proof is given by induction over the dimension $d$. 
The first order (hazard) measures are described in (\ref{Formel2-6}) by the exponential family (\ref{Formel2-3}). 
In our context $\Lambda_I$ are measure generating functions of signed measures.
Here, the bivariate measures $\Lambda_I, |I| = 2,$ are determined by particularly interesting densities:
\begin{Proposition} \label{prop:bivariate_lambda}
	Consider $I=\{ i,j\} \subset I_0 , i\neq j$, and $\b t_I \in (0,\infty)^2$.
	Then the density $\lambda_I$ of the signed measure $\Lambda_I$ is given by the covariance structure of the exponential family:
	\begin{equation*}
		\lambda_I (\b t_I) = Cov_{Q_{(\b t_I,\b 0)}}(W_i,W_j).
	\end{equation*}
\end{Proposition}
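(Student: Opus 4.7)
The plan is to start from the explicit formula for $\lambda_I$ that was displayed just before the proposition, namely $\lambda_I(\b x_I) = (-1)^{|I|}\frac{\partial}{\partial \b x_{I}} \log \psi(\b x_I,\b 0)$. For $|I|=2$ with $I=\{i,j\}$, the sign factor equals $+1$ and the only derivative that matters is the mixed partial $\frac{\partial^2}{\partial t_i\partial t_j}\log\psi$ evaluated at $(\b t_I,\b 0)$. So the proposition reduces to the claim that this mixed second log-derivative of $\psi$ equals the covariance of $W_i,W_j$ under the exponentially tilted measure $Q_{(\b t_I,\b 0)}$.

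First I would compute the single partial with respect to $t_j$. By (2.4) with multiplicity one at index $j$ and zero elsewhere,
\begin{equation*}
  \frac{\partial}{\partial t_j}\log\psi(\b t) \;=\; \frac{1}{\psi(\b t)}\,\frac{\partial\psi}{\partial t_j}(\b t) \;=\; -\int x_j\,\intd Q_{\b t}(\b x) \;=\; -E_{Q_{\b t}}[W_j],
\end{equation*}
which is already the expression already encountered in (2.6). Next I would differentiate once more with respect to $t_i$. The cleanest way is to write $E_{Q_{\b t}}[W_j]$ as $\int x_j\,\frac{\exp(-\langle \b t,\b x\rangle)}{\psi(\b t)}\,\intd Q_{\b 0}(\b x)$ and apply the product rule under the integral; the two resulting terms are $-E_{Q_{\b t}}[W_iW_j]$ and $-\frac{1}{\psi}\frac{\partial\psi}{\partial t_i}E_{Q_{\b t}}[W_j]=E_{Q_{\b t}}[W_i]E_{Q_{\b t}}[W_j]$. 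Combining and undoing the outer minus sign gives
\begin{equation*}
  \frac{\partial^2}{\partial t_i\partial t_j}\log\psi(\b t) \;=\; E_{Q_{\b t}}[W_iW_j]-E_{Q_{\b t}}[W_i]E_{Q_{\b t}}[W_j] \;=\; \mathrm{Cov}_{Q_{\b t}}(W_i,W_j).
\end{equation*}
Evaluating at $\b t=(\b t_I,\b 0)$ and multiplying by $(-1)^{|I|}=1$ yields the stated identity.

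There is no serious obstacle: the whole argument is the classical fact that the log-Laplace transform of an exponential family acts as a cumulant generating function, whose mixed second derivatives recover covariances under the tilted laws. The only point deserving care is the interchange of differentiation and integration in (2.4), which is already justified by the analyticity of $\psi$ on $(0,\infty)^d$ noted in the remark after (2.3); this is precisely why the proposition is formulated for $\b t_I\in(0,\infty)^2$. Thus the proof is essentially a two-line computation after substituting the formula just above the proposition into the first partial and differentiating once more.
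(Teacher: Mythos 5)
Your proposal is correct and follows essentially the same route as the paper: both identify $\lambda_I$ via the formula $\lambda_I = (-1)^{|I|}\partial_{\b t_I}\log\psi(\b t_I,\b 0)$ from \eqref{Formel2-11} and then use \eqref{Formel2-3} and \eqref{2p4} to recognize the mixed second log-derivative as $E_{Q_{(\b t_I,\b 0)}}(W_iW_j)-E_{Q_{(\b t_I,\b 0)}}(W_i)E_{Q_{(\b t_I,\b 0)}}(W_j)$. The only cosmetic difference is that you differentiate $-E_{Q_{\b t}}[W_j]$ once more under the integral, whereas the paper applies the quotient rule to $\log\psi$ directly before invoking \eqref{2p4}; the content is identical.
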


\begin{Remark}
	(a) 	The local hazard dependence density $\lambda_{\{1,2\}}$ in~\eqref{eq:interpretation_lambda_12_local_dep_hazard} 
		has now an interpretation in terms of correlations for the correlated frailty model. 
	\\
	(b) 	Higher order expressions of \eqref{Formel2-11} are similar but more complicated.
		The role of higher order exponent measures is discussed in Section~\ref{sec:semiparametric_dependence_models}.
\end{Remark}
\begin{proof}[Proof of Proposition~\ref{prop:bivariate_lambda}]
	For simplicity we may consider $I=\{1,2\}$. By (\ref{Formel2-11}) we have
	\begin{equation*}
		\lambda_I (\b t_I) =
		\frac{\partial}{\partial \b t_{I}} \log{\psi(\b t_I,\b 0)} = 
			\frac{\frac{\partial}{\partial t_{1}} \frac{\partial}{\partial t_{2}} \psi(\b t_I,\b 0) }{\psi(\b t_I,\b 0)} 
			- \biggl( \frac{\frac{\partial}{\partial t_{1}} \psi(\b t_I,\b 0) }{\psi(\b t_I,\b 0)} \biggr) 
				\biggl(\frac{\frac{\partial}{\partial t_{2}} \psi(\b t_I,\b 0) }{\psi(\b t_I,\b 0)} \biggr) .
	\end{equation*}
	Combining (\ref{Formel2-3}) and (\ref{2p4}), 
	we see that the first term corresponds to the expected value $E_{Q_{(\b t_I, \b 0)}}(W_1W_2)$ 
	whereas the second term is just the product $E_{Q_{(\b t_I, \b 0)}}(W_1)E_{Q_{(\b t_I,\b 0)}}(W_2)$;
	see also (\ref{Formel2-6}).
\end{proof}

In life science analysis two risk components can be modeled as a minimum of two survival variables:
For example, we could think of individuals who are exposed to two different lethal risk factors.
Then the survival times are the first occurrences of one of the competing events.
This concept goes well with the correlated frailty model:
\begin{Lemma} \label{lem:min_frailty}
	Let $\b W$ and $\b W'$ be two $d$-dimensional frailty variables 
	with correlated frailty models
	\begin{align*}
		\b X_{\b W} = \left( \frac{Y_1}{W_1}, \dots, \frac{Y_d}{W_d} \right) 
		\text{ and }
		\b X_{\b W'} = \left( \frac{Y'_1}{W'_1}, \dots, \frac{Y'_d}{W'_d} \right)
	\end{align*}
	for i.i.d. standard exponentially distributed $Y_1,\dots,Y_d,Y'_1,\dots Y'_d$.
	Let $S_{\b W}$ and $S_{\b W'}$ denote the survival functions of $\b X_{\b W}$ and $\b X_{\b W'}$, respectively.
	\\
	(a) We have equality in distribution of 
	  \begin{align*}
	    \b X_{\b W + \b W'}:= \left( \frac{Y_1}{W_1 + W'_1}, \dots, \frac{Y_d}{W_d + W'_d} \right) 
	      \stackrel{\mathfrak{D}}{=} \min({\b X}_{\b W}, {\b X}_{\b W'})	  
	  \end{align*}
	  Here $\min$ denotes the component-wise minimum operation.
	  Thus, the class of correlated frailty models is closed w.r.t. the minimum operation.
	\\
	(b) The following conditions (i)-(ii) are equivalent.
	\begin{itemize}
		\item[(i)]	The correlated frailty model 
				$\b X_{\b W + \b W'}$ has the survival function $S_{\b W} S_{\b W'}$.
		\item[(ii)]	$\b W$ and $\b W'$ are independent.
	\end{itemize}
\end{Lemma}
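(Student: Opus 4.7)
The plan is to translate both parts into identities involving Laplace transforms via Lemma~\ref{lemma:surv_eq_lapl}(a), which yields $S_{\b W}(\b t) = \psi_{\b W}(\b t) = E[\exp(-\langle \b t, \b W\rangle)]$ and analogous expressions for $\b W'$ and $\b W+\b W'$.

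For part~(a), I would compute the survival function of $\min(\b X_{\b W}, \b X_{\b W'})$ by conditioning on the joint frailty vector $(\b W,\b W')$. Since the $2d$ exponential variables $(Y_i)$ and $(Y'_i)$ are i.i.d.\ standard exponential and independent of $(\b W,\b W')$, the conditional survival probability factorizes into
\begin{equation*}
P(\min(\b X_{\b W}, \b X_{\b W'}) > \b t \mid \b W, \b W') = \prod_{i=1}^d e^{-t_i W_i}\, e^{-t_i W'_i} = \exp(-\langle \b t, \b W+\b W'\rangle).
\end{equation*}
Taking expectations and applying Lemma~\ref{lemma:surv_eq_lapl}(a) to the frailty vector $\b W+\b W'$ shows this equals $S_{\b X_{\b W+\b W'}}(\b t)$. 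Equality of survival functions on $[0,\infty)^d$ delivers the distributional identity, and closure of the correlated frailty class under componentwise minima is then immediate.

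For part~(b), the implication (ii)~$\Rightarrow$~(i) is direct: independence of $\b W$ and $\b W'$ factorizes
\begin{equation*}
E[\exp(-\langle \b t, \b W+\b W'\rangle)] = \psi_{\b W}(\b t)\,\psi_{\b W'}(\b t) = S_{\b W}(\b t)\,S_{\b W'}(\b t),
\end{equation*}
which by part~(a) is the survival function of $\b X_{\b W+\b W'}$. For the converse, I would introduce the joint Laplace transform $\phi(\b s,\b t) := E[\exp(-\langle \b s,\b W\rangle - \langle \b t,\b W'\rangle)]$ on $[0,\infty)^{2d}$, recast (i) as the diagonal identity $\phi(\b t,\b t) = \phi(\b t,\b 0)\,\phi(\b 0,\b t)$ for all $\b t \in [0,\infty)^d$, and observe that (ii) amounts to the full factorization $\phi(\b s,\b t) = \phi(\b s,\b 0)\,\phi(\b 0,\b t)$, which by uniqueness of Laplace transforms for distributions on $[0,\infty)^{2d}$ is in turn equivalent to independence of $\b W$ and $\b W'$.

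The hard part will be upgrading the diagonal identity to the full factorization, since a priori the equality in (i) is only constrained on a $d$-dimensional submanifold of the $2d$-dimensional domain of $\phi$. I expect the argument will need to exploit real-analyticity of $\phi$ on $(0,\infty)^{2d}$ together with structural features of the frailty Laplace transform—for instance, by differentiating the diagonal identity at $\b t=\b 0$ to generate a family of cross-moment identities linking $\b W$ and $\b W'$, or by first reducing via part~(a) to an equality in distribution of $\b W+\b W'$ with an independent-copies convolution and then promoting this to an independence statement. This is where most of the work lies, and the rest of the equivalence follows routinely from the Laplace-transform dictionary supplied by Lemma~\ref{lemma:surv_eq_lapl}.
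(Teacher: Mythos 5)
Your treatment of part (a) and of the implication (ii) $\Rightarrow$ (i) is correct and coincides with the paper's argument: the paper's entire proof consists of the observation that the conditional survival function of $\min(\b X_{\b W},\b X_{\b W'})$ at $\b t$ given $(\b W,\b W')=(\b w,\b w')$ is $\exp(-\langle\b t,\b w+\b w'\rangle)$, followed by the one-line assertion that the independence of $\b W$ and $\b W'$ ``is equivalent to the product of Laplace transforms which corresponds to the products of survival functions.''

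For (i) $\Rightarrow$ (ii), however, you have put your finger on a genuine gap, and it is worse than ``hard'': none of the strategies you sketch can succeed, because the implication is false as condition (i) is literally stated. Condition (i) asserts only the diagonal identity $\phi(\b t,\b t)=\phi(\b t,\b 0)\,\phi(\b 0,\b t)$, i.e.\ that $\b W+\b W'$ is distributed as the convolution of the two marginal laws, and this does not force independence. A classical counterexample already works for $d=1$: let $(X,Y)$ have density $\tfrac14\bigl(1+xy(x^2-y^2)\bigr)$ on $[-1,1]^2$ and set $W=1+X$, $W'=1+Y$. Both variables are supported in $(0,2]$, hence are admissible frailties; they are dependent since the density does not factor; yet writing $xy(x^2-y^2)=x^3y-xy^3$ and $A_k(t)=\int_{-1}^1 x^ke^{-tx}\intd x$, the cross term contributes $A_3(t)A_1(t)-A_1(t)A_3(t)=0$, so $E[e^{-t(W+W')}]=E[e^{-tW}]\,E[e^{-tW'}]$ for all $t\geq 0$. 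Accordingly, differentiating the diagonal identity at $\b 0$ yields only symmetrized moment constraints (at second order merely $Cov(W_i,W_i')=0$), and real-analyticity of $\phi$ on $(0,\infty)^{2d}$ cannot recover $\phi$ off a $d$-dimensional submanifold. The paper's own one-line proof silently identifies the diagonal identity with the factorization of the \emph{joint} Laplace transform on $[0,\infty)^{2d}$ (which is indeed equivalent to independence); the lemma only becomes correct if (i) is strengthened to that full factorization, equivalently to a statement about the joint law of $(\b X_{\b W},\b X_{\b W'})$ rather than about the law of $\b X_{\b W+\b W'}$ alone. So your instinct was right, but the honest conclusion is that the step cannot be filled in, not that it awaits a cleverer argument.
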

The proof is obvious. Note, that the conditional survival function of $\min({\b X}_{\b W}, {\b X}_{\b W'})$
at $\b t$ given $(\b W, \b W') = (\b w, \b w')$ is $\exp(- \! < \! \b t, \b w + \b w' \! > \!)$.
The independence of $\b W$ and $\b W'$ is equivalent to the product of 
Laplace transforms which corresponds to the products of survival functions; see Lemma~\ref{lemma:surv_eq_lapl}.
\begin{Example}\label{Bsp:doppelt}
	Let $(Z_1,\dots,Z_d)$ have a multivariate normal distribution 
	with mean vector zero and non-singular covariance matrix $\bs \Sigma$.
	\\
	(a) 	Correlated frailty models with the following frailty variables $\b W$ 
		are unique\-ly determined by the collection of bivariate distributions $(Z_i)_{i\in I}$, $|I| = 2$,
		i.e. by the covariances $Cov(Z_i,Z_j), i \leq j$:
		\begin{enumerate}[(i)]
			\item\label{item:chi2_frailty} Frailty variables with $\chi^2$-distributed marginals: 
			$(W_i)_{i \in I_0} = (Z_i^2)_{i \in I_0}$.
			\item\label{item:log-normal_frailty} Frailty variables with log-normal marginals: $(W_i)_{i \in I_0} = (\exp(Z_i))_{i \in I_0}$.
		\end{enumerate}
	(b) 	The correlated frailty vector $(X_1,\dots,X_d)$ given by \eqref{item:chi2_frailty} or \eqref{item:log-normal_frailty} has independent components
		iff $\bs \Sigma$ has diagonal form.\\
\end{Example}
\noindent The proofs of this example are deferred to \ref{sec:proofs}.

In contrast to Example~\ref{Bsp:doppelt} there exist correlated frailty models with pairwise independent components of $\b X$ but higher order dependence.
For instance let $W_1,W_2,W_3$ be pairwise independent but not totally independent.
Then $S$ has the form $S = \exp \left( - \sum_{i=1}^3 \Lambda_i - \Lambda_{ \{1,2,3\}} \right)$ 
with trivial exponent measures $\Lambda_J = 0$ for $|J| = 2$.
See Section~\ref{sec:higher_dep} for a related discussion which systematically characterises copulas 
having only dependencies of the highest order.

Part (a) of the following example continues Example~\ref{Bsp:doppelt}(a)(i) for the special case $d=3$:
\begin{Example} \label{Bsp:CFM_ZiQuadrat}
(a)	Let $(Z_1,Z_2,Z_3)$ follow a multivariate normal distribution with mean 
	$E (Z_i) = 0$ for $i \in \{1,2,3\}$ and covariance matrix
	\begin{align*}
		\bs \Sigma = 	\begin{pmatrix} 
					\sigma_1^2 & \sigma_{12} & \sigma_{13} 
					\\ \sigma_{12} & \sigma_2^2 & \sigma_{23} 
					\\ \sigma_{13} & \sigma_{23} & \sigma_{3}^2 
				\end{pmatrix}, \quad \sigma_i > 0 \text{ for each } i =1,2,3,
	\end{align*}
	and let $\varrho_{ij} := \frac {\sigma_{ij}} {\sigma_i \sigma_j}$ denote the correlation coefficient of $Z_i$ and $Z_j$.
	Consider the correlated frailty model \eqref{eq:2p2} with frailty variables $W_i = Z_i^2$, $i \in \{1,2,3\}$.
	
	Explicit formulas and their derivation for all dependence parts of the survival function, i.e. for $S_{\{1,2,3\}}, S_{\{i,j\}}, i < j$,
	as well as for the 2-dimensional hazard densities $\lambda_{\{i,j\}}, i < j$, can be found in \ref{sec:formulas_chi_square}.
	There it is seen that the survival copula $S \circ (S_1^{ -1},S_2^{ -1},S_3^{ -1})$ only depends on the bivariate parameters $\varrho_{ij}$.
	This is no surprise as the vector of frailty variables $\b W$ is completely determined by its pairwise covariances.
	
	However, this rather simple dependence structure of $\b W$ involves a more complicated dependence structure for $\b X$:
	the dependence parts of second as well as of third order are non-trivial.
	The discussion of the bivariate version of this example is continued in Example~\ref{ex:formulas_and_graphics}.
\\
	(b) 	Further (mostly bivariate and rather particular) examples for correlated frailty models based on different distributions have been studied and applied. 
		An overview is given in Chapter~5 of \cite{Wienke11}.
		Chapter~6 of his book shows that the copulas of shared and correlated frailty models based on particular 
		distributions can be derived without frailty models. A generalization of the approach is illustrated.
\end{Example}
%
%
%

%
%
%
\section{Correlated frailty models with sum-infinitely divisible scale distributions} 
\label{sec:frailty_id}
In this section we derive exponential dependence measures of correlated frailty models from the well known L\'evy-Khintchine formula which is first summarized for Laplace transforms.
We refer to \cite{Petrov95}  and \cite{Meerschaert01} 
for the notion of sum-infinite divisibility of $\mathfrak{L}(\b W)$ 
and for the L\'evy-Khintchine formula for Fourier transforms.
For $\b W$ with values in $\Rdp$ it is known that the Gaussian part vanishes, 
that its L\'evy measure is concentrated on $\Rdpo0$
and that the integral of $\| \b x \| \b 1\{ 0 < \| \b x \| \leq 1\}$ with respect to the L\'evy measure is bounded.
Here $\| \cdot \|$ denotes the Euclidean norm.
This yields the following 
\begin{Corollary} \label{cor:Levy_Khin}
Let $Q_\b 0 = \mathfrak{L}(\b W)$ be sum-infinitely divisible and concentrated on $[0,\infty)^d$ 
with L\'evy-Khintchine triplet $(\b b, \b 0, \eta_\b 0)$
where $\b b = (b_1,\dots,b_d) \in \Rdp$ and $\eta_{\b 0}$ is a L\'evy measure concentrated on $\Rdpo0$.
Then $Q_\b 0$ is concentrated on $(0,\infty)^d$ 
iff for each coordinate $i=1,\dots,d$ either $b_i > 0$ holds 
or the univariate L\'evy measure of $W_i$ is unbounded.
\end{Corollary}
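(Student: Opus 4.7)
The plan is to reduce the multivariate claim to $d$ independent univariate statements about subordinators, and then to characterize the atom at $0$ of a subordinator via its Laplace exponent.

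First I would observe that $Q_{\b 0}$ is concentrated on $(0,\infty)^d$ if and only if $P(W_i > 0) = 1$ for every $i \in I_0$. Indeed, $\{ \b W \notin (0,\infty)^d\} = \bigcup_{i=1}^d \{W_i = 0\}$, so by finite subadditivity this set has probability zero if and only if each $\{W_i = 0\}$ does. Thus the task reduces to characterizing, for each marginal subordinator $W_i$, when $P(W_i = 0) = 0$. For this I would identify the univariate L\'evy-Khintchine triplet of $W_i$ as $(b_i, 0, \eta_i)$, where $\eta_i$ is the image of $\eta_{\b 0}$ under the $i$-th coordinate projection restricted to $\{x_i > 0\}$; in particular, $\eta_i$ being bounded is equivalent to $\eta_{\b 0}(\{x \in \Rdpo0 : x_i > 0\}) < \infty$.

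Next I would use the univariate L\'evy-Khintchine representation of the Laplace transform of $W_i$,
\begin{equation*}
E\bigl(e^{-t W_i}\bigr) = \exp\!\left( - t\, b_i - \int_{(0,\infty)} \bigl(1 - e^{-tx}\bigr)\, \intd \eta_i(x) \right), \qquad t \geq 0,
\end{equation*}
and let $t \to \infty$. By dominated convergence $e^{-tW_i} \to \b 1_{\{W_i = 0\}}$, so the left-hand side tends to $P(W_i = 0)$. On the right, $t b_i \to \infty$ exactly when $b_i > 0$, and by monotone convergence $\int (1-e^{-tx})\, \intd \eta_i(x) \nearrow \eta_i((0,\infty))$, which diverges precisely when $\eta_i$ is unbounded. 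Hence $P(W_i = 0) = 0$ iff $b_i > 0$ or $\eta_i((0,\infty)) = \infty$, which yields the claimed equivalence after combining across $i = 1, \dots, d$.

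The only delicate step is the identification of the univariate L\'evy triplet from the multivariate one; this is standard since projections of sum-infinitely divisible vectors are sum-infinitely divisible and the projection map is linear, so the drift passes through as $b_i$ and the jump part pushes forward to $\eta_i$ on $(0,\infty)$ (atoms of $\eta_{\b 0}$ on the hyperplane $\{x_i = 0\}$ do not contribute to $W_i$). With that bookkeeping in place the rest of the argument is a short monotone/dominated convergence computation on the Laplace transform, and no further obstacle remains.
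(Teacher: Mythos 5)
Your argument is correct. The paper itself disposes of this corollary in one line by citing Hartman and Wintner (1942) for the key implication (namely that, when $b_i=0$, an unbounded univariate L\'evy measure forces $W_i>0$ a.s.), whereas you supply a complete, self-contained proof. Your reduction to the coordinates via $\{\b W\notin(0,\infty)^d\}=\bigcup_i\{W_i=0\}$ is exactly the implicit first step of the paper's argument, and your identification of the marginal triplet $(b_i,0,\eta_i)$ is legitimate here precisely because, for distributions on $[0,\infty)^d$, the Laplace--Khintchine representation (the paper's equation for $\psi_{Q_{\b 0}}$) needs no compensating term, so setting all but the $i$-th argument to zero immediately yields the univariate exponent with drift $b_i$ and jump measure equal to the pushforward of $\eta_{\b 0}$ restricted to $\{x_i>0\}$. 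The limit $t\to\infty$ in the Laplace transform, with dominated convergence on the left giving $P(W_i=0)$ and monotone convergence on the right giving $\eta_i((0,\infty))$, settles both directions at once and even identifies the atom as $e^{-b_i\cdot\infty-\eta_i((0,\infty))}$ in the compound Poisson case. What your route buys is transparency and independence from the cited reference; what the paper's route buys is brevity. No gap.
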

The proof follows from \cite{Hartman42}.
In case $b_i=0$ they proved that $W_i$ is positive whenever its univariate L\'evy measure is unbounded.
Assume henceforth that $Q_\b 0((0,\infty)^d) = 1$.
In this case, we get a well known extension of the related representation for Fourier transforms to Laplace transforms;
see for instance \cite{Janssen85}.
Lemma~\ref{lemma:surv_eq_lapl}(a) then builds the following connection to the survival function of the correlated frailty model $\b X$ with frailty variable $\b W$:
For all $\b t \in [0,\infty)^d$ we have
\begin{align} \label{eq:Laplace_Khintchine}
	S(\b t) = \psi_{Q_\b 0}(\b t) = \exp \Big( -<\b t,\b b> + \int_{\Rdpo0} \left( e^{-<\b t,\b x>} - 1 \right)
		\tn d \eta_{\b 0}(\b x) \Big).
\end{align}
This particular correlated frailty model is studied throughout this section.

In a remark in~\ref{sec:proofs} it is shown
that, for each correlated frailty model $\b X$ with survival function $S$,
the sum-infinite divisibility of $\mathfrak{L}(\b W)$ 
is equivalent to the minimum-infinite divisibility of $S$.
Thus, we have the following relationship where $\mu$ is the exponent measure of the minimum-infinitely divisible $S$ 
(see Section~\ref{sec:exponent_measures_of_dependence} or \cite{Resnick87}, Section~5.3, for this particular non-copula case):
For all $\b t \in [0, \infty)^d$,
\begin{align*}
 \mu \left((\b t, \bs \infty]^{\mathcal C} \right) \; = \; < \b t, \b b> - \int_{\Rdpo0} \left( e^{-<\b t,\b x>} - 1 \right)
		\tn d \eta_{\b 0}(\b x).
\end{align*}

With the help of equation~\eqref{eq:Laplace_Khintchine} we now express the $|I|$-dimensional hazard rates 
	$\lambda_I, I \subset I_0$,  
in terms of a family of L\'evy measures 
	$\eta_{\b t}$ 
given by
\begin{align*}
	\frac{\tn d \eta_{\b t}}{\tn d \eta_\b 0}(\b x) := \exp(- < \b x, \b t >),\ 
	\b t \in [0,\infty)^d.
\end{align*}
Notice the similarity of this Radon-Nikodym density to the definition of the exponential family $Q_{\b t}$ in~\eqref{Formel2-3}.
\begin{Lemma} \label{lem:lambda_id_w}
	  For all $i \in I \subset I_0$ such that $|I| \geq 2$ we have, for $\b t_I \in (0,\infty)^{|I|}$,
	\begin{align*}
	 	\lambda_i(t_i) = b_i + \int_{\Rdpo0} x_i \tn d \eta_{(t_i,\b 0)}(\b x) \;\; \text{and} \;\;
		 \lambda_I(\b t_I) = \int_{\Rdpo0} \prod_{i \in I} x_i \tn d \eta_{(\b t_I, \b 0)}(\b x) .
	\end{align*}
\end{Lemma}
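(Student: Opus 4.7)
My plan is to derive both formulas directly from the Laplace--Khintchine representation~\eqref{eq:Laplace_Khintchine} by applying the identity~\eqref{Formel2-11}, which says $\lambda_I(\b t_I) = (-1)^{|I|}\frac{\partial}{\partial \b t_I} \log \psi(\b t_I,\b 0)$, and is available since a correlated frailty model with Laplace transform $\psi$ has $S=\psi$ by Lemma~\ref{lemma:surv_eq_lapl}(a). Taking logarithms of~\eqref{eq:Laplace_Khintchine} gives
\begin{equation*}
\log \psi(\b t) = -\langle \b t,\b b\rangle + \int_{\Rdpo0}\bigl( e^{-\langle \b t,\b x\rangle} -1\bigr)\,\tn d\eta_{\b 0}(\b x),
\end{equation*}
so after restricting to $\b t=(\b t_I,\b 0)$ the integrand becomes $e^{-\langle \b t_I,\b x_I\rangle}-1$ and the linear drift reduces to $-\sum_{i\in I} b_i t_i$.

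For $|I|=1$, $I=\{i\}$, I would differentiate once in $t_i$. Formal differentiation under the integral sign produces
\begin{equation*}
\lambda_i(t_i) = -\frac{\partial}{\partial t_i}\log\psi(t_i\b e_i) = b_i + \int_{\Rdpo0} x_i\, e^{-x_i t_i}\,\tn d\eta_{\b 0}(\b x) = b_i + \int_{\Rdpo0} x_i\,\tn d\eta_{(t_i,\b 0)}(\b x),
\end{equation*}
where the last step uses the defining Radon--Nikodym density of $\eta_{\b t}$. For $|I|\ge 2$, the linear drift contributes nothing to a mixed derivative of order $\ge 2$, and the constant $-1$ inside the integral also drops out; differentiating the exponential yields $\prod_{i\in I}(-x_i)\,e^{-\langle\b t_I,\b x_I\rangle}$, so
\begin{equation*}
\lambda_I(\b t_I) = (-1)^{|I|}\int_{\Rdpo0}(-1)^{|I|}\prod_{i\in I} x_i\, e^{-\langle \b t_I,\b x_I\rangle}\,\tn d\eta_{\b 0}(\b x) = \int_{\Rdpo0}\prod_{i\in I} x_i \,\tn d\eta_{(\b t_I,\b 0)}(\b x),
\end{equation*}
which is the claim.

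The main obstacle is justifying differentiation under the integral sign, since the L\'evy measure $\eta_{\b 0}$ is typically infinite near the origin. I would argue by dominated convergence on two regimes: on $\{\|\b x\|\le 1\}$, a factor $x_i$ from the derivative combined with the fact that $|e^{-\langle \b t,\b x\rangle}-1|\le \langle \b t,\b x\rangle$ provides enough vanishing at the origin to invoke the L\'evy integrability condition $\int \|\b x\|\wedge 1\,\tn d\eta_{\b 0}<\infty$ (for higher order derivatives the product $\prod_{i\in I} x_i$ vanishes at even higher order); on $\{\|\b x\|>1\}$, since every $t_i>0$, the exponential $e^{-\langle \b t_I,\b x_I\rangle}$ decays fast enough to absorb the polynomial factors against the finite tail mass of $\eta_{\b 0}$. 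A suitable neighborhood of $\b t_I$ in $(0,\infty)^{|I|}$ provides a uniform bound, legitimizing repeated interchange of differentiation and integration. The remaining steps are routine chain-rule bookkeeping.
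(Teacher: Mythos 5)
Your proposal is correct and follows essentially the same route as the paper's proof: differentiating $(-1)^{|I|}\log\psi_{Q_{\b 0}}(\b t_I,\b 0)$ via the L\'evy--Khintchine representation and justifying the interchange of differentiation and integration by dominating the integrand $\prod_{i\in I} x_i\, e^{-<\b t_I,\b x_I>}$ by a multiple of $\min(1,\|\b x_I\|)$, which is $\eta_{\b 0}$-integrable. Your two-regime domination argument (near the origin via the L\'evy integrability condition, on the tail via the exponential decay for $\b t_I$ in a compact neighborhood) is exactly the content of the paper's bound.
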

\noindent The technical proofs of Lemmas~\ref{lem:lambda_id_w} and~\ref{lem:Lambda_id_w} 
are given in \ref{sec:proofs}.
\begin{Remark}
  It is surprising that the marginal hazard rates $\lambda_i$ can be expressed 
  by the first moment of $Q_{(t_i, \b 0)}^{\pi_i}$ (cf.~\eqref{Formel2-6}) as well as
  by the first moment of the L\'evy measure $\eta_{(t_i, \b 0)}^{\pi_i}$ whenever $b_i = 0$ holds:
 \begin{align*}
    \lambda_i(t_i) = \int_{(0,\infty)} x_i \tn d Q_{(t_i,\b 0)}^{\pi_i}(x_i) = \int_{(0,\infty)} x_i \tn d \eta_{(t_i, \b 0)}^{\pi_i}(x_i)
 \end{align*}
 with canonical projection $\pi_i = \pi_{\{i\}}$.
 Notice also that the higher dimensional quantities $\lambda_I$ of Lemma~\ref{lem:lambda_id_w}
 have a pleasant form whereas higher derivatives of $\log S$ are much more complicated 
 when expressed as functions of $Q_{(\b t_I, \b 0)}$.
\end{Remark}
Furthermore, Lemma~\ref{lem:lambda_id_w} enables us to derive the following relationship 
between dependence hazard measures of correlated frailty models and finite L\'evy measures of frailty variables $\b W$.
\begin{Lemma} \label{lem:Lambda_id_w}
  Assume $\eta_\b 0$ to be finite, i.e. $\eta_\b 0$ is the L\'evy measure of a compound Poisson distribution,
  and assume $b_i > 0$ for all $i \in I_0$.
 For all $i \in I \subset I_0, |I| \geq 2,$ we have
 \begin{align*}
   \Lambda_i(t_i) & = b_i t_i + \eta_{\b 0}(\Rdpo0) - \eta_{(t_i,\b 0)} (\Rdpo0) \\
   \text{and} \quad \Lambda_I(\b t_I) & = \sum_{J \subset I} (-1)^{|J|} \eta_{(\b t_J,\b 0)} (\Rdpo0) \quad \text{for all} \; \b t \in \Rdp.
 \end{align*}
 \end{Lemma}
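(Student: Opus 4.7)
My plan is to insert the explicit representations of the hazard densities $\lambda_i$ and $\lambda_I$ supplied by Lemma~\ref{lem:lambda_id_w} into the integral identity~\eqref{Formel2-11} for $\Lambda_I$, swap the order of integration by Fubini--Tonelli, evaluate the elementary $\b s$-integrals in closed form, and finally expand distributively to match the claimed inclusion--exclusion formula.

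For the univariate case I would start with $\Lambda_i(t_i) = \int_0^{t_i} \lambda_i(s) \intd s$ and substitute $\lambda_i(s) = b_i + \int_{\Rdpo0} x_i e^{-s x_i} \intd \eta_\b 0(\b x)$. Since the integrand is non-negative and $\eta_\b 0$ is finite, Tonelli applies; performing the inner $s$-integration gives $\int(1 - e^{-t_i x_i}) \intd \eta_\b 0(\b x)$, and recalling the defining density $\intd \eta_{(t_i,\b 0)}/\intd \eta_\b 0 = e^{-t_i x_i}$ this rewrites as $b_i t_i + \eta_\b 0(\Rdpo0) - \eta_{(t_i,\b 0)}(\Rdpo0)$, as required.

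For $|I| \geq 2$ I would use the same strategy. Substituting $\lambda_I(\b s_I) = \int_{\Rdpo0} \prod_{i \in I} x_i \, e^{-\langle(\b s_I,\b 0),\b x\rangle} \intd \eta_\b 0(\b x)$ and swapping orders, the iterated $\b s_I$-integral factorises as $\prod_{i\in I}(1 - e^{-t_i x_i})/x_i$; the apparent $1/x_i$ singularity is cancelled exactly by the prefactor $x_i$, leaving
\begin{align*}
  \Lambda_I(\b t_I) = \int_{\Rdpo0} \prod_{i\in I}(1 - e^{-t_i x_i}) \intd \eta_\b 0(\b x).
\end{align*}
Distributing the product into $\sum_{J\subset I}(-1)^{|J|} e^{-\langle(\b t_J,\b 0),\b x\rangle}$ and integrating term by term (legal since $\eta_\b 0$ is finite and each summand is bounded in modulus by $1$) yields $\sum_{J\subset I}(-1)^{|J|} \eta_{(\b t_J,\b 0)}(\Rdpo0)$, with the $J=\emptyset$ contribution reproducing $\eta_\b 0(\Rdpo0)$.

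The only delicate point is the interchange of integration near $\{x_i = 0\}$, where a naive order of operations would generate $1/x_i$ blow-ups. The saving observation is that Lemma~\ref{lem:lambda_id_w} already presents $\lambda_I$ with the factor $\prod_{i \in I} x_i$ in front, so the singularity is removable before invoking Fubini; afterwards, finiteness of $\eta_\b 0$ together with the uniform bound $\prod(1 - e^{-t_i x_i}) \leq 1$ reduces everything to the integral of a bounded function against a finite measure, and the remaining manipulations are routine.
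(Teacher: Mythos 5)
Your proposal is correct and follows essentially the same route as the paper's proof: insert the representations of $\lambda_i$ and $\lambda_I$ from Lemma~\ref{lem:lambda_id_w} into \eqref{Formel2-11}, apply Fubini--Tonelli, evaluate $\int_0^{t_i} x_i e^{-x_i s_i}\intd s_i = 1-e^{-x_i t_i}$, and expand the product by inclusion--exclusion before integrating term by term against the finite measure $\eta_{\b 0}$. Your additional remarks on why Tonelli applies and why finiteness of $\eta_{\b 0}$ is needed to split the integral are correct refinements of details the paper leaves implicit.
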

%
%
%
%
%
%
%
%
%
\section{Semiparametric dependence models} \label{sec:semiparametric_dependence_models} 
In this section the exponent dependence measures $\Lambda_I$ are parametrized 
in the copula manner by a semiparametric dependence model given by hazard quantities,
following the bivariate research of \cite{JR02}.
Subsequently, we denote all quantities belonging to functions in the copula case by an additional $0$-index.

Let $\Lambda_I$ be an $|I|$-dimensional signed measure of order $|I|\geq 2$ of some continuously distributed random vector $(X_1,\dots,X_d)$ on $\mathbb R^d$ with the representation
\begin{equation*}
	\Lambda_I (\b t_I) = \int_{-\bs \infty}^{\b t_I} \lambda_I(\b s_I) \intd \b s_I
\end{equation*}
where $\bs \infty=(\infty, \dots, \infty)$ is of dimension $|I|$.
Let $\lambda_i$ be the hazard functions of the univariate $\Lambda_i$.
Then $\Lambda_I$ can be rewritten as function of the marginal hazards
\begin{align} \label{eq:Lambda_I=int}
	\Lambda_I (\b t_I) = \int_{-\bs \infty}^{\b t_I} \gamma_I(\b s_I) \intd \big( \Lambda_{i_1} \otimes \dots \otimes \Lambda_{i_r} \big) (\b s_I)
\end{align}
for $I \subset I_0, \b s_I = (s_i)_{i \in I}$ and the $|I|$-dimensional dependence function 
\begin{align} \label{eq:gamma_I}
	\gamma_I (\b s_I) = \frac {\lambda_I(\b s_I)} {\prod_{i \in I} \lambda_{i}\left(s_{i}\right)}.
\end{align}
In this section we always assume that the densities $\lambda_I$ exist.
It is our aim to split the signed dependence measures $\Lambda_I$ up into marginal effects and a parametrized dependence function
$\gamma_{0,I} : [0,1]^{|I|} \to \mathbb R$, 
which may serve as a parameter of dependence.
For this purpose let 
\begin{align} \label{eq:uni_haz_measure}
	\frac {\intd \Lambda_0} {\intd \lambda \! \! \lambda|_{[0,1]}}(x) = \frac {\b 1_{[0,1]}(x)} {1-x}
\end{align}
be the univariate hazard measure $\Lambda_0$ of the uniform distribution on $[0,1]$. 
By~\eqref{eq:one_and_anoter_one} the univariate hazards can be transformed by 
\begin{align} \label{eq:transformation_formula_hazards}
	\mathfrak{L}(F_i|\Lambda_i) = \Lambda_0 \text{ and } \Lambda_i = \mathfrak{L}\left( F_i^{-1} \mids \Lambda_0 \right)
\end{align}
where %
$F_i$ is the distribution function of $X_i$.
\begin{Lemma}
	(a) Writing the survival function of $(F_1(X_1),\dots,F_d(X_d))$ in the form given by~\eqref{1p9},
	then the exponent measures $\Lambda_{0,I}, I \subset I_0$, given by~\eqref{eq:quasi_Def_von_Lambda_I}
	can be represented via
	\begin{align} \label{eq:Lamda_0I=int}
		\Lambda_{0,I} (\b v_I) = \int_{\b 0}^{\b v_I} \gamma_{0,I} (\b u_I) \intd \Lambda_0^I (\b u_I), \ \b v_I \in [0,1]^{|I|},
	\end{align}
	where
	\begin{align} \label{eq:gamma_0I}
		\gamma_{0,I} : [0,1]^I \to \mathbb R,\ \gamma_{0,I} (\b u_I) := \gamma_I\big( ( F^{-1}_{i} ( u_{i} ) )_{i \in I} \big),\ \b u_I = (u_{i})_{i \in I},
	\end{align}
	and $\Lambda_0^I$ denotes the $|I|$-fold product measure of~\eqref{eq:uni_haz_measure}.
	\\
	(b) Suppose that $S_0$ is the survival function of a copula with $\Lambda_{0,I} = \Lambda_0$ whenever $|I| = 1$ and 
	\begin{align*}
		S_0(\b v) = \exp \Big( \sum_{\emptyset \neq I \subset I_0} (-1)^{|I|} \Lambda_{0,I} (\b v_I) \Big),
	\end{align*}
	where $\Lambda_{0,I}$ is given by a function $\gamma_{0,I}$ via \eqref{eq:Lamda_0I=int}.
	If a $d$-dimensional random variable $(U_1,\dots,U_d)$ has the survival function $S_0$,
	then $\b V = (V_1, \dots, V_d) = \left( F^{-1}_1(U_1), \dots,  F^{-1}_d(U_d) \right)$
	has marginal hazards $\Lambda_i (t_i) = - \log (1-F_i(t_i) ) $.
	The survival function of $\b V$ has the representation 
	\begin{align*}
	  S = \exp \Big( \sum_{\emptyset \neq I \subset I_0} (-1)^{|I|} \Lambda_I \Big),
	\end{align*}
	with $\Lambda_I$ given by~\eqref{eq:Lambda_I=int} and
	$\gamma_I (\b s_I) = \gamma_{0,I} \big( ( F_{i} ( s_{i} ) )_{i \in I} \big), \b s = (s_i)_{i \in I}.$
\end{Lemma}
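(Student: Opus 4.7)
The plan is to treat both parts by first identifying the dependence-part survival functions via the uniqueness clause of Lemma~\ref{Lemma1-2}, then translating the hazard integrals using the one-dimensional transformation formula~\eqref{eq:transformation_formula_hazards}. For part (a), continuity of the $F_i$ gives
\[
	S_0^J(\b v_J) = \text{P}\big((F_i(X_i) > v_i)_{i \in J}\big) = S^J\big((F_i^{-1}(v_i))_{i \in J}\big) = \prod_{I \subset J} S_I\big((F_i^{-1}(v_i))_{i \in I}\big),
\]
so the unique factorisation in Lemma~\ref{Lemma1-2} forces $S_{0,I}(\b v_I) = S_I\big((F_i^{-1}(v_i))_{i \in I}\big)$, and then~\eqref{eq:quasi_Def_von_Lambda_I} yields $\Lambda_{0,I}(\b v_I) = \Lambda_I\big((F_i^{-1}(v_i))_{i \in I}\big)$.

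Inserting~\eqref{eq:Lambda_I=int}, I would next apply~\eqref{eq:transformation_formula_hazards} coordinate by coordinate: since the pushforward of $\Lambda_i$ restricted to $(-\infty, F_i^{-1}(v_i)]$ under $F_i$ equals $\Lambda_0$ restricted to $[0,v_i]$, a product-measure change of variables (routine Fubini) rewrites
\[
	\Lambda_{0,I}(\b v_I) = \int_{\b 0}^{\b v_I} \gamma_I\big((F_i^{-1}(u_i))_{i \in I}\big) \intd \Lambda_0^I(\b u_I),
\]
and the transformed integrand coincides with $\gamma_{0,I}(\b u_I)$ by~\eqref{eq:gamma_0I}. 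This is~\eqref{eq:Lamda_0I=int}.

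For part (b) I would reverse the argument. Continuity of $F_i$ together with $U_i \sim \text{Unif}[0,1]$ gives $V_i \sim F_i$, hence the stated marginal hazard. The joint survival is $S(\b t) = S_0\big((F_i(t_i))_{i \in I_0}\big)$, so expanding $\log S^J$ through the assumed representation of $S_0$ and invoking Lemma~\ref{Lemma1-2} once more identifies $\Lambda_I(\b t_I) = \Lambda_{0,I}\big((F_i(t_i))_{i \in I}\big)$. The inverse change of variables $u_i = F_i(s_i)$ applied to~\eqref{eq:Lamda_0I=int} then converts $\Lambda_0^I$ into $\otimes_{i \in I}\Lambda_i$ and reshapes the region $\big[\b 0, (F_i(t_i))_{i \in I}\big]$ into $(-\bs\infty, \b t_I]$, producing~\eqref{eq:Lambda_I=int} with integrand $\gamma_{0,I}\big((F_i(s_i))_{i \in I}\big) = \gamma_I(\b s_I)$.

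The main obstacle will be the careful bookkeeping of pushforward and pullback of the hazard measures $\Lambda_i$ under $F_i$ and $F_i^{-1}$ and its propagation to product measures; but~\eqref{eq:transformation_formula_hazards} handles the one-dimensional case cleanly, and iterating it via Fubini is straightforward even though $\gamma_I$ itself is not a product. Beyond this and the uniqueness clause of Lemma~\ref{Lemma1-2}, no deeper machinery seems necessary.
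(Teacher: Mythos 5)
Your proposal is correct and takes essentially the same route as the paper's (one-sentence) proof: both rest on the transformation formula for hazards~\eqref{eq:transformation_formula_hazards}, i.e.\ $\mathfrak{L}(F_i^{-1}\,\vert\,\Lambda_0)=\Lambda_i$, pushed to product measures, with the identification of the dependence parts secured by the uniqueness clause of Lemma~\ref{Lemma1-2}. Your write-up merely makes explicit the bookkeeping that the paper leaves implicit.
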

The proof is based on the transformation formula for hazards \eqref{eq:transformation_formula_hazards} since 
$\mathfrak{L} \left(F_i^{-1} \mids \Lambda_0 \right) = \Lambda_i$.
\begin{Remark}
	(a) The dependence parameter of interest \eqref{eq:gamma_I} is just 
	\begin{align*}
		\frac {\intd \Lambda_I} {\intd ( \otimes_{i\in I} \Lambda_i ) } = \gamma_I
	\end{align*}
	which is completely determined by $\gamma_{0,I}$ \eqref{eq:gamma_0I} via
	\begin{align*}
		\Lambda_I = \int \gamma_{0,I} \left( (F_i)_{i\in I} \right) \intd ( \otimes_{i\in I} \Lambda_i ).
	\end{align*}
	The dependence functions $\gamma_{0,I}$ form a generalization of $\gamma_{d,0}$ of \cite{JR02}
	which also may be utilized for the construction of dependence tests in semiparametric contexts.
	\\
	(b) In survival analysis often the componentwise minimum $\min(\b X,\b X')$ of independent vectors $\b X$ and $\b X'$ is considered.
	In this case the exponent measures of $\b X$ and $\b X'$ sum up.
	\\
	(c) Another useful transformation is given by 
	\begin{align*}
		\b Y = (Y_1,\dots,Y_d) := ( \Lambda_1(X_1), \dots, \Lambda_d(X_d) ),
	\end{align*}
	whose marginals $Y_i$ are standard exponentially distributed.
	In this case we have $\frac{\intd \Lambda_i^{(\b Y)}}{\intd \lambda \!\! \lambda} = \b 1_{(0,\infty)}$ for the marginal hazard measures $\Lambda_i^{(\b Y)}$ of $\b Y$ and
	\begin{align*}
		\gamma_I^{(\b Y)}(\b s_I) = \lambda_I^{(\b Y)}(\b s_I) = \gamma_{0,I} \left( \left( 1 - \exp (-s_i) \right)_{i\in I} \right).
	\end{align*}
	(d) In the bivariate case $d=2$ we recall the nice interpretation of $\lambda_{\{1,2\}}$ as local dependence parameter with
	\begin{align} \label{eq:gamma_12}
		\gamma_{\{1,2\}} (s_1,s_2) 
			= \frac {\lambda_{\{1,2\}}(s_1,s_2)} {\lambda_1(s_1) \lambda_2(s_2)} 
			= \frac {\intd \Lambda_{\{1,2\}}} {\intd ( \Lambda_1 \otimes \Lambda_2 )} (s_1,s_2).
	\end{align}
\end{Remark}
\begin{Example}\label{Bsp:The_parametrization}
	A special case of the parametrization \eqref{eq:gamma_12} is the so-called proportional hazard dependence given by the survival function
	\begin{align} \label{eq:414}
		S_{\text{prop}} (t_1,t_2) = \exp( - \Lambda_1 (t_1) - \Lambda_2(t_2) - \beta \Lambda_1(t_1) \Lambda_2(t_2) ),\ 0\leq \beta \leq 1,
	\end{align}
	which is the well known bivariate exponential survival function if the marginals are exponential distributions with $\Lambda_i(t_i) = t_i$ for $t_i > 0$.
	Note that
	\begin{align} \label{eq:415}
		\Lambda_{\{1,2\}}^{\text{prop}} = - \beta \ \Lambda_1 \otimes \Lambda_2
	\end{align}
	holds.
	That is, the local dependence parameter~\eqref{eq:gamma_12} is the constant function $-\beta$.
	Therefore, \eqref{eq:415} is called the proportional hazard rate dependence model 
	and it can be considered as dependence counterpart of Cox models;
	see~\cite{JR02} for dependence tests.
	A visualization of $\gamma_{\{1,2\}}$ for $\beta = 1$ is given in Figure~\ref{fig:gamma_0_fuer_prop}
	of the appendix.
\end{Example}
Subsequently, the proportional hazard rate dependence model~\eqref{eq:415} with measure $\Lambda_1 \otimes \Lambda_2$ serves as a benchmark
and we consider $\gamma_{\{1,2\}}$ of \eqref{eq:gamma_12} having the interpretation \eqref{eq:interpretation_lambda_12_local_dep_hazard} in mind.
For the visualization, below we present some bivariate contour plots of the normalized dependence function $\gamma_{0,\{1,2\}}$ on $[0,1]^2$;
see~\eqref{eq:gamma_0I}.
A selection of plots is presented in Figures~\ref{fig:gamma_0_fuer_clayton}--~\ref{fig:gamma_0_fuer_Zj2} whereas further plots (also for different parameters)
can be found in \ref{sec:graphics}.
\begin{Example} \label{ex:formulas_and_graphics}
	(a) (Binary correlated frailty model) Consider the correlated frailty model and a binary index set $I = \{i,j\}$, $|I| = 2 \leq d$.
	The dependence measure \eqref{eq:Lambda_I=int} is then given by the exponential family \eqref{Formel2-3} with
	\begin{align*}
		\gamma_I (t_i,t_j) = \frac {Cov_{Q_{(\b t_I, \b 0)}} (W_i,W_j)} {E_{Q_{(t_i, \b 0)}}(W_i) E_{Q_{(t_j, \b 0)}}(W_j)}.
	\end{align*}
	(b) The Clayton copula is an Archimedean copula with generator 
		$\varphi(s) = \frac 1 {1+s}$; see Example~\ref{example:archimedean_copulas}.
		It is also derived as a classical shared frailty model with standard exponentially distributed frailty variable $W_1 = W_2$. 
		We see that this is an exponential scale model on the diagonal set 
		$\{(x,x) : x \geq 0\} \subset \mathbb R^2$ 
		with 
		$\lambda_i(s) = \frac 1 {1+s}$, 
		$\lambda_{\{1,2\}}(t_1,t_2) = (1+t_1+t_2)^{-2}$,
		$\gamma_{\{1,2\}}(t_1,t_2) = \frac {(1+t_1) (1+t_2)} {(1+t_1+t_2)^2}$ 
		and
		\begin{align*}
			\gamma_{0,\{1,2\}} (u_1,u_2) 
			= \frac 
				{\big(\frac 1 {1-u_1}\big) \big(\frac 1 {1-u_2}\big)} 
				{\big(1 + \frac {u_1} {1-u_1} + \frac {u_2} {1-u_2} \big)^2}
			= \frac{(1-u_1)(1-u_2)}{(1-u_1 u_2)^2},\ 
				0 \leq u_1, u_2 < 1.
		\end{align*}
		A visualization of $\gamma_{0,\{1,2\}}$ is given in Figure~\ref{fig:gamma_0_fuer_clayton}.
		\begin{figure}[H]
		  \centering
		  \mbox{ 
		    \begin{subfigure}{0.4\textwidth}
			\includegraphics[width=\textwidth]{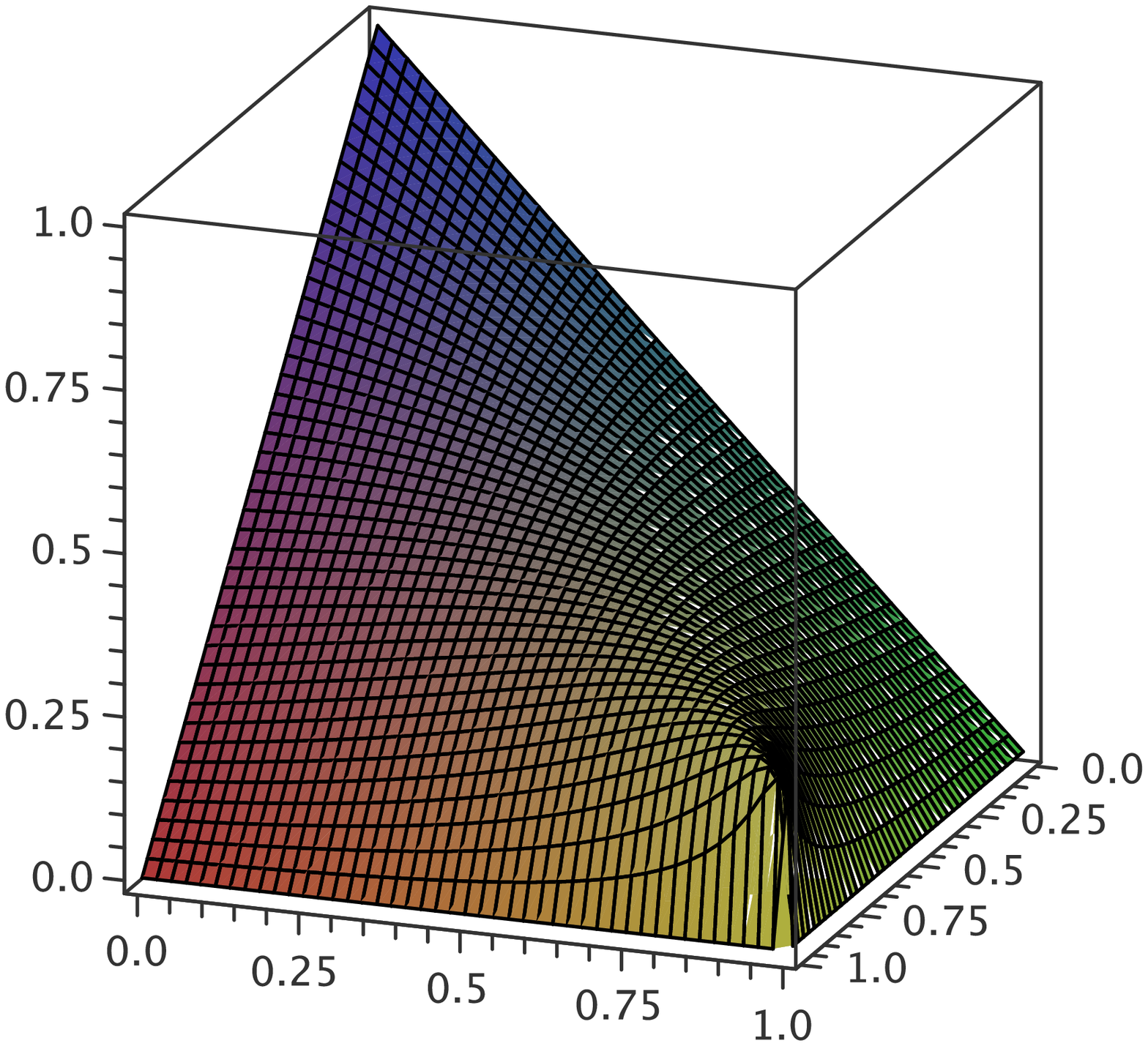}
		    \end{subfigure} 
		    \begin{subfigure}{0.4\textwidth}
			\includegraphics[width=\textwidth]{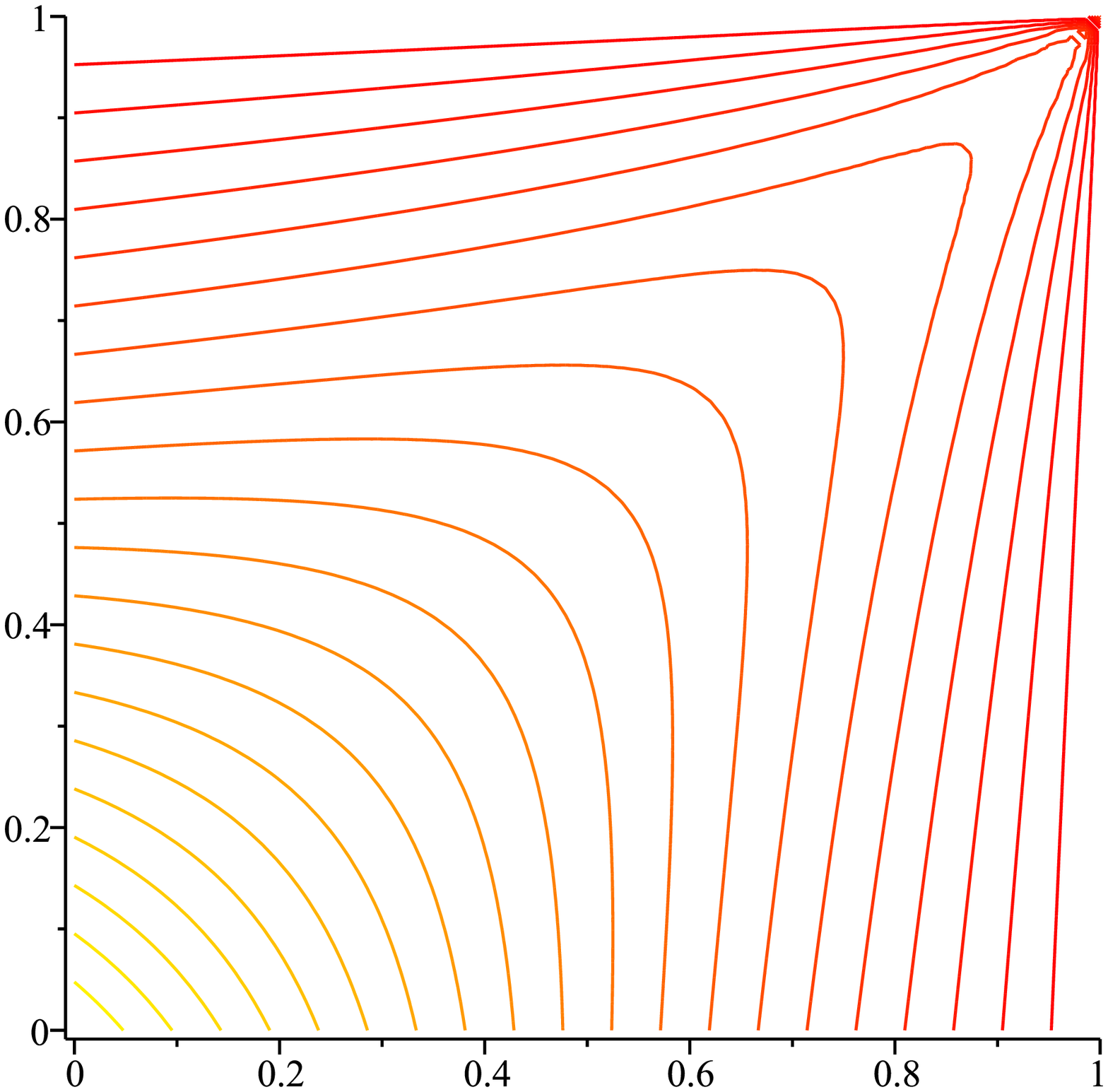}
		    \end{subfigure}
		  }
		  \caption{Plot and contour curves for the dependence hazard rate derivative $\gamma_{0,\{1,2\}}$ 
			of a bivariate shared frailty model based on a standard exponential distribution $\mathfrak{L}(W)$ over the range $[0,1)^2$.
			This choice of distributions results in a Clayton copula.}
		  \label{fig:gamma_0_fuer_clayton}
		\end{figure}
	(c) The Frank copula is an Archimedean copula with generator 
		\begin{align*}
			\varphi(s) = - \log\Big( \frac { \exp(-\theta s) - 1 } { \exp(-\theta) - 1 } \Big),\ \theta \in \mathbb R\setminus\{0\};
		\end{align*}
		see Example~\ref{example:archimedean_copulas}.
		For a bivariate survival function with the Frank copula as survival copula we obtain
		\begin{align*}
			\lambda_{\{1,2\}}\left(F^{-1}_1(u_1),F^{-1}_2(u_2) \right) = 		\theta^2\ 
														\frac {e^{\theta(u_1-1)} e^{\theta(u_2-1)}} {e^{-\theta} -1 } 
														\frac { \log(1+ g) - g } { [(1+g) \log(1+g)]^2 },
		\end{align*}
		where
		$g := g(u_1,u_2,\theta) := \left(e^{\theta(u_1-1)}-1\right)  \left(e^{\theta(u_2-1)}-1\right) / (e^{-\theta} -1).$
		By \eqref{eq:gamma_0I} and \eqref{eq:gamma_12} we have
		\begin{align*}
			\gamma_{0,\{1,2\}} (u_1,u_2) = \lambda_{\{1,2\}}\left(F^{-1}_1(u_1),F^{-1}_2(u_2) \right) (1-u_1) (1-u_2)
		\end{align*}
		as $\lambda_i\left(F^{-1}_i(u)\right) = \frac 1 {1-u}$ for univariate hazard rates $\lambda_i$ with corresponding distribution functions $F_i$.
		A visualization of $\gamma_{0,\{1,2\}}$ is given in Figure~\ref{fig:gamma_0_fuer_frank}.
	\begin{figure}[H]
	  \centering
	  \mbox{ 
		\begin{subfigure}{0.4\textwidth}
			\includegraphics[width=\textwidth]{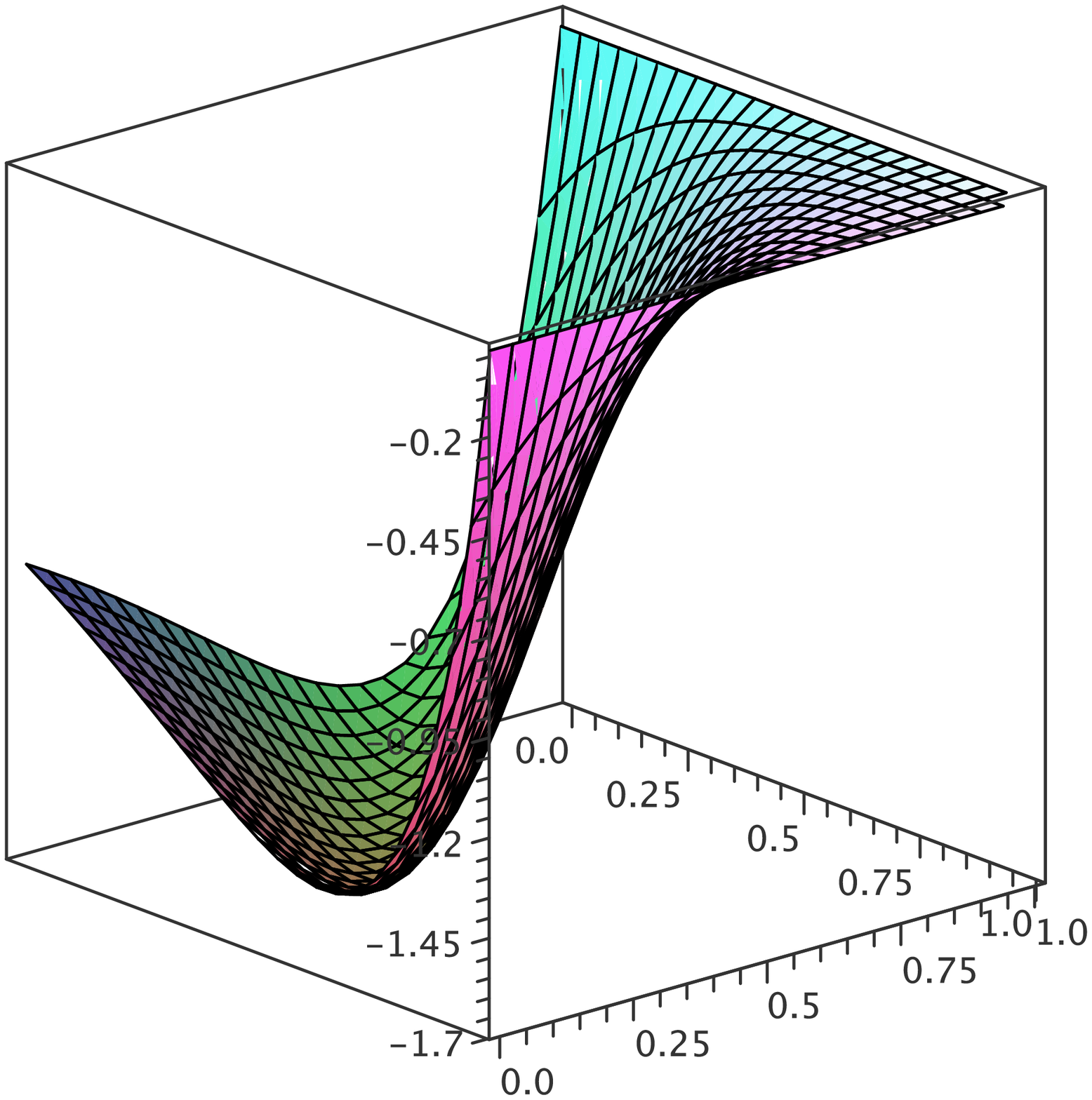}
			\caption{$\theta=-5$}
		\end{subfigure} 
		\begin{subfigure}{0.4\textwidth}
			\includegraphics[width=\textwidth]{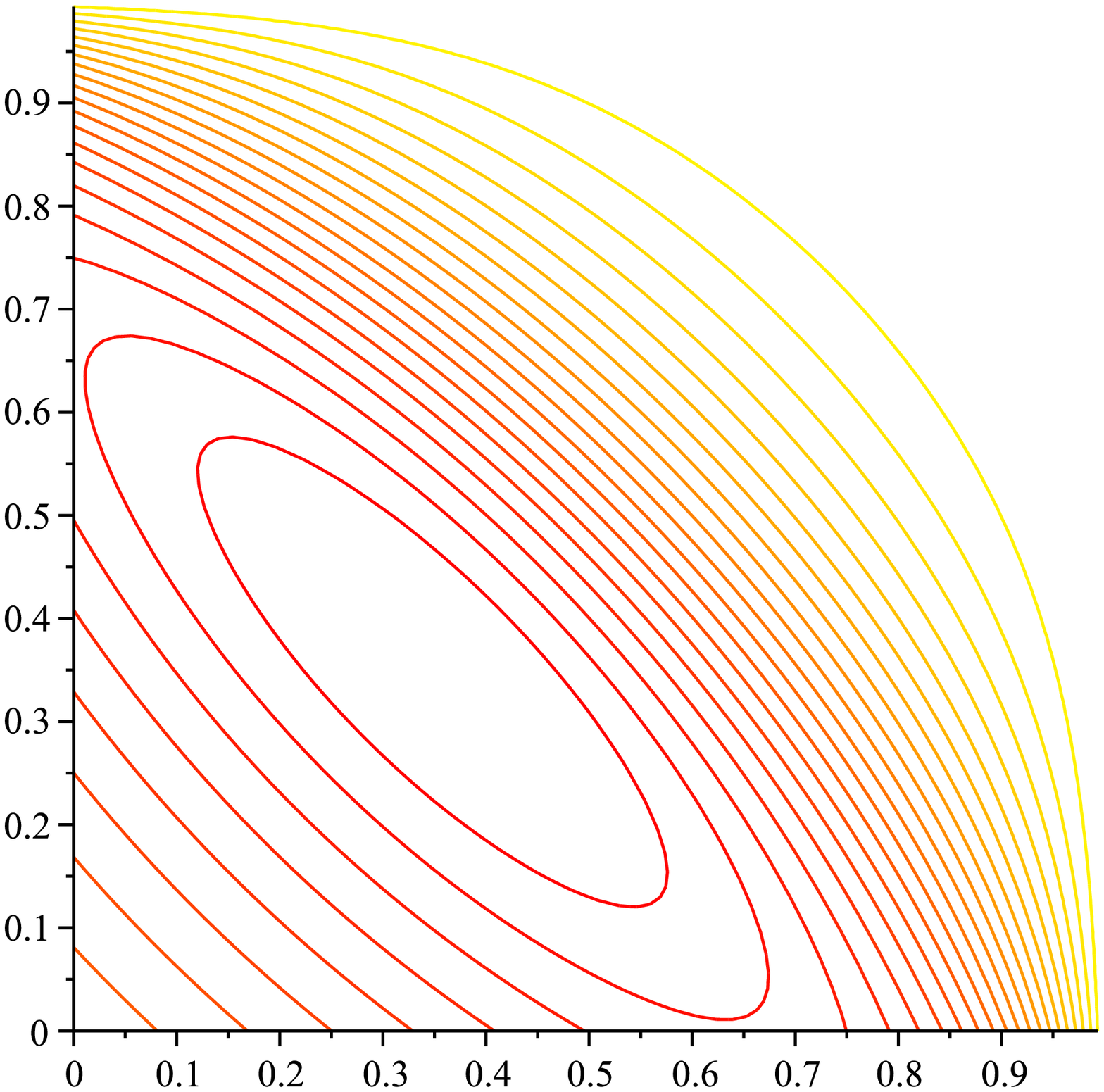}
			\caption{$\theta=-5$}
		\end{subfigure}
	  } \\
	  \mbox{ 
		\begin{subfigure}{0.4\textwidth}
			\includegraphics[width=\textwidth]{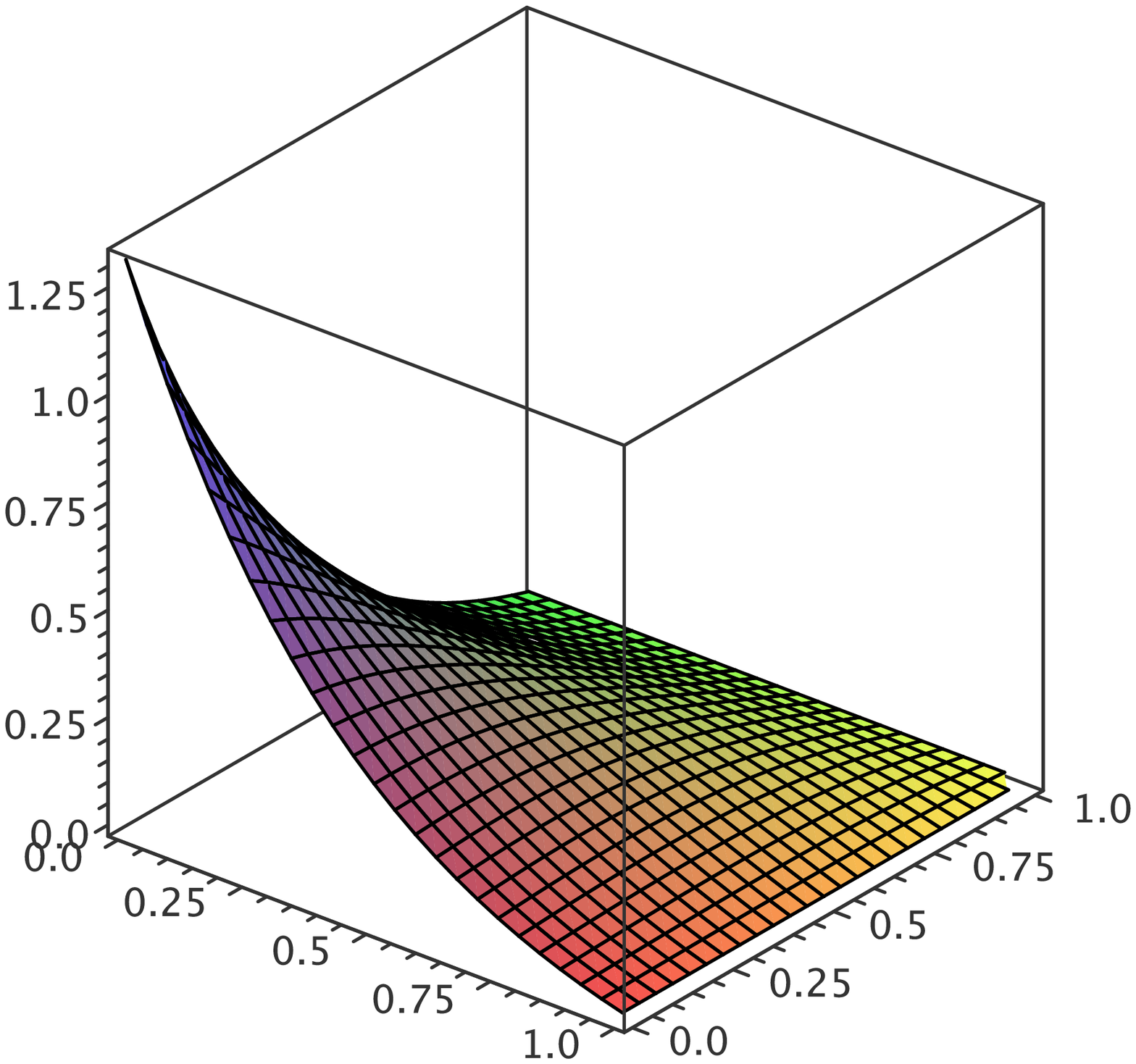}
			\caption{$\theta=2$}
		\end{subfigure} 
		\begin{subfigure}{0.4\textwidth}
			\includegraphics[width=\textwidth]{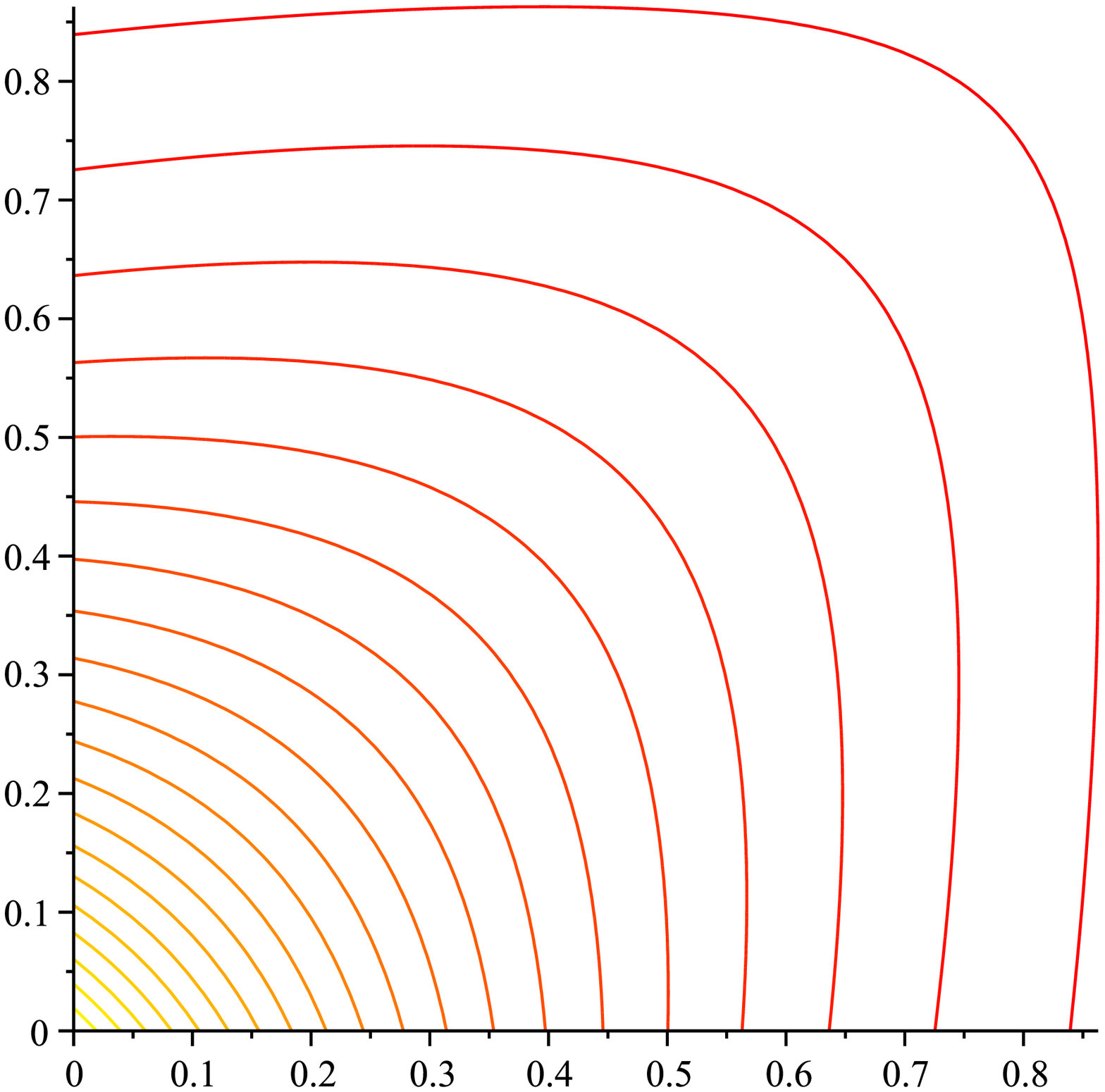}
			\caption{$\theta=2$}
		\end{subfigure}
	  }
	  \caption{Plots and contour curves for the dependence hazard rate derivative $\gamma_{0,\{i,j\}}$ of a model with Frank copula as survival colula over the range $[0,1]^2$, with different values of $\theta$.}
	  \label{fig:gamma_0_fuer_frank}
	\end{figure}
	(d) Consider a shared frailty model based on an inverse Gaussian distribution, i.e. $W_1 = W_2 = Z^{-2}$
	where $Z$ is normally distributed with $E(Z)=0$ and $Var(Z) = \frac{2}{\theta^2} > 0$.
	The Laplace transform of the frailty variable $W$ and the joint survival function of the shared frailty model are given by
		$\psi(t) = \exp( - \theta \sqrt{t})$ and 
		$S(\b t) = \exp( - \theta \sqrt{t_1 + t_2})$, respectively.
		We also see that
	\begin{align*}
		\gamma_{0,\{1,2\}} (u_1,u_2)  = \frac { \log(1-u_1) \log(1-u_2) } { \left\{ \left[\log(1-u_1)\right]^2 + \left[\log(1-u_2)\right]^2 \right\}^{3/2} }
	\end{align*}
	holds, so that this semiparametric dependence function is even invariant under different values of the parameter $\theta > 0$. 
	A visualization of $\gamma_{0,\{1,2\}}$ is given in Figure~\ref{fig:gamma_0_fuer_inverse_gauss}.
	\begin{figure}[H]
	  \centering
	  \mbox{ 
		\begin{subfigure}{0.4\textwidth}
			\includegraphics[width=\textwidth]{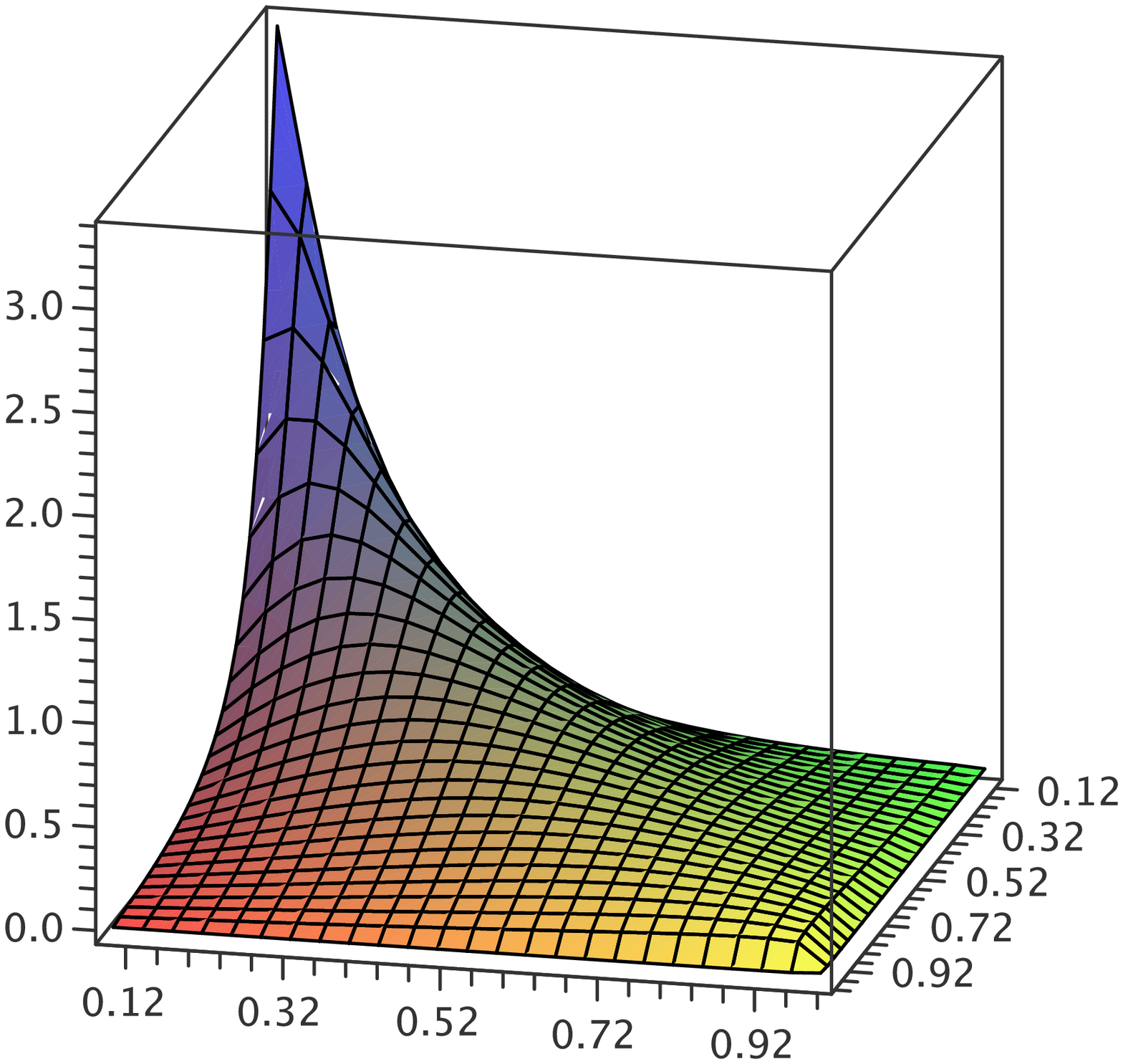}

		\end{subfigure} 
		\begin{subfigure}{0.4\textwidth}
			\includegraphics[width=\textwidth]{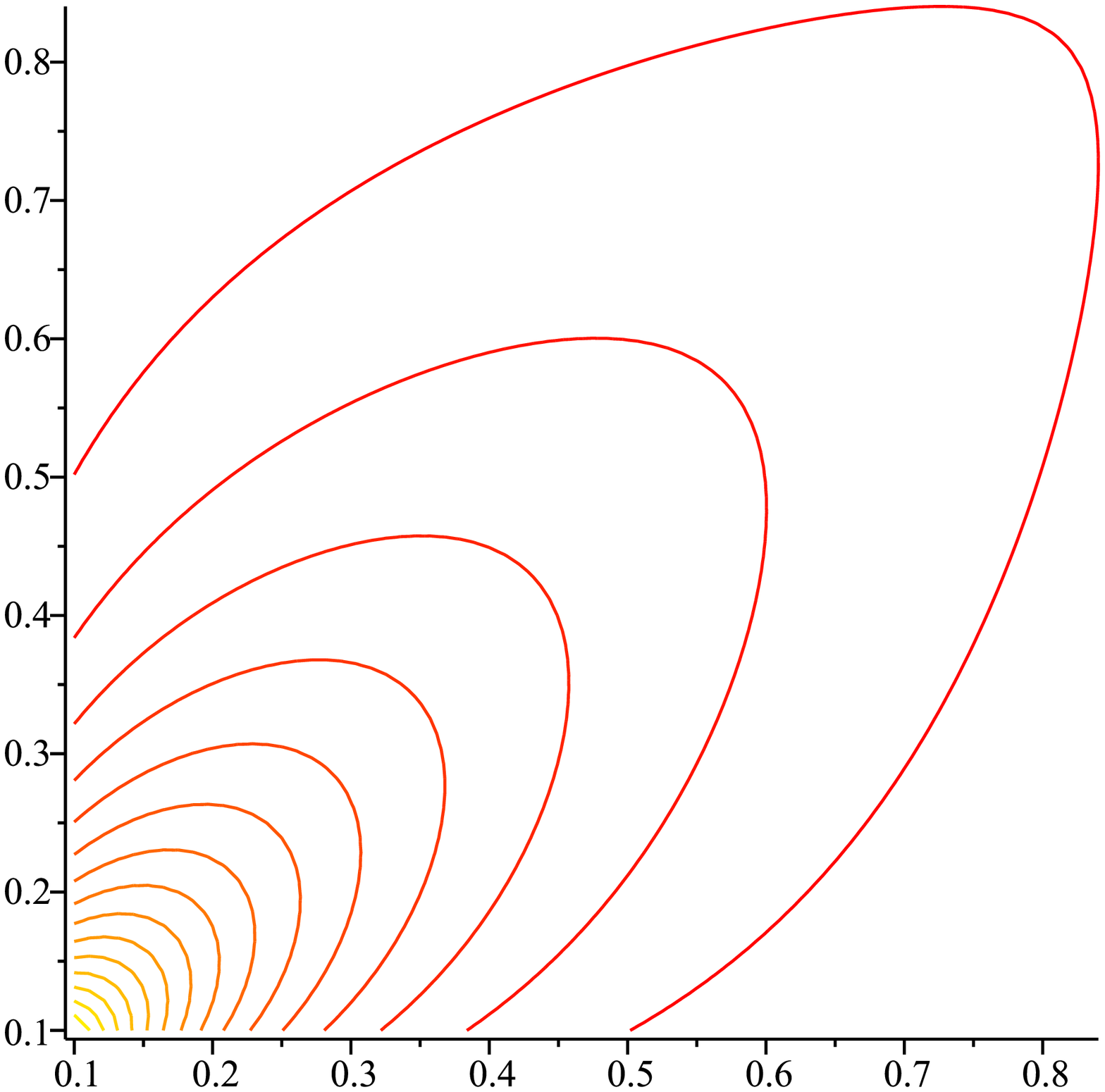}
		\end{subfigure}
	  }
	  \caption{Plot and contour curves for the dependence hazard rate derivative $\gamma_{0,\{1,2\}}$ 
			of a bivariate shared frailty model based on an inverse Gaussian distribution over the range $(\frac 1 {10}, 1]^2$.
			Take note of the pole in the origin $(u_1,u_2) = (0,0)$.}
	  \label{fig:gamma_0_fuer_inverse_gauss}
	\end{figure}
	(e) For the correlated frailty model based on a $\chi^2$-distribution as seen in Example~\ref{Bsp:CFM_ZiQuadrat} we have
	\begin{align*}
		\gamma_{0,\{i,j\}} (u_i,u_j)  = 2 \varrho_{ij}^2 \Big[ \frac {(1-u_i)(1-u_j)} {1-\varrho_{ij}^2 (2- u_i)(2-u_j)u_i u_j} \Big]^2.
	\end{align*}
	Figure~\ref{fig:gamma_0_fuer_Zj2} shows a plot and a contour curve for $\gamma_{0,\{i,j\}}$ with $\varrho_{ij}^2 = 0.9$. 
	Note, that $\gamma_{0,\{i,j\}} (0,0) = 2 \varrho_{ij}^2$.
	Comparing Figure~\ref{fig:gamma_0_fuer_Zj2} with Figure~\ref{fig:gamma_0_fuer_Zj2_2} in \ref{sec:graphics},
	we observe a high early dependence for all values of $\varrho_{ij}^2$ and an increased dependence along the diagonal 
	$\{(u_i,u_j)\in[0,1]^2 : u_i=u_j\}$ for higher values of $\varrho_{ij}^2$.
	\begin{figure}[H]
	  \centering
	  \mbox{ 
		\begin{subfigure}{0.4\textwidth}
			\includegraphics[width=\textwidth]{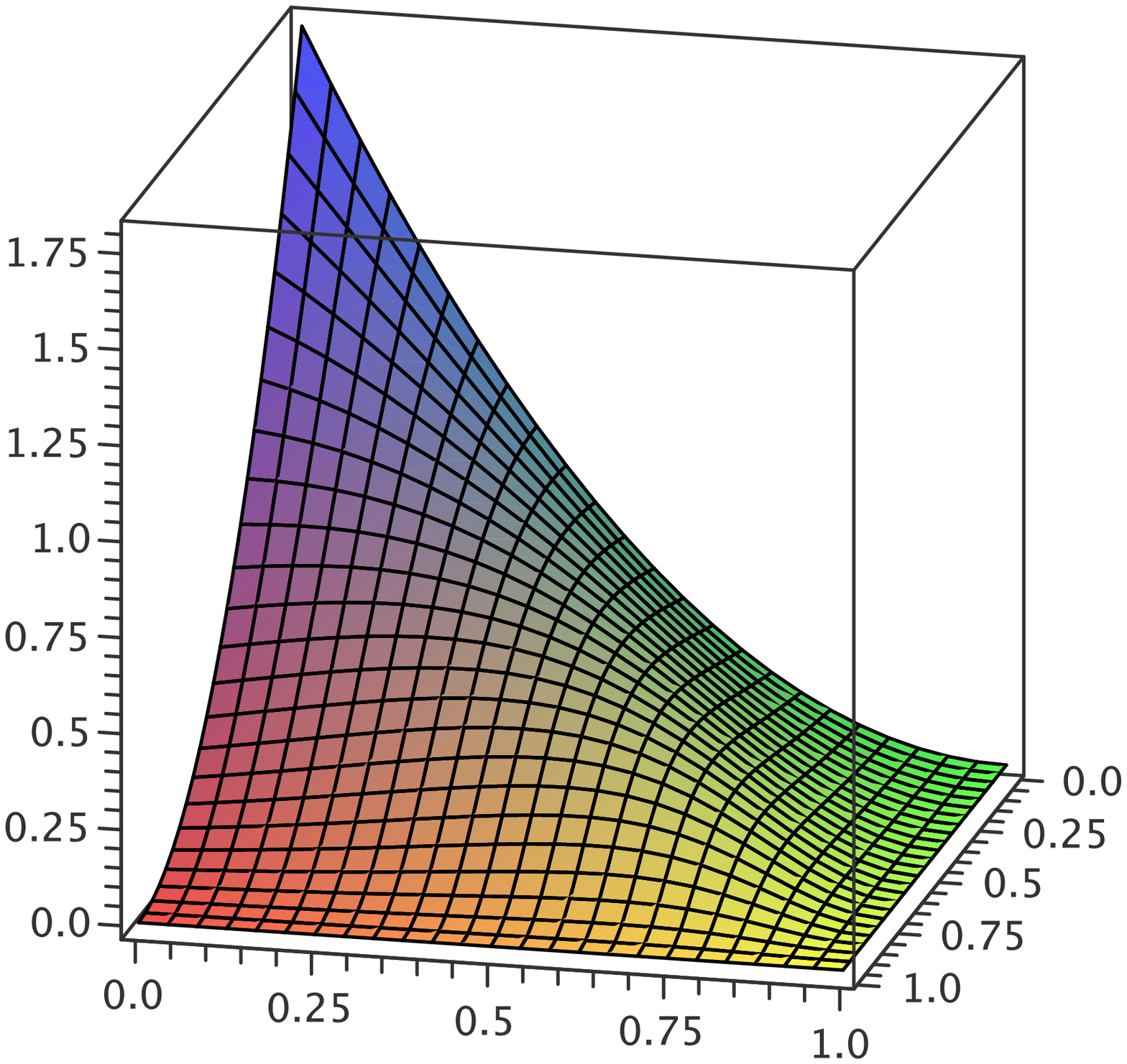}
			\caption{$\varrho_{ij}^2=0.9$}
			\label{fig_gamma:subfig5}
		\end{subfigure} 
		\begin{subfigure}{0.4\textwidth}
			\includegraphics[width=\textwidth]{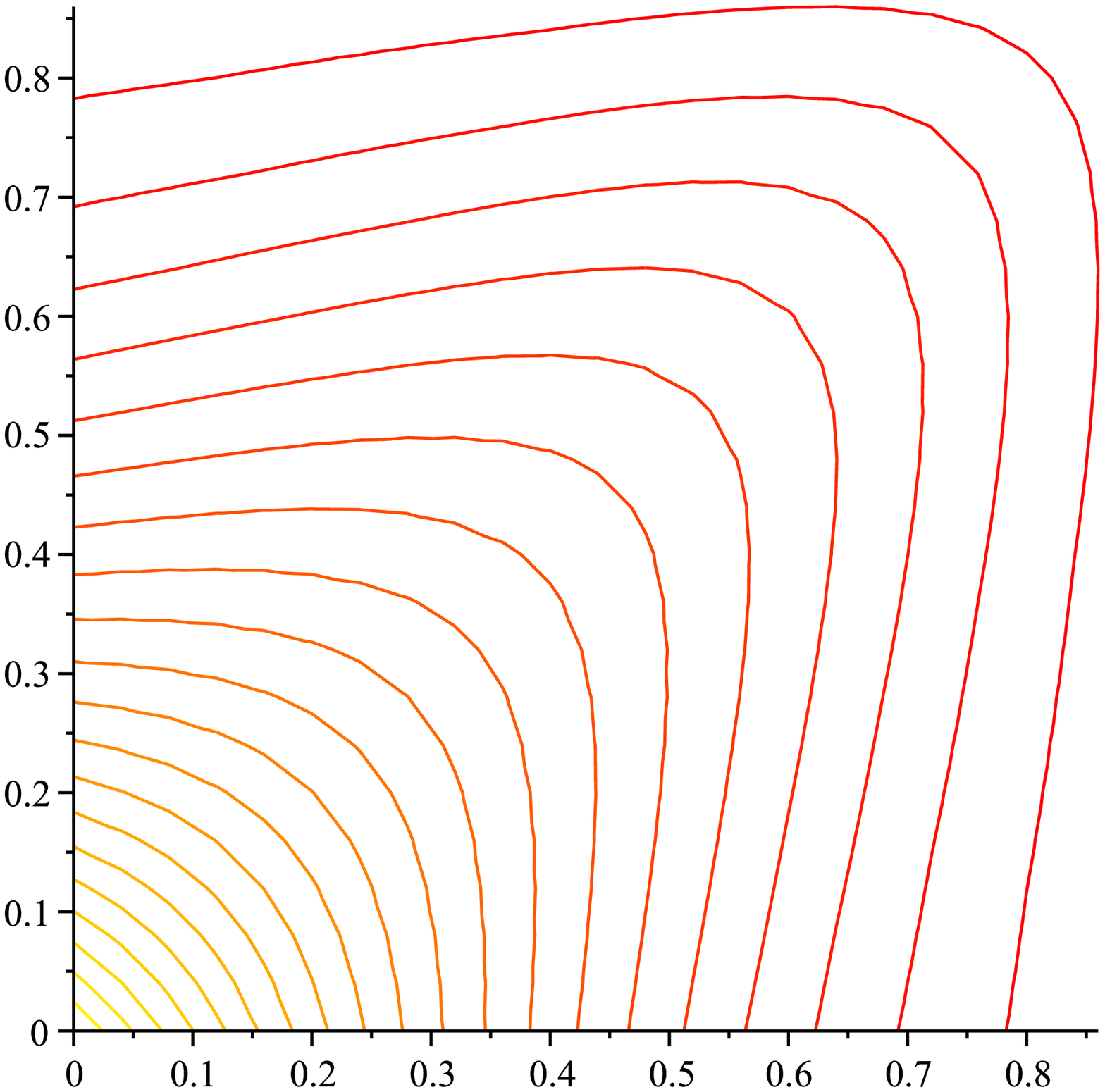}
			\caption{$\varrho_{ij}^2=0.9$}
			\label{fig_gamma:subfig6}
		\end{subfigure}
	  }
	  \caption{Plot and contour curves for the dependence hazard rate derivative $\gamma_{0,\{i,j\}}$ of a bivariate correlated frailty model 
	  based on a bivariate chi-squared distribution over the range $[0,1]^2$, with $\varrho_{ij}^2 = 0.9$.}
	  \label{fig:gamma_0_fuer_Zj2}
	\end{figure}
\end{Example}
%
%
%
%
%
%
%
\section{Copulas with higher degree of dependence: the hazard approach}
\label{sec:higher_dep}
In this section higher order exponent measures are studied in terms of hazard parameters.
We start with binary proportional dependence models which 
are developed in the spirit of Example~\ref{Bsp:The_parametrization}.
\begin{Example}\label{Bsp:higher_dim_prop_dep_model}
	The bivariate proportional dependence model can be extended for dimension $d\geq 2$.
	Consider a family $(\beta_I)_{I\subset I_0, |I| = 2}$ so that 
	$0 \leq \beta_I \leq \frac 1 {(d-1)^2}$ 
	holds for each $I$.
	Let $\Lambda_1,\dots,\Lambda_d$ be continuous univariate hazard measures on $\mathbb R$.
	Then
	\begin{align*}
		S(t_1,\dots,t_d) = 	\exp \bigg( - \sum_{i=1}^d \Lambda_i(t_i) - 
					\sum_{\substack{ i,j=1,\\ i\neq j}}^d \beta_{\{i,j\}} \Lambda_i(t_i) \Lambda_j(t_j) \bigg)
	\end{align*}
	defines the survival function of a $d$-dimensional variable with proportional hazard dependence of the bivariate exponential measures
	$\frac {\intd \Lambda_{\{i,j\}}} {\intd (\Lambda_i \otimes \Lambda_j)} = - \beta_{\{i,j\}}$.
\end{Example}
In Example~\ref{Bsp:higher_dim_prop_dep_model} all higher dimensional exponent measures $\Lambda_I$, $|I|\geq 3$, vanish. 
To see that $S$ is indeed a survival function, assume (with no loss of generality) that $\Lambda_i(t_i) = t_i$ 
and consider the bivariate exponentially distributed random variable $\b X_I$ with survival function 
\begin{align*}
	S^I(t_i,t_j) = \exp \left( - \frac {t_i + t_j} {d-1} - \beta_I t_i t_j \right),
\end{align*}
where $\frac 1 {d-1} \b X_I$ has the form \eqref{eq:414} with parameter $0\leq \beta_I (d-1)^2 \leq 1$.
Introduce now a new $(-\infty,\infty]^d$-valued random variable $\widetilde{\b X}_I$ with the coordinates $\b X_I$ at the position $I$.
Let all other coordinates of $\widetilde{\b X}_I$ be infinite $\infty$.
Thus $\widetilde{\b X}_I$ has the improper survival function
$\widetilde{S}^I(t_1,\dots,t_d) = S^I(t_i,t_j), I= \{i,j\}.$
Consider now independent random variables $(\widetilde{\b X}_I)_I$ for $I\subset I_0, |I| = 2$.
Then $\min_{I, |I|=2} \widetilde{\b X}_I$ has the desired survival function $\prod_{I, |I|=2} \widetilde{S}^I$.

Notice that this modeling of dependence can be generalized by linking arbitrary improper survival functions of the above kind.
This is again accomplished by applying the minimum operation to the corresponding random variables with values in $(-\infty,\infty]^d$.

In contrast to Example~\ref{Bsp:higher_dim_prop_dep_model} survival models with trivial dependence measures $\Lambda_I = 0$ 
for all $I\neq I_0$ and $|I|\geq 2$ are studied below.
Our apporach is based on statistical arguments used earlier for hazard-based score functions.
To this end, let $P_0$ be the uniform distribution on the interval $(0,1)$ with hazard measure $\Lambda_0$.
Introduce the subset $L_{2,d}^{(0)}(P_0^d)$ of those $d$-dimensional, square-integrable functions $g$ in $L_{2,d}(P_0^d)$ for which
\begin{align*}
	\int_0^1 g(x_1,\dots,x_d) \intd x_i = 0
\end{align*}
holds for all $i \in I_0$. Functions $g$ of this kind serve as score functions for statistical models. 
We start with the following useful observation.
Consider a copula which admits a $P_0^d$-density $f: (0,1)^d \rightarrow [0,\infty)$.
Then the following conditions \eqref{item:all_bla} and \eqref{item:the_fun} are equivalent.
\begin{enumerate}[(I)]
	\item\label{item:all_bla} All $|I|$-dimensional marginals of the copula are independence copulas for $I \neq I_0$.
	\item\label{item:the_fun} The function $g:=f-1$ satisfies $g\geq -1$ and $g\in L_{2,d}^{(0)}(P_0^d)$.
\end{enumerate}
In case of \eqref{item:all_bla}, \eqref{item:the_fun} all exponent measures $\Lambda_I$ vanish for $I\neq I_0$, $|I|\geq 2$.
Then the dependence part of $S(\b x) = \prod_{i=1}^d (1 - x_i) S_{I_0}(\b x)$ reads as
\begin{align*}
	S_{I_0}(\b x) = 1 + \frac {\int_{\b x}^{\b1}  g \intd P_0^d} {\prod_{i=1}^d (1-x_i)}
\end{align*}
which is studied below.
Recall that for $d=1$ the operator
\begin{align*}
	R: L_{2,1}^{(0)}(P_0) \to L_{2,1}(P_0),\  R(g)(x) = g(x) -\frac {\int_x^1 g(u) \intd u} {1-x}
\end{align*}
is an isometry between Hilbert spaces; see \cite{Ritov88} as well as \cite{Efron90}.
Note that $R(g)$ has an interpretation as hazard rate derivative for survival models.
We will now introduce a multivariate version $R_d$ of $R$ on $L_{2,d}^{(0)}(P_0^d)$.
\begin{Lemma}
	For $g_1,\dots,g_d \in  L_{2,1}^{(0)}(P_0)$ introduce the multivariate function 
	$g(x_1,\dots,x_d) = \prod_{i=1}^d g_i(x_i)$
	of  $L_{2,d}^{(0)}(P_0^d)$.
	We may then define 
	\begin{align} \label{eq:def_R_d}
		R_d(g)(\b x) := \prod_{i=1}^d R(g_i)(x_i), \ \b x \in (0,1)^d.
	\end{align}
	(a) The operator $R_d$ can be uniquely extended on
	$ L_{2,d}^{(0)}(P_0^d)$
	and
	$R_d:  L_{2,d}^{(0)}(P_0^d) \to L_{2,d}(P_0^d)$
	is an isometry between Hilbert spaces.
	\\
	(b) Suppose that $\gamma = R_d(g)$ holds for some $g\in  L_{2,d}^{(0)}(P_0^d)$.
	Then 
	\begin{align} \label{eq:hmm}
		\frac {\int_{\b x}^{\b 1} g \intd P_0^d} {\prod_{i=1}^d (1-x_i)} = (-1)^d \int_{\b 0}^{\b x} \gamma \intd \Lambda_0^d
	\end{align}
	holds for all $\b x \in (0,1)^d$.
\end{Lemma}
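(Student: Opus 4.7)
The plan is to exploit the product structure of both the definition~\eqref{eq:def_R_d} and the claimed identity~\eqref{eq:hmm} by first establishing both statements on simple tensor products and then extending to all of $L_{2,d}^{(0)}(P_0^d)$ by density and continuity. Before doing so, I would identify $L_{2,d}^{(0)}(P_0^d)$ with the Hilbert tensor product $\bigotimes_{i=1}^d L_{2,1}^{(0)}(P_0)$. Starting from the orthogonal decomposition $L_{2,1}(P_0) = \mathbb{R}\cdot 1 \oplus L_{2,1}^{(0)}(P_0)$ and forming the $d$-fold Hilbert tensor product yields the ANOVA-type orthogonal decomposition $L_{2,d}(P_0^d) = \bigoplus_{I \subset I_0} H_I$, where $H_I$ collects those tensors which are mean-zero in each coordinate $i \in I$ and constant in the remaining coordinates. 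For $g = \sum_I g_I$ the constraint $\int g \intd x_j = 0$ reads $\sum_{I \not\ni j} g_I = 0$, and by pairwise orthogonality of the $H_I$ every component $g_I$ with $j \notin I$ vanishes; ranging $j$ over $I_0$ forces $g = g_{I_0}$, so $L_{2,d}^{(0)}(P_0^d) = \bigotimes_{i=1}^d L_{2,1}^{(0)}(P_0)$.

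For part~(a), the simple tensors form a total subset of this Hilbert tensor product, and formula~\eqref{eq:def_R_d} realises $R_d$ on them as the algebraic tensor product $\bigotimes_{i=1}^d R$ of the univariate isometry $R$. Since tensor products of Hilbert space isometries are isometric on simple tensors (compute $\langle R(x)\otimes R(y), R(x')\otimes R(y')\rangle = \langle x,x'\rangle\langle y,y'\rangle$) and extend uniquely by continuity, this yields the claimed isometric extension $R_d : L_{2,d}^{(0)}(P_0^d) \to L_{2,d}(P_0^d)$.

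For part~(b), on simple tensors $g = \prod_{i=1}^d g_i$ the identity~\eqref{eq:hmm} factorises over the coordinates and reduces to the one-dimensional claim
\[ \frac{\int_x^1 g_i(u) \intd u}{1-x} = -\int_0^x R(g_i)(s)\intd \Lambda_0(s), \quad x \in (0,1). \]
Setting $h_i(x) := (1-x)^{-1} \int_x^1 g_i(u) \intd u$, the quotient rule gives $h_i'(x) = -(1-x)^{-1}(g_i(x) - h_i(x)) = -(1-x)^{-1} R(g_i)(x)$, while $h_i(0) = 0$ by the mean-zero condition on $g_i$; integration from $0$ to $x$ yields the claim. To extend~\eqref{eq:hmm} to arbitrary $g \in L_{2,d}^{(0)}(P_0^d)$, I would show that both sides depend continuously on $g$ for each fixed $\b x \in (0,1)^d$. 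The left-hand side is $L_2(P_0^d)$-continuous in $g$ by Cauchy-Schwarz applied to $\b 1_{(\b x, \b 1]}$. For the right-hand side, using $\intd \Lambda_0/\intd P_0 = (1-u)^{-1}$ and Cauchy-Schwarz yields
\[ \int_{\b 0}^{\b x} |\gamma|\intd \Lambda_0^d \leq \|\gamma\|_{L_2(P_0^d)} \Big( \prod_{i=1}^d \int_0^{x_i} (1-u)^{-2} \intd u \Big)^{1/2} < \infty, \]
which is continuous in $\gamma$, hence in $g$ via the isometry from part~(a). The agreement on the dense set of simple tensors thus extends to the whole space.

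The main obstacle I anticipate is exactly the density step in part~(b): the measure $\Lambda_0$ blows up near $1$, so the right-hand side of~\eqref{eq:hmm} is not obviously continuous in $\gamma \in L_{2,d}(P_0^d)$. The change-of-measure bound above controls this on every compact subcube of $(0,1)^d$, which is all that~\eqref{eq:hmm} requires, and resolves the difficulty.
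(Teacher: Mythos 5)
Your proof is correct and follows the same skeleton as the paper's: establish the claim on product functions, then extend by density using the isometry. The differences are ones of completeness rather than route. For part~(a), your ANOVA-type identification of $L_{2,d}^{(0)}(P_0^d)$ with the Hilbert tensor product $\bigotimes_{i=1}^d L_{2,1}^{(0)}(P_0)$ makes explicit why the simple tensors $\prod_{i=1}^d g_i(x_i)$ with all $g_i$ mean-zero span a dense subspace; the paper obtains the same fact via an orthonormal-basis argument and leaves most of the verification to the reader. For part~(b), the paper cites Janssen (1994), eq.~(3.8), for the univariate identity, which you instead prove directly by the quotient rule (your computation checks out, including the sign $(-1)^d$ after taking the $d$-fold product); and where the paper passes to the limit with the single remark ``since $R_d$ is an isometry,'' you supply the justification that is actually needed there, namely that the right-hand side of \eqref{eq:hmm} --- an integral against the unbounded measure $\Lambda_0^d$ --- is still a continuous linear functional of $\gamma \in L_{2,d}(P_0^d)$ for each fixed $\b x \in (0,1)^d$, via Cauchy--Schwarz and $\int_0^{x_i}(1-u)^{-2}\intd u < \infty$. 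That is precisely the point the paper's proof glosses over, so your version is the more complete of the two.
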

\begin{proof}
		(a) 	The proof is mostly left to the reader and extends the bivariate calculus of \cite{JR02}.
			Let $(\gamma_j)_{j\in\mathbb N}$ be an orthonormal basis of the Hilbert space $L_{2,1}(P_0)$.
			Then product functions 
			$\b x\mapsto \prod_{j=1}^d \gamma_{i_j} (x_{j})$ 
			form an orthonormal basis of $L_{2,d}(P_0^d)$.
			We first introduce $R_d^{-1}$ of these elements.
			In the next step products of $g_j := R^{-1}(\gamma_j)$ are considered as in \eqref{eq:def_R_d}
			and $R_d$ as well as $R_d^{-1}$ can be extended by taking linear combinations of basis elements.
			It is easy to see that $R_d^{-1}$ is surjective.
			If we add $g_0 \equiv 1$ then $(g_j)_{j \in \mathbb N_0}$ is an orthonormal basis of $L_{2,1}(P_0)$ and
			$\b x \mapsto \prod_{j=1}^d g_{i_j} (x_j)$ including the index $0$ is an orthonormal basis of $L_{2,d}(P_0^d)$.
			However, if one index is equal to zero then $\prod_{j=1}^d g_{i_j}$ is orthogonal to $L_{2,d}^{(0)}(P_0^d)$.
			All other elements of this kind are images of $R_d^{-1}$.
			Thus $R_d^{-1}$ is surjective. \\
		(b) 	For $d=1$ formula \eqref{eq:hmm} is proved in (3.8) of \cite{Janssen94}.
			Fubini's theorem extends \eqref{eq:hmm} to $d$-dimensional product functions which are dense
			in $L_{2,d}^{(0)}(P_0^d)$.
			Since $R_d$ is an isometry, statement \eqref{eq:hmm} holds in its general form.
\end{proof}
\begin{Theorem}
	Suppose that \eqref{item:all_bla} or \eqref{item:the_fun} holds and let $S_{I_0}$ be the dependence part of the survival function
	$S(\b x) = \prod_{i=1}^d (1-x_i) S_{I_0}(\b x)$ of the copula.
		\\(a) 	Set $\gamma = R_d(g)$ for $g=f-1$. The dependence part $S_{I_0}$ of $S$ is given by
			\begin{align} \label{eq:S_d=1+}
				S_{I_0}(\b x) = 1 + (-1)^d \int_{\b 0}^{\b x} \gamma \intd \Lambda_0^d, \ \b x\in (0,1)^d.
			\end{align}
		\\	%
		(b) 	For each $0 \leq \vartheta \leq 1$ the function $f_\vartheta := 1 + \vartheta g$ is
			the density of a copula with survival function
			$S_\vartheta(\b x) = \prod_{i=1}^d (1-x_i) S_{I_0,\vartheta} (\b x)$.
			The dependence part $S_{I_0,\vartheta}$ is given by
			\begin{align} \label{eq:S_d,theta=exp}
				S_{I_0,\vartheta}(x) = \exp\left( (-1)^d \vartheta \int_{\b 0}^{\b x} \gamma \intd \Lambda_0^d + o(\vartheta) \right)
			\end{align}
			as $\vartheta \to 0$ with a uniform remainder $o(\vartheta)$ on $[0, 1-\epsilon]^d$ for $\epsilon > 0$.
\end{Theorem}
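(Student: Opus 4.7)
The plan is to combine the pre-theorem representation of $S_{I_0}$ with the isometry identity \eqref{eq:hmm} for part (a), and then to deduce part (b) from (a) by exploiting the linearity of $R_d$ together with a Taylor expansion of $\log(1+y)$.

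For part (a), I would first observe that under \eqref{item:all_bla}--\eqref{item:the_fun}, Lemma \ref{Lemma1-2} reduces the factorization of the copula survival function to $S(\b x) = \prod_{i=1}^d (1-x_i) \cdot S_{I_0}(\b x)$, since all intermediate dependence parts $S_I$ with $2 \leq |I| < d$ equal $1$ and the univariate survival functions are $1-x_i$. Writing the left-hand side directly as $\int_{\b x}^{\b 1} (1+g) \intd P_0^d$ and solving for $S_{I_0}$ recovers the identity stated just before the theorem. Substituting \eqref{eq:hmm} with $\gamma = R_d(g)$ then yields \eqref{eq:S_d=1+} immediately.

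For part (b), I would first verify that $f_\vartheta := 1 + \vartheta g$ is a copula density: nonnegativity follows from $g \geq -1$ and $0 \leq \vartheta \leq 1$, while uniformity of all univariate marginals is inherited from $g \in L_{2,d}^{(0)}(P_0^d)$. Since $f_\vartheta$ satisfies the hypotheses of part (a) with $g$ replaced by $\vartheta g$, the linearity of $R_d$ identifies the associated object as $\vartheta \gamma$, and part (a) delivers
\begin{align*}
  S_{I_0,\vartheta}(\b x) = 1 + (-1)^d \vartheta \int_{\b 0}^{\b x} \gamma \intd \Lambda_0^d.
\end{align*}
The Taylor expansion $\log(1+y) = y + O(y^2)$ then formally converts this into the exponential representation \eqref{eq:S_d,theta=exp}.

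The only step that is not routine is controlling the $O(\vartheta^2)$ remainder \emph{uniformly} on $[0, 1-\epsilon]^d$. I would handle this via the second expression in \eqref{eq:hmm}:
\begin{align*}
  \Big| (-1)^d \int_{\b 0}^{\b x} \gamma \intd \Lambda_0^d \Big|
    = \frac{\big|\int_{\b x}^{\b 1} g \intd P_0^d\big|}{\prod_{i=1}^d (1-x_i)}
    \leq \frac{\|g\|_{L_2(P_0^d)}}{\epsilon^d}
\end{align*}
for all $\b x \in [0, 1-\epsilon]^d$ by Cauchy--Schwarz, using $P_0^d([0,1]^d) = 1$. Multiplying by $\vartheta$ shows that the argument of the logarithm lies in, say, $[1/2, 3/2]$ once $\vartheta$ is small enough, uniformly in $\b x$ on $[0,1-\epsilon]^d$; the quadratic remainder in the Taylor expansion is then uniformly bounded by a constant times $\vartheta^2$, which is $o(\vartheta)$ as $\vartheta \to 0$. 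This yields \eqref{eq:S_d,theta=exp}. The main obstacle is therefore merely the bookkeeping of this uniform estimate; everything else follows immediately from (a) and the linearity of $R_d$.
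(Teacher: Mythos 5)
Your proof is correct and takes essentially the same route as the paper's: part (a) from writing $S(\b x) = \prod_{i=1}^d (1-x_i) + \int_{\b x}^{\b 1} g \intd P_0^d$ and applying \eqref{eq:hmm}, and part (b) by applying (a) to $\vartheta g$, using linearity of $R_d$, and taking logarithms. Your explicit Cauchy--Schwarz bound establishing the uniformity of the $o(\vartheta)$ remainder on $[0,1-\epsilon]^d$ fills in a detail the paper leaves implicit.
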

\begin{proof}
		(a) 	Observe that $S(\b x) = \prod_{i=1}^d (1-x_i) + \int_{\b x}^{\b 1} g \intd P_0^d$ holds.
			If we divide by the marginals, then \eqref{eq:hmm} yields the result.
		\\(b) 	Observe that $\vartheta \gamma$ corresponds to the density of \eqref{eq:S_d=1+}.
			Taking the logarithm of \eqref{eq:S_d=1+} we obtain \eqref{eq:S_d,theta=exp}.
\end{proof}
\begin{Remark}
		(a) 	Let $S_{I_0,\vartheta} = \exp\left( (-1)^d \Lambda_{I_0,\vartheta} \right)$ be the 
			exponent representation of \eqref{eq:S_d,theta=exp}.
			Up to the term $o(\vartheta)$ the exponent is related to 
			$\vartheta \int_0^x \gamma \intd \Lambda_0^d$.
		\\(b) 	Note that $g := \frac \intd {\intd \vartheta} \log f_\vartheta |_{\vartheta = 0}$
			is the score function of the densities $f_\vartheta$ at $\vartheta = 0$.
			These score functions can be used to obtain score tests for the null hypothesis of independence,
			i.e. $\vartheta = 0$, against higher degree dependence.
\end{Remark}
Statistical aspects of hazard dependence models will be studied in a forthcoming work.
%
%
%
%
\section{Discussion and conclusion}
The present article considered the representation of dependent random variables in terms of dependence hazard measures 
and the corresponding dependence parts of the joint survival function.
Furthermore, we have examined a useful representation of correlated frailty models 
which led to the expression of higher-dimensional dependence hazard rates in terms of moments of an exponential family.
In particular, these rates are simply covariances of frailty variables for dimension 2.
It has also been shown that correlated frailty models are minimum-infinitely divisible iff its frailty vector is sum-infinitely divisible.
This equivalence yields an interesting one-to-one correspondence between the L\'evy meausure of the frailty vector
and the exponent measure of the minimum-infinitely divisible correlated frailty model.
It follows that analytical properties of such L\'evy measures might as well be examined in the context of extreme value theory.

Throughout the article we have made strong use of the copula concept 
which enabled us to use arbitrary, continuous marginal distribution functions.
Here, the special shared frailty model had been shown to have a natural connection to Archimedean copulas.
Eventually, we have utilized this concept to analyze particularly interesting semiparametric quantities of dependence.
In the final chapter an isometry between Hilbert spaces exposed the relation of this semiparametric function to tangents in the space of square-integrable functions.
It is pointed out how score functions for dependence models can be transformed in terms of hazard dependence quantities.
\section*{Acknowledgements}
\noindent This article has been developed at the Mathematical Institute of the University of D\"usseldorf, Germany,
while this had been the permanent address of all three authors.
\appendix
\section{Continuation and formulas of Example~\ref{Bsp:CFM_ZiQuadrat}(a)}
\label{sec:formulas_chi_square}
	\noindent The joint survival function of $(X_1,X_2,X_3)$ at time $\b t = (t_1,t_2,t_3)$ is given by
	\begin{align*}
		S(\b t) 
			&= \left(8 \Delta t_1 t_2 t_3 + 4 \Delta_{12} t_1 t_2 + 4 \Delta_{13} t_1 t_3 + 4 \Delta_{23} t_2 t_3 + \sum_{j=1}^3 2 \sigma_j^2 t_j + 1 \right)^{-\frac 1 2},
	\end{align*}
	where 
		$\Delta = 	\sigma_1^2 \sigma_2^2 \sigma_3^2 
				+ 2 \sigma_{12} \sigma_{13} \sigma_{23} 
				- \sigma_{12}^2 \sigma_{3}^2 - \sigma_{13}^2\sigma_{2}^2 - \sigma_{23}^2\sigma_{1}^2 $ 
	denotes the determinant of $\bs \Sigma$ 
	and 
		$\Delta_{ij}=\sigma_i^2\sigma_j^2 - \sigma_{ij}^2$ 
	is the determinant of the covariance matrix
	\begin{align*}
		\begin{pmatrix} 
				\sigma_i^2 & \sigma_{ij} 
				\\ \sigma_{ij} & \sigma_j^2 
		\end{pmatrix}
	\end{align*}
	of $Z_i$ and $Z_j$.
	The marginal survival function of $X_i$, $i \in \{1,2,3\}$, is given by
	\begin{align*}
		S_i(t_i) = S(t_{i}, \b 0) = \left( 1 + 2 \sigma_i^2 t_i \right)^{-\frac 1 2}
	\end{align*}
	so that
	\begin{align} \begin{split} \label{eq:triv_surv_fktn}
		S(\b t) & = 	\bigg\{ \frac {\Delta} {\sigma_1^2 \sigma_2^2 \sigma_3^2 } \prod_{i=1}^3 \left( S_i^{-2}(t_i) -1 \right)
		    \\ &	+ \sum_{(i,j)} \frac {\Delta_{ij}} {\sigma_i^2 \sigma_j^2} \left( S_i^{-2}(t_i) -1 \right) \left( S_j^{-2}(t_j) -1 \right)
				+ \sum_{i=1}^3  S_i^{-2}(t_i) - 1
				\bigg\}^{-\frac 1 2}
	\end{split} \end{align}
	holds; see \ref{sec:proof_formula} for a detailed derivation. Note that 
	\begin{align*}
		\frac {\Delta} {\sigma_1^2 \sigma_2^2 \sigma_3^2 } = 1 + 2 \prod_{(i,j)} \varrho_{ij} - \sum_{(i,j)} \varrho_{ij}^2 
		\quad \text{ and } \quad 
		\frac {\Delta_{ij}} {\sigma_i^2 \sigma_j^2} =  1- \varrho_{ij}^2,
	\end{align*}
	i.e.
	the survival copula $S \circ (S_1^{ -1},S_2^{ -1},S_3^{ -1})$ only depends on the bivariate parameters $\varrho_{ij}$.
	Similarly, the bivariate survival function of $X_i$ and $X_j$ 
	is given by
	\begin{align*}
		S^{\{i,j\}}(t_i,t_j) 	&= S(\b t_{\{i,j\}}, 0) = \left( 1 + 2 \sigma_i^2 t_i + 2 \sigma_j^2 t_j + 4 \Delta_{ij} t_i t_j  \right)^{-\frac 1 2}
					\\
					&= \left( S_i^{-2}(t_i) S_j^{-2}(t_j) - \varrho_{ij}^2 \left[S_i^{-2}(t_i) -1 \right]  \left[ S_j^{-2}(t_j) -1 \right] \right)^{-\frac 1 2},
	\end{align*}
	see also \citet[Section 5.6]{Wienke11}. 
	The dependence part of order 2 is given by 
	\begin{align*}
		S_{\{i,j\}}(t_i,t_j) 	= \frac {S^{\{i,j\}}(t_i,t_j)} {S_i(t_i) S_j(t_j)} 
					= \left(1 - \varrho_{ij}^2 \left[ 1 - S_i^2(t_i) \right]  \left[ 1- S_j^2(t_j) \right] \right)^{-\frac 1 2}
	\end{align*}
	with hazard density
	\begin{align*}
		\lambda_{\{i,j\}}(t_i,t_j) = 2 \sigma_{\{i,j\}}^2 \left[S^{\{i,j\}}(t_i,t_j)\right]^4.
	\end{align*}
	The dependence part of order 3 is given by $S_{\{1,2,3\}}(\b t)$
	\begin{align*}
		= \left\{ \frac 	
		{ \prod_{(i,j)} \left( -1 + \varrho_{ij}^2 [1-S_i^2(t_i)] [1-S_j^2(t_j)] \right) }
							{ 	-1 
									+ \sum_{(i,j)} \varrho_{ij}^2 [1 - S_i^{2}(t_i)] [1 - S_j^{2}(t_j)] 
									- 2 \prod_{(i,j)} \varrho_{ij}^2 \prod_{i=1}^3 (1-S_i^2(t_i)) } 
						\right\}^{\frac 1 2}.
	\end{align*}
	Note that $\varrho_{12} = \varrho_{13} = 0$ already implies $S_{\{1,2,3\}} \equiv 1$ as well as $S = S_1 S^{\{2,3\}}$, even if $\varrho_{23} \neq 0$.

\section{Proof of Formula~\eqref{eq:triv_surv_fktn}}
\label{sec:proof_formula}
\noindent Let $\Sigma^{-1}$ denote the inverse of the covariance matrix $\Sigma$ and let
\begin{align*}
	(g_{ij})_{i,j=1,2,3} := \Delta \Sigma^{-1} =
			\begin{pmatrix} 
				\sigma_2^2\sigma_3^2 - \sigma_{23}^2 		& \sigma_{13}\sigma_{23}-\sigma_{12}\sigma_3^2 		& \sigma_{12} \sigma_{23}-\sigma_{13}\sigma_2^2
				\\ 
				\sigma_{13}\sigma_{23}-\sigma_{12}\sigma_3^2 	& \sigma_1^2\sigma_2^2 - \sigma_{12}^2  			& \sigma_{13}\sigma_{12} - \sigma_1^2 \sigma_{23}
				\\ 
				\sigma_{12} \sigma_{23}-\sigma{13}\sigma_2^2	&\sigma_{13}\sigma_{12} - \sigma_1^2 \sigma_{23} 	& \sigma_1^2\sigma_2^2 - \sigma_{12}^2 
			\end{pmatrix}
\end{align*}
define the matrix $(g_{ij})_{i,j \in \{1,2,3\}}$.
The joint probability density function of $(Z_1,Z_2,Z_3)$ is given by
\begin{align*}
	f(\b z) = \frac 1 {\sqrt{(2\pi)^3 \Delta}} \exp\bigg(-\frac { \Sigma^{-1} } 2  \big\{ 	g_{11}z_1^2 + g_{22}z_2^2 + g_{33}z_3^2 
		 \\ + 2 g_{12} z_1 z_2 + 2 g_{13} z_1 z_3 + 2 g_{23} z_2 z_3 \big\}\bigg).
\end{align*}
By Lemma~\ref{lemma:surv_eq_lapl} the survival function of the correlated frailty model $\b X = \big(\frac{Y_i}{Z_i^2}\big)_{i \in \{1,2,3\}}$ is given by
\begin{align*}
	S(\b t) = \int \int \int \exp\left( - \sum_{i=1}^3 t_i z_i^2 \right) f(\b z) \intd z_1 \intd z_2 \intd z_3.
\end{align*}
Consider
\begin{align*}
	E :&=
	\int \int \int 	\exp\left( - \sum_{i=1}^3 t_i z_i^2 \right)  
	\\
	& \, \times
			\exp	\left(-\frac { \Sigma^{-1} } 2 	\left\{ g_{11}z_1^2 + g_{22}z_2^2 + g_{33}z_3^2 
										+ 2 g_{12} z_1 z_2 + 2 g_{13} z_1 z_3 + 2 g_{23} z_2 z_3 
									\right\}
				\right) 
	\\
	& \quad \intd z_1 \intd z_2 \intd z_3 
	\\
	&= \int \int \exp\left\{ -t_2z_2^2 - t_3 z_3^2 - \frac 1 {2\Delta} g_{22} z_2^2 - \frac 1 {2\Delta} g_{33} z_3^2 - \frac 1 \Delta g_{23} z_2 z_3 \right\}
	\\
	& \, \times \int \exp\left\{ -t_1 z_1^2 - \frac 1 {2 \Delta} g_{11} z_1^2 - \frac 1 \Delta \left[ g_{12} z_2 + g_{13} z_3\right] z_1 \right\} \intd z_1 \intd z_2 \intd z_3.
\end{align*}
Defining
\begin{align*}
	a_1 := t_1 + \frac {g_{11}} {2\Delta},\  b_1 := \frac 1 {2 \Delta} \left[ g_{12} z_2 + g_{13} z_3 \right],\ y_1 := 2 \Delta t_1 + g_{22}
\end{align*}
we obtain
\begin{align*}
		\int &\exp\left\{ -t_1 z_1^2 - \frac 1 {2 \Delta} g_{11} z_1^2 - \frac 1 \Delta \left[ g_{12} z_2 + g_{13} z_3 \right] z_1 \right\} \intd z_1
		\\
		&= \int \exp\left\{ -a_1 z_1^2 - 2 b_1 z_1 \right\} \intd z_1
		\\
		&= \sqrt{\frac \pi {a_1} } \exp\left( \frac {b_1^2} {a_1} \right)
		\\
		&= \sqrt{ \frac {2 \pi \Delta} {y_1} }  \exp\left( \frac 1 {2 \Delta y_1} \left[ g_{12}^2 z_2^2 + 2 g_{12} g_{13} z_2 z_3 + g_{13}^2 z_3^2 \right]^2 \right)
\end{align*}
and therefore
\begin{align*}
	E	&= \sqrt{ \frac {2 \pi \Delta} {y_1} } \int \int \exp\bigg\{ - \left[ t_2 + \frac {g_{22}} {2 \Delta} - \frac {g_{12}^2} {2 \Delta y_1} \right] z_2^2
		\\
		& \qquad - 2 \left[ \left( \frac 1 {2\Delta} g_{23} - \frac {g_{12} g_{13}} {2\Delta y_1} \right) z_3 \right] z_2 \bigg\} \intd z_2
		\\
		&\qquad \quad \times 
			\exp\left\{- \left[t_3 + \frac {g_{33}} {2\Delta} - \frac {g_{13}^2} {2\Delta y_1} \right] z_3^2 \right\} \intd z_3.  
\end{align*}
With
\begin{align*}
	y_2 := 2 \Delta t_2 + g_{22},\ a_2 := \frac { 2 t_2 \Delta y_1 + g_{22} y_1 - g_{12}^2 }  {2 \Delta y_1} = \frac {y_1 y_2 - g_{12}^2} {2 \Delta y_1}
\end{align*}
and
\begin{align*}
	b_2 := \frac {z_3} {2\Delta} \left( g_{23} - \frac {g_{12} g_{13}} {y_1} \right),\ y_3 := 2 \Delta t_3 + g_{33}
\end{align*}
we obtain
\begin{align*}
	\int &\exp\left\{ - \left[ t_2 + \frac {g_{22}} {2 \Delta} - \frac {g_{12}^2} {2 \Delta y_1} \right] z_2^2
			- 2 \left[ \left( \frac 1 {2\Delta} g_{23} - \frac {g_{12} g_{13}} {2\Delta y_1} \right) z_3 \right] z_2 \right\} \intd z_2
		\\
		&= 	\int \exp\left\{ - a_2 z_2^2 - 2 b_2 z_2 \right\} \intd z_2
		\\
		&=	\sqrt{\frac \pi {a_2} } \exp\left( \frac {b_2^2} {a_2} \right)
		\\
		&=	\sqrt{ \frac { 2 \pi \Delta y_1 }  { y_1 y_2 - g_{12}^2 } }  
			\exp\left\{ \frac { y_1 }  { y_1 y_2 - g_{12}^2 }\frac {z_3^2}  {2\Delta}  \left( g_{23}^2 - 2 \frac {g_{12}g_{13}g_{23}} {y_1} + \frac {g_{12}^2g_{13}^2} {y_1^2} \right) \right\}
\end{align*}
and thus
\begin{align*}
	S(\b t) &= \sqrt{\frac {\Delta} {2\pi \left(y_1y_2 - g_{12}^2\right) } }  
		\\
		&\qquad\times
			\int \exp	\bigg\{ - z_3^2 	\bigg[ \frac{y_1 y_3 - g_{13}^2} {2\Delta y_1} - \frac {y_1} { 2\Delta \left(y_1 y_2 - g_{12}^2\right)}  
									\\
									& \qquad \quad \times \left( g_{23}^2 - 2 \frac {g_{12}g_{13}g_{23}} {y_1} + \frac {g_{12}^2g_{13}^2} {y_1^2} \right)
								\bigg]
					\bigg\}
			\intd z_3
		\\
		&= \sqrt{\frac {\Delta} {2\pi \left(y_1y_2 - g_{12}^2\right) } }  
		\\
		&\qquad\times
			\int \exp	\left\{ - z_3^2 
						\left[ \frac {y_1 y_2 y_3 - g_{23}^2 y_1 - g_{13}^2 y_2 - g_{12}^2 y_3 + 2 g_{12} g_{13} g_{23} }  {2 \Delta \left(y_1 y_2 - g_{12}^2\right)} \right]
					\right\}
			\intd z_3
		\\
		&= \Delta \left(y_1 y_2 y_3 - g_{23}^2 y_1 - g_{13}^2 y_2 - g_{12}^2 y_3 + 2 g_{12} g_{13} g_{23}\right)^{-\frac 1 2}
		\\
		&= 	\big( 8 \Delta t_1 t_2 t_3 
				+ 4 g_{33} t_1 t_2  + 4 g_{22} t_1 t_3 + 4 g_{11} t_2 t_3 
				\\
				& \qquad + 2 \sigma_1^2 t_1 + 2\sigma_2^2 t_2 + 2\sigma_3^2 t_3
				+1
			\big)^{-\frac 1 2}.
\end{align*}
With $S_i^{-1}(u_i) = \frac {u_i^{-2} -1} {2\sigma_i^2}$ this yields \eqref{eq:triv_surv_fktn}.

\section{Further Proofs}
\label{sec:proofs}

\begin{proof}[Proof of Example~\ref{Bsp:doppelt}]
  (a) is obvious since $(Z_1, \dots, Z_d)$ is uniquely determined by its two-dimensional marginals.\\
  (b)	  By Lemma~\ref{lemma:surv_eq_lapl}~(a) the vector $(X_1,\dots,X_d)$ 
	  has independent components iff $(W_1,\dots,W_d)$ does.
	  Thus, the proof for~\eqref{item:log-normal_frailty} is trivial since the exponential function is a bijective mapping.
	  To prove (b) for~\eqref{item:chi2_frailty} we introduce $a = Cov(Z_i, Z_j) Var(Z_i)^{-1}$ and we show
	  that $Z_i^2$ is independent of $Z_j^2$ iff $Z_i$ is independent of $Z_j$ for arbitrary indices $i \neq j$.
	  Without loss of generality let $E(Z_i) = E(Z_j) = 0$ and $Var(Z_i) = Var(Z_j) = 1$.
	  It is well known that, for a multivariate normally distributed vector $(Z_i,Z_j)$, 
	  the random variables $Z_i$ and $Z_j - a Z_i$ are independent.
	  Now consider $Cov(Z_i^2, Z_j^2) = E((Z_i^2-1)(Z_j^2-1))$ and repeatedly replace $Z_j$ by $(Z_j - a Z_i) + a Z_i$
	  so that this covariance is expressed through expectations only depending on $Z_i$ and $Z_j - a Z_i$.
	  The independence of these two random variables yields
	  \begin{align*}
	   	Cov(Z_i^2,Z_j^2) & = E( (Z_i^2 - 1)(Z_j^2 - 1) ) \\
	   	& = E( (Z_i^2 - 1)((Z_j - a Z_i)^2 + a^2 Z_i^2 + 2 a Z_i (Z_j - a Z_i) - 1) ) \\
	   	& = 0 + a^2 E( (Z_i^2 - 1) Z_i^2) + 0 - 0
	   	 = a^2 (E(Z_i^4) - 1).
	  \end{align*}
	  Hence $Cov(Z_i^2,Z_j^2) = 0$ iff $a = 0$ iff $Cov(Z_i,Z_j) = 0$.
\end{proof}

\begin{proof}[Remark] \label{rem:cfm_sum_id_iff_min_id}
 Let $\b X$ be a correlated frailty model with frailty random vector $\b W$.
 Then the survival function $S$ of $\b X$ is minimum-infinitely disvisible
 iff $\mathfrak{L}(\b W)$ is sum-infinitely divisible.
 \\ \emph{Proof.} Having the sum-infinite divisibility of $\mathfrak{L}(\b W)$,
  the minimum-infinite divisibility of $S$ is obvious:
  this is a simple generalization of Lemma~\ref{lem:min_frailty}(b).
  
  On the other hand, if $S = (S_n)^n$ is minimum-infinitely divisible,
  then it is easy to see via induction that for all $n_1, \dots, n_d \in \mathbb N_0$ and $\b t \in (0,\infty)^d$
  \begin{align*}
   (-1)^{\sum_{i=1}^d n_i} \bigg( \prod_{i \in I} \frac{\partial^{n_i}}{\partial t_i^{n_i}} \bigg) S_n(\b t) \geq 0.
  \end{align*}
  An application of a multivariate version of the Bernstein-Widder theorem (Theorem~1.3.1 of~\cite{Zocher05})
  now shows that $S_n$ is the Laplace transform of a probability measure on $\mathbb{R}^d$.
  Hence, $S_n$ is the survival function of a $d$-dimensional correlated frailty model.
  Finally, another application of Lemma~\ref{lem:min_frailty}(b) 
  and the fact that each distribution determines a unique Laplace transform
  show the sum-infinite divisibility of $\mathfrak{L}(\b W)$.
\end{proof}

\begin{proof}[Proof of Lemma~\ref{lem:lambda_id_w}]
	  Inserting the definition of $\lambda_I$ and the L\'evy-Khintchine formula yields
	\begin{align*}
		 \lambda_I(\b t_I) = & \frac{\tn d}{\tn d \b t_I} \Lambda_I(\b t_I) 
		  = (-1)^{|I|} \frac{\tn d}{\tn d \b t_I} \log \psi_{Q_\b 0}(\b t_I, \b 0) \\
		  = & (-1)^{|I|} \frac{\tn d}{\tn d \b t_I} \Big( -<\b t_I,\b b_I> + \int_{\Rdpo0} \left( e^{-<\b t_I,\b x_I>} - 1 \right) \tn d \eta_{\b 0}(\b x) \Big). 
	\end{align*}
	Hence, the proof for $I = \{i\}$ is covered by the following considerations for a general index set $I \neq \emptyset$.
	Note that, for fixed $\b t_I \in (0,\infty)^{|I|}$, the absoulte value of the function
	\begin{align*}
		\b x_I \mapsto \frac{\tn d}{\tn d \b t_I} \left( e^{-<\b t_I,\b x_I>} - 1 \right) 
		= (-1)^{|I|} \prod_{i \in I} x_i e^{-<\b t_I,\b x_I>}
	\end{align*}
	is bounded by an $\eta_{\b 0}$-integrable function which is independent of $\b t_I$:
	\begin{align*}
		 \Big | \prod_{i \in I} x_i e^{-<\b t_I,\b x_I>}  \Big | \leq \| \b x_I \| 1\{\| \b x_I \| \leq 1\} + M 1\{\| \b x_I \| > 1\}
		  \leq M \min(1,\|\b x_I\|), 
	\end{align*}
	for a constant $M \geq 1$, and this is integrable with respect to $\eta_\b 0$. 
	It follows that differentiation and integration are allowed to change places 
	and eventually the definition of $\eta_{(\b t_I, \b 0)}$ concludes the proof via induction over $d$.
\end{proof}

\begin{proof}[Proof of Lemma~\ref{lem:Lambda_id_w}]
 The proof for $|I|=1$ is a simpler version of the following proof; thus, it is left to the reader.
 Applying Lemma~\ref{lem:lambda_id_w} and Fubini's theorem we get
 \begin{align*}
  \Lambda_I(\b t_I) = & \int_{\b 0}^{\b t_I} \lambda_I(\b s_I) \tn d \b s_I 
    =  \int_{\b 0}^{\b t_I} \int_{\Rdpo0} \prod_{i \in I} x_i \exp(-x_i s_i) \tn d \eta_\b 0(\b x) \tn d \b s_I \\
    = & \int_{\Rdpo0} \prod_{i \in I} \Big( \int_0^{t_i} x_i \exp(-x_i s_i) \tn d s_i \Big) \tn d \eta_\b 0(\b x) \\
    = & \int_{\Rdpo0} \prod_{i \in I} ( 1 - \exp(-x_i t_i)) \tn d \eta_\b 0(\b x) \\
    = & \int_{\Rdpo0} \sum_{J \subset I}  (-1)^{|J|} \prod_{j \in J} \exp(-x_j t_j) \tn d \eta_\b 0(\b x) \\
    = & \int_{\Rdpo0} \sum_{J \subset I}  (-1)^{|J|} \exp(- < \b x_J, \b t_J > ) \tn d \eta_\b 0(\b x).
 \end{align*}
 The linearity of the integral and the definition of $\eta_{(\b t_J,\b 0)}$ finish this proof.
\end{proof}

\section{Further Graphical Illustrations}
\label{sec:graphics}
\begin{figure}[H]
	\centering
	\mbox{ 
		\begin{subfigure}{0.4\textwidth}
			\includegraphics[width=\textwidth]{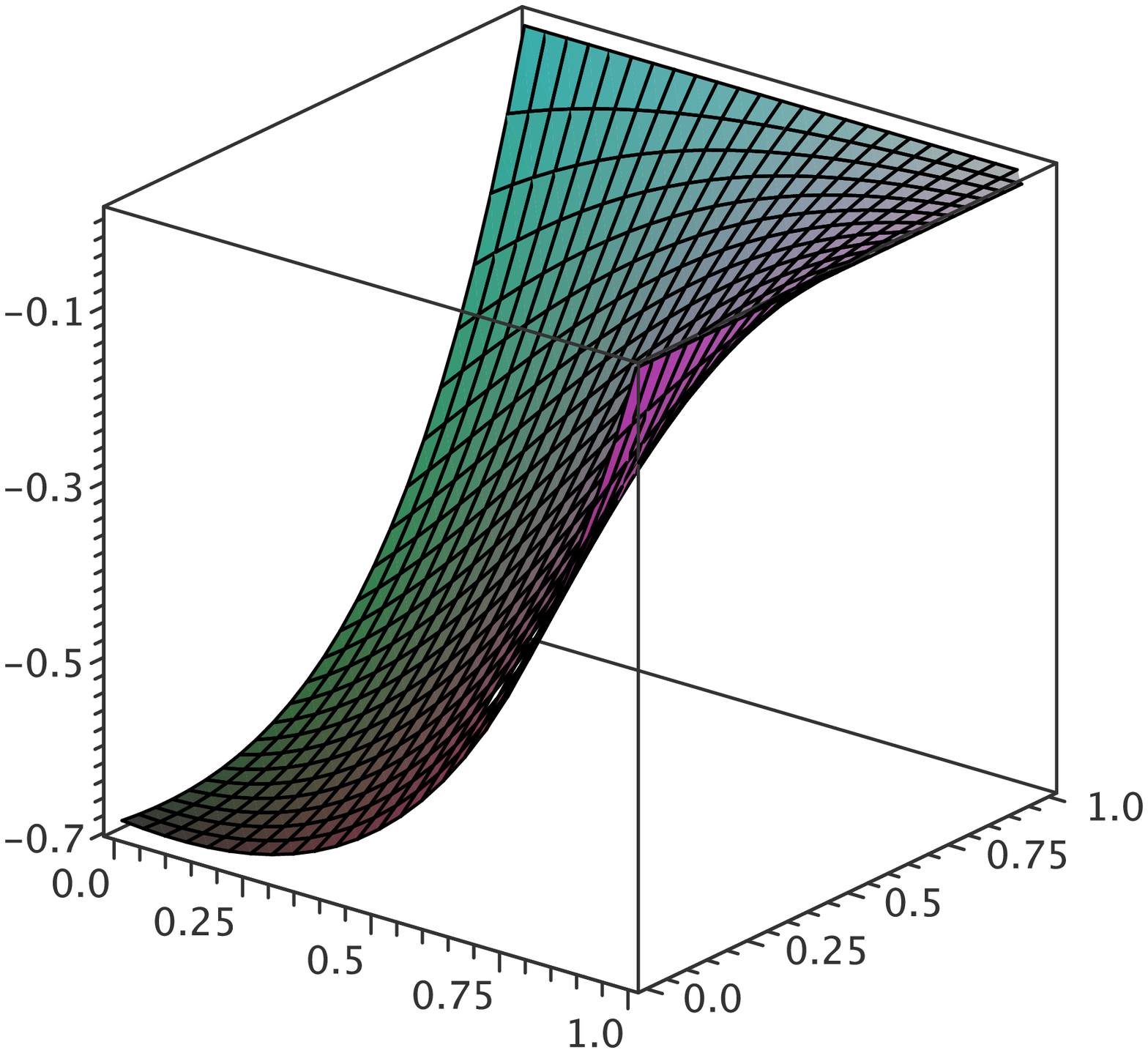}
			\caption{$\theta=-2$}
		\end{subfigure} 
		\begin{subfigure}{0.4\textwidth}
			\includegraphics[width=\textwidth]{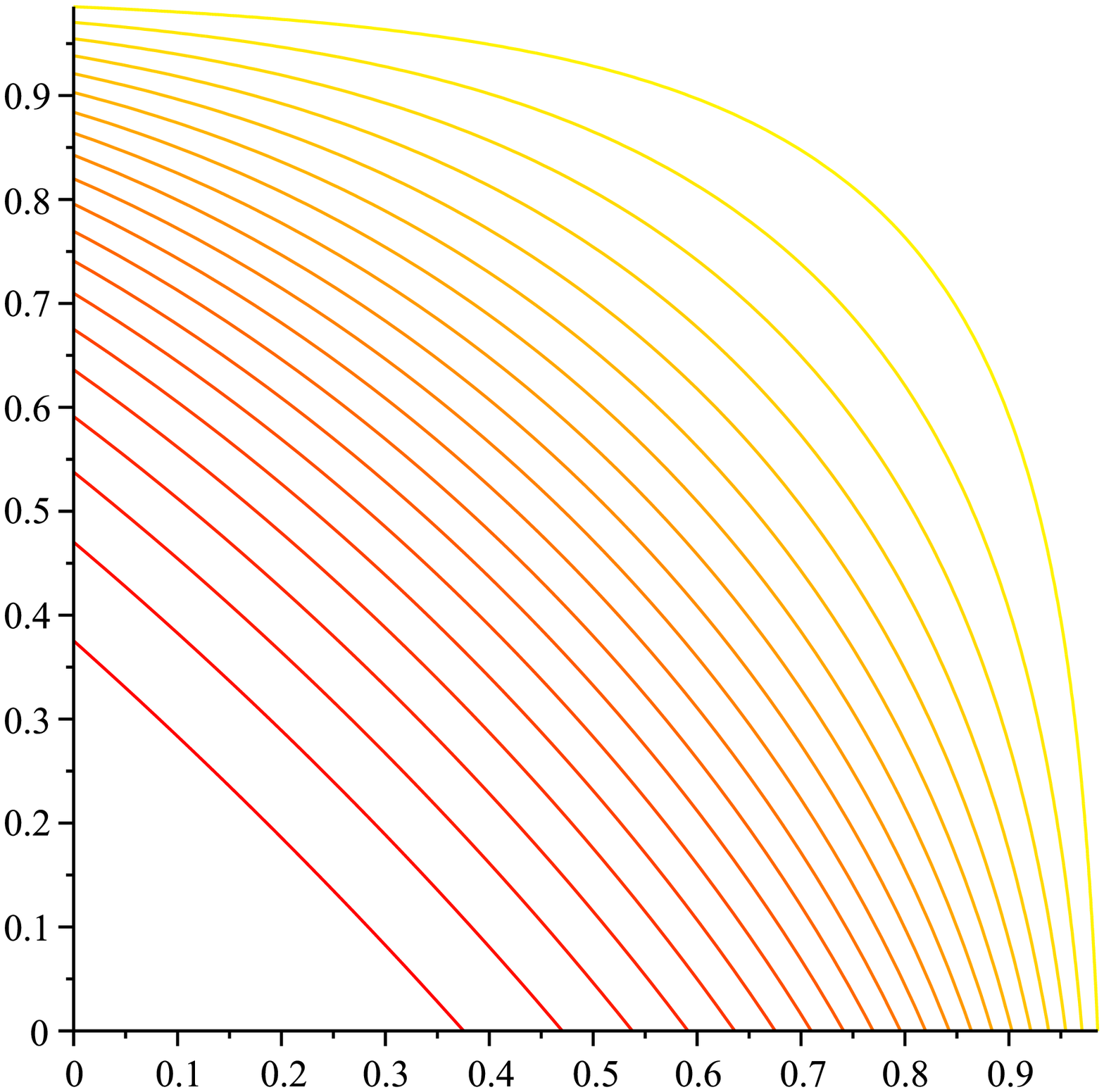}
			\caption{$\theta=-2$}
		\end{subfigure}
	} \\
	\mbox{ 
		\begin{subfigure}{0.4\textwidth}
			\includegraphics[width=\textwidth]{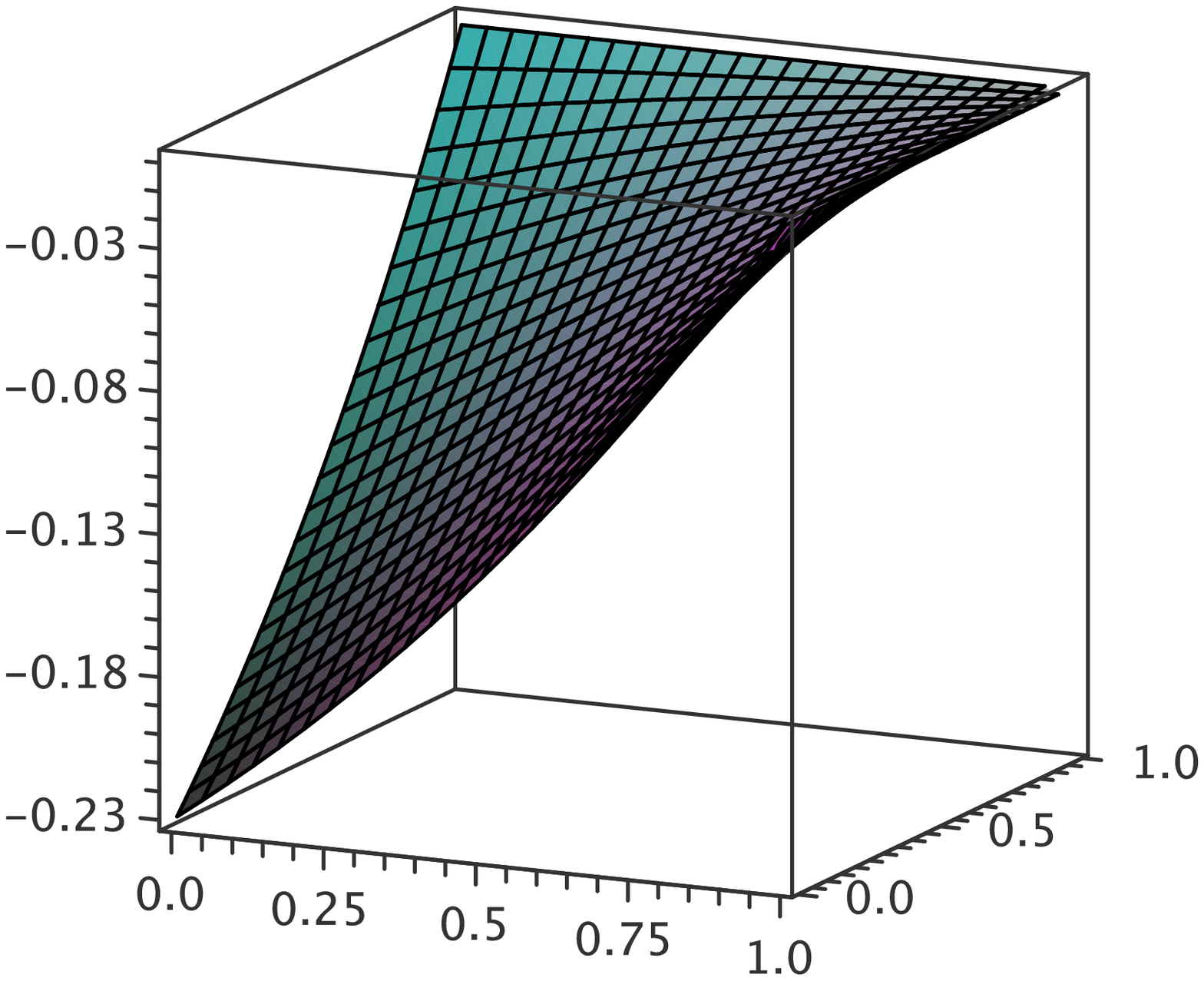}
			\caption{$\theta=-0.5$}
		\end{subfigure} 
		\begin{subfigure}{0.4\textwidth}
			\includegraphics[width=\textwidth]{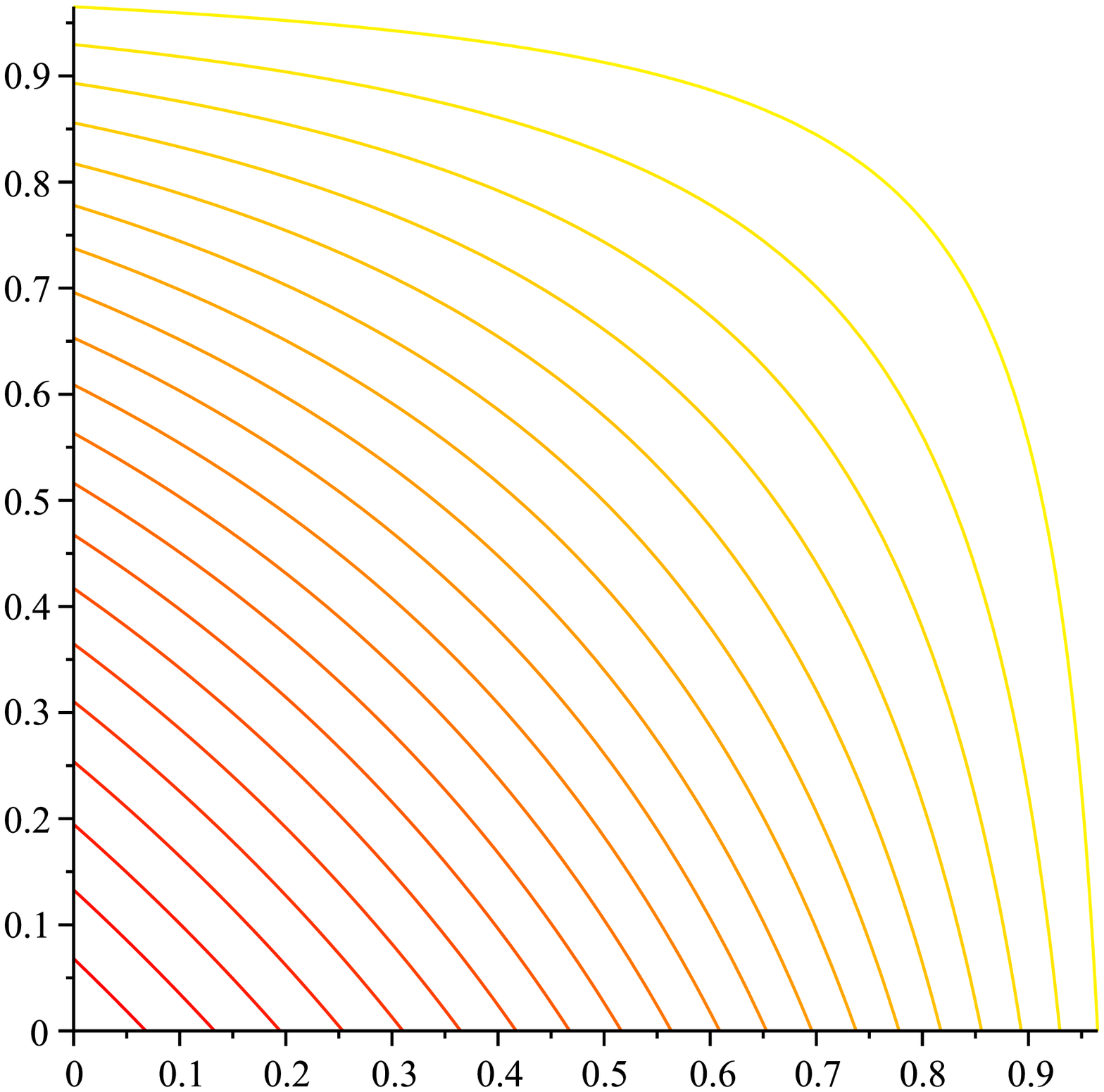}
			\caption{$\theta=-0.5$}
		\end{subfigure}
	}
	\caption{Plots and contour curves for the dependence hazard rate derivative $\gamma_{0,\{i,j\}}$ of a model with Frank copula as survival colula over the range $[0,1]^2$, with different values of $\theta$.}
\label{fig:gamma_0_fuer_frank_2}
\end{figure}
\begin{figure}[H]
	\centering
	\mbox{ 
		\begin{subfigure}{0.4\textwidth}
			\includegraphics[width=\textwidth]{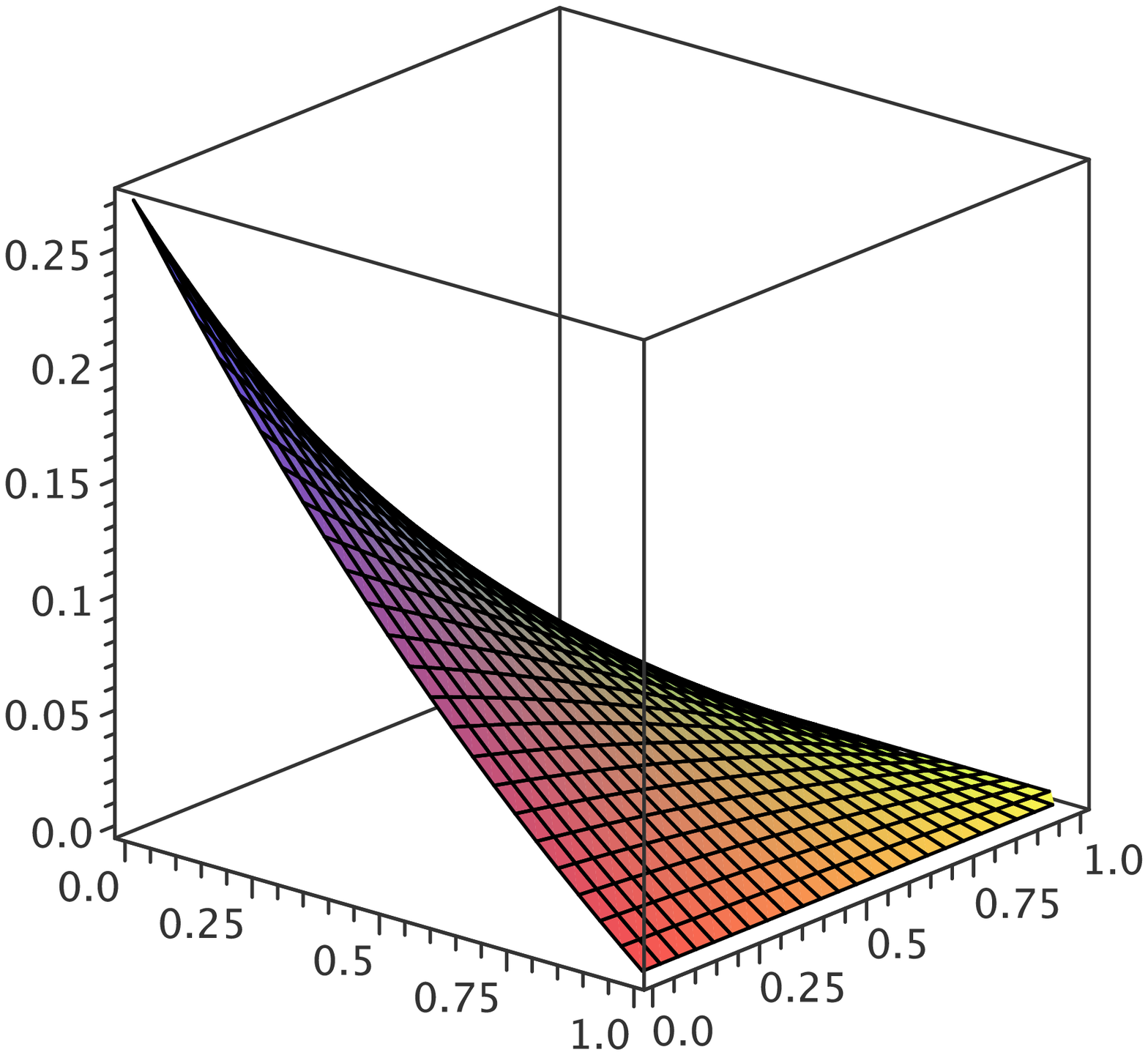}
			\caption{$\theta=0.5$}
		\end{subfigure} 
		\begin{subfigure}{0.4\textwidth}
			\includegraphics[width=\textwidth]{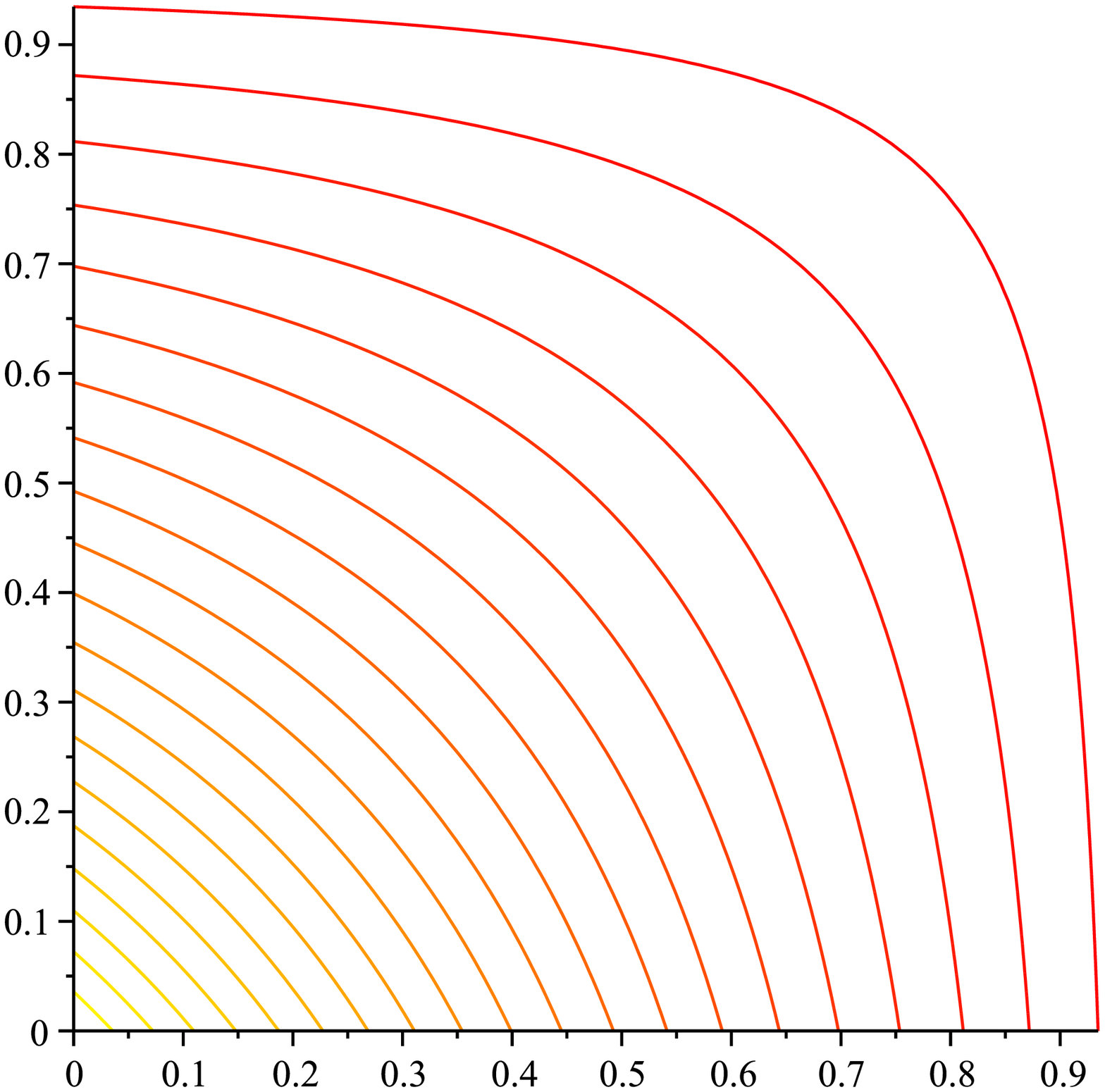}
			\caption{$\theta=0.5$}
		\end{subfigure}
	} \\
	\mbox{ 
		\begin{subfigure}{0.4\textwidth}
			\includegraphics[width=\textwidth]{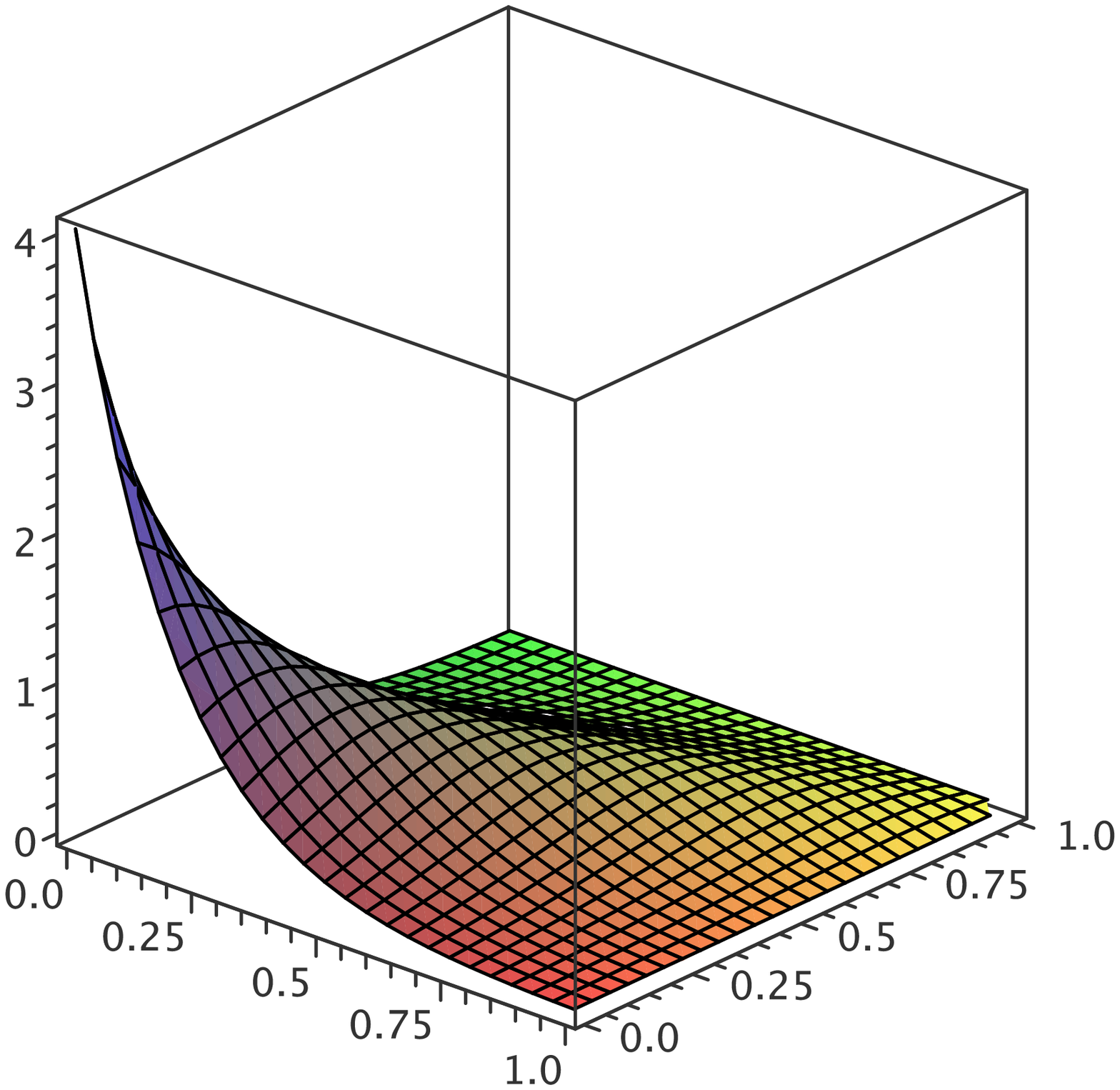}
			\caption{$\theta=5$}
		\end{subfigure} 
		\begin{subfigure}{0.4\textwidth}
			\includegraphics[width=\textwidth]{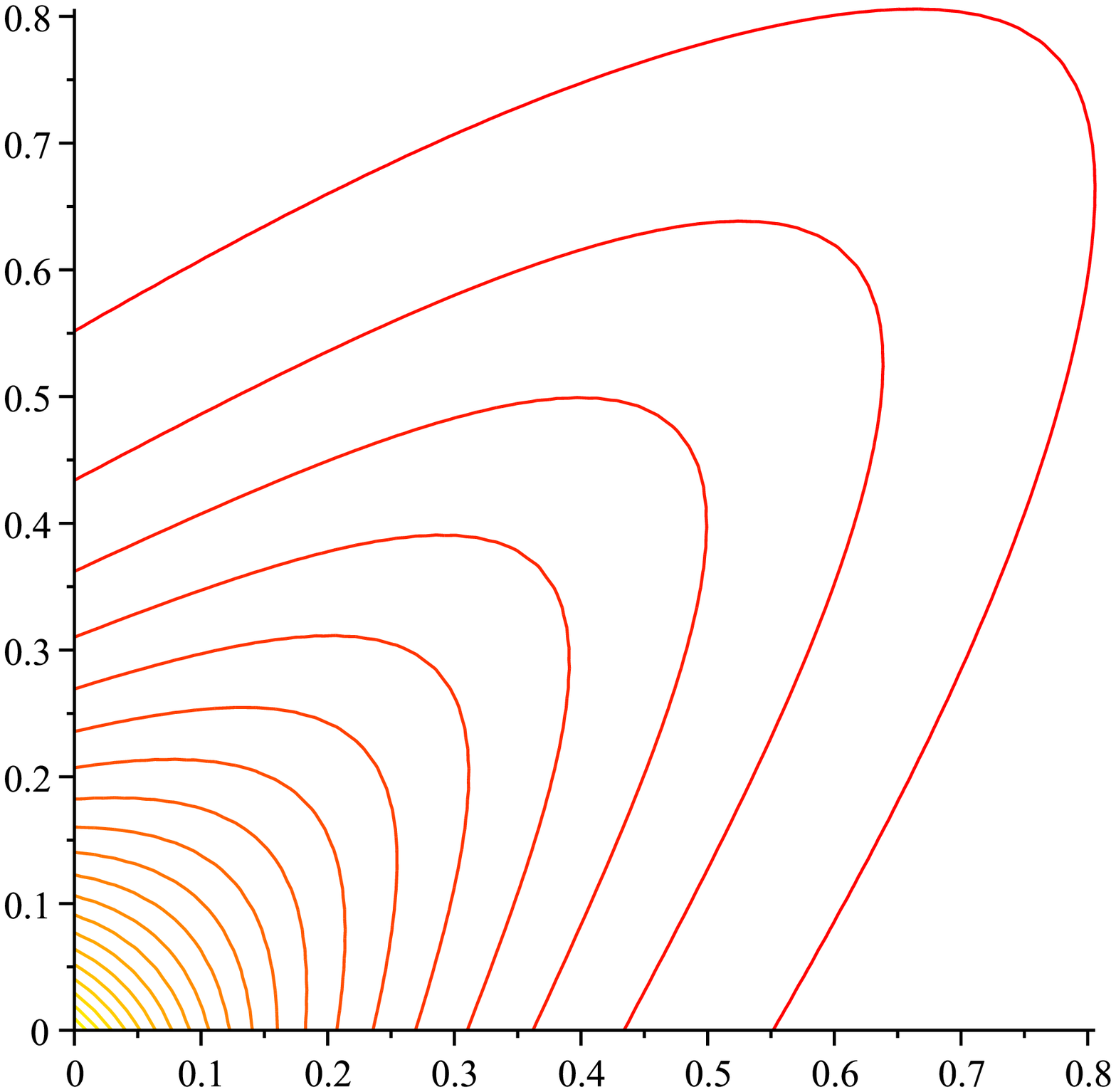}
			\caption{$\theta=5$}
		\end{subfigure}
	} \\
		\mbox{ 
		\begin{subfigure}{0.4\textwidth}
			\includegraphics[width=\textwidth]{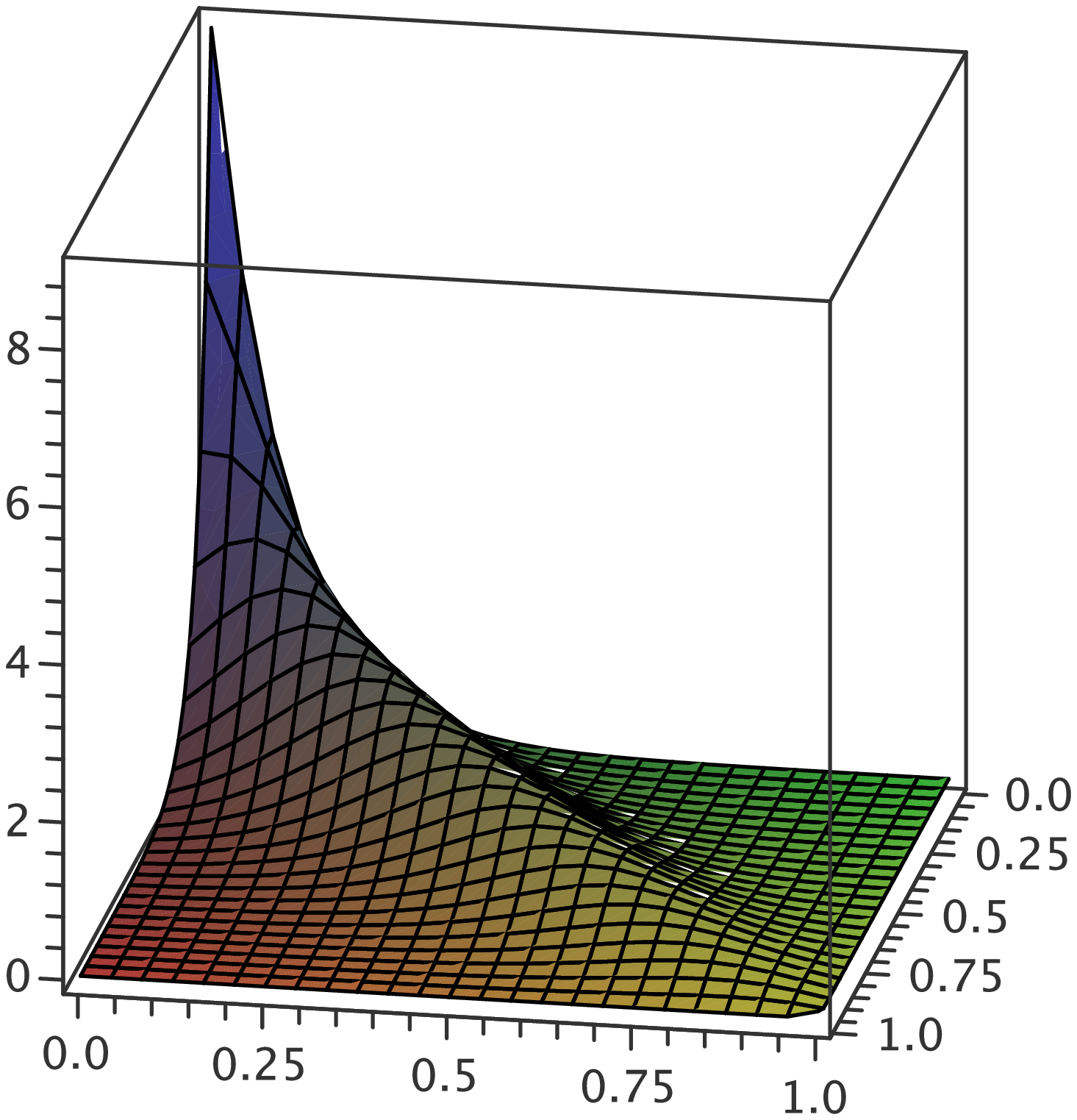}
			\caption{$\theta=10$}
		\end{subfigure} 
		\begin{subfigure}{0.4\textwidth}
			\includegraphics[width=\textwidth]{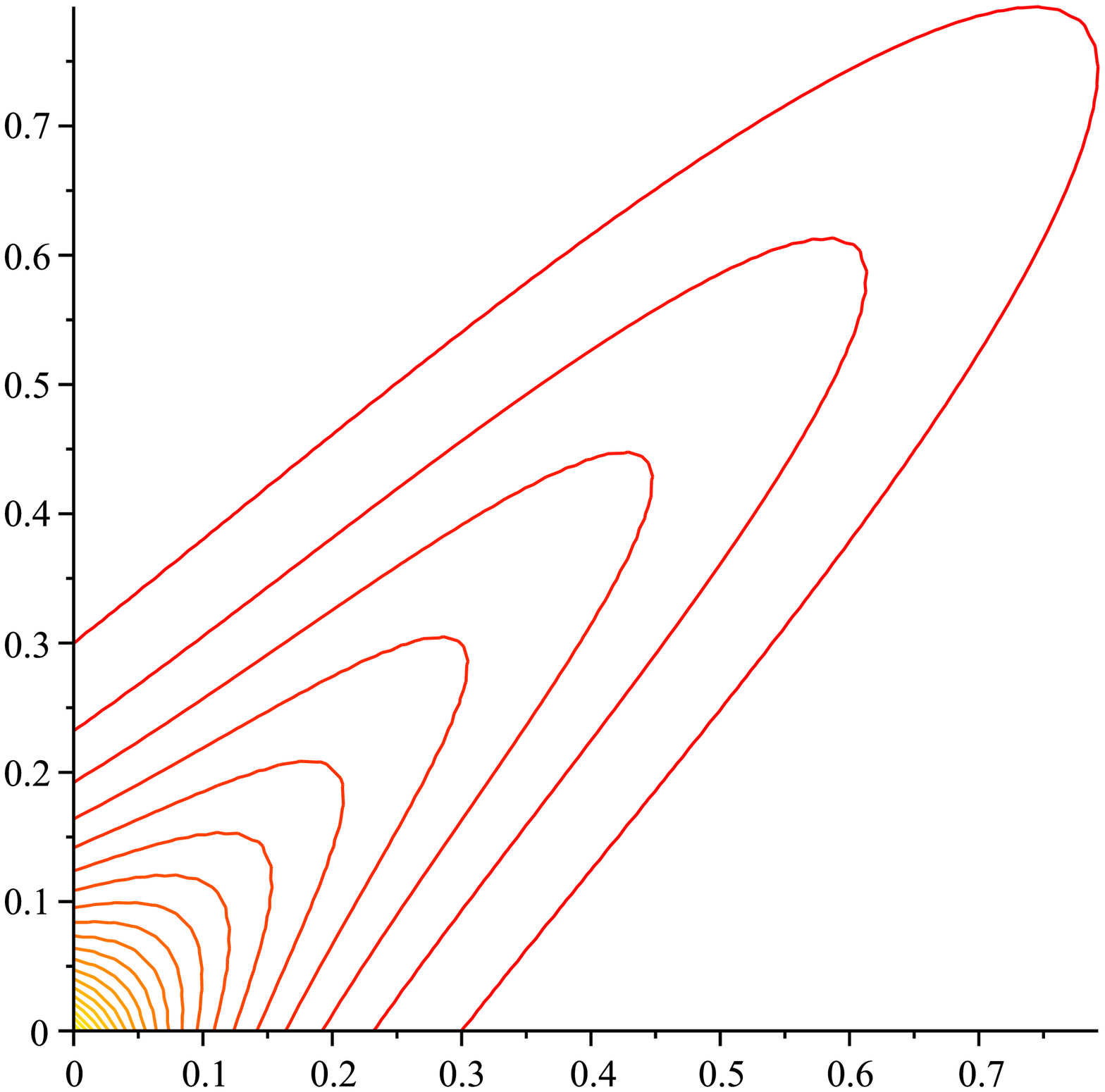}
			\caption{$\theta=10$}
		\end{subfigure}
	}
	\caption{Plots and contour curves for the dependence hazard rate derivative $\gamma_{0,\{i,j\}}$ of a model with Frank copula as survival colula over the range $[0,1]^2$, with different values of $\theta$.}
\label{fig:gamma_0_fuer_frank_3}
\end{figure}
\begin{figure}[H]
	\centering
	\mbox{ 
		\begin{subfigure}{0.4\textwidth}
			\includegraphics[width=\textwidth]{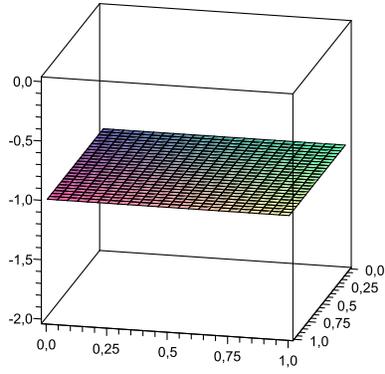}
		\end{subfigure}
	}
	\caption{Plot for the dependence hazard rate derivative $\gamma_{0,\{1,2\}} \equiv -1$ in the proportional hazard rate dependence model~\eqref{eq:415} 
	  for $\beta = 1$ over the range $[0,1]^2$. }
\label{fig:gamma_0_fuer_prop}
\end{figure}
\begin{figure}[H]
	\centering
	\mbox{ 
		\begin{subfigure}{0.4\textwidth}
			\includegraphics[width=\textwidth]{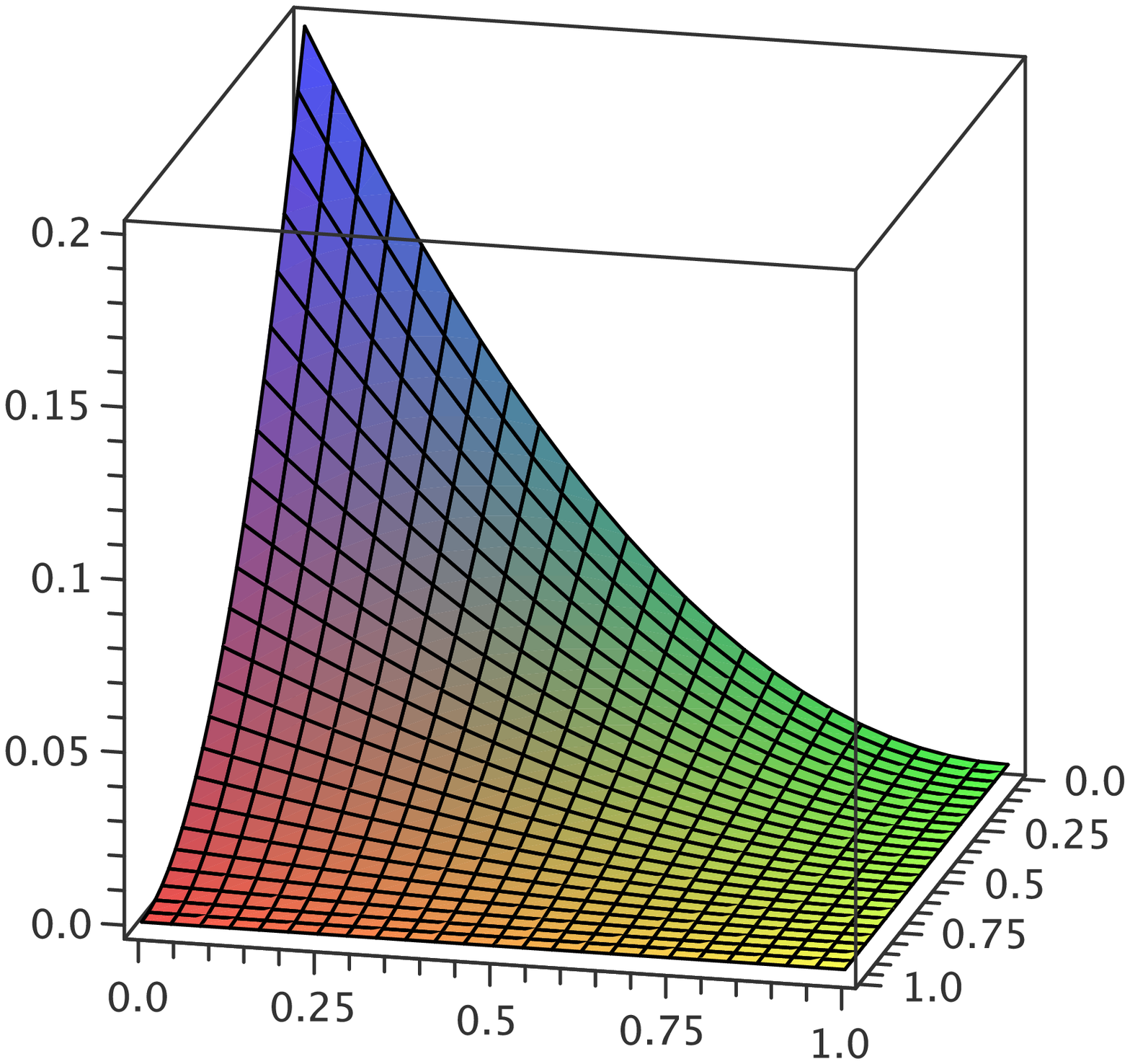}
			\caption{$\varrho_{ij}^2=0.1$}
			\label{fig_gamma:subfig1}
		\end{subfigure} 
		\begin{subfigure}{0.4\textwidth}
			\includegraphics[width=\textwidth]{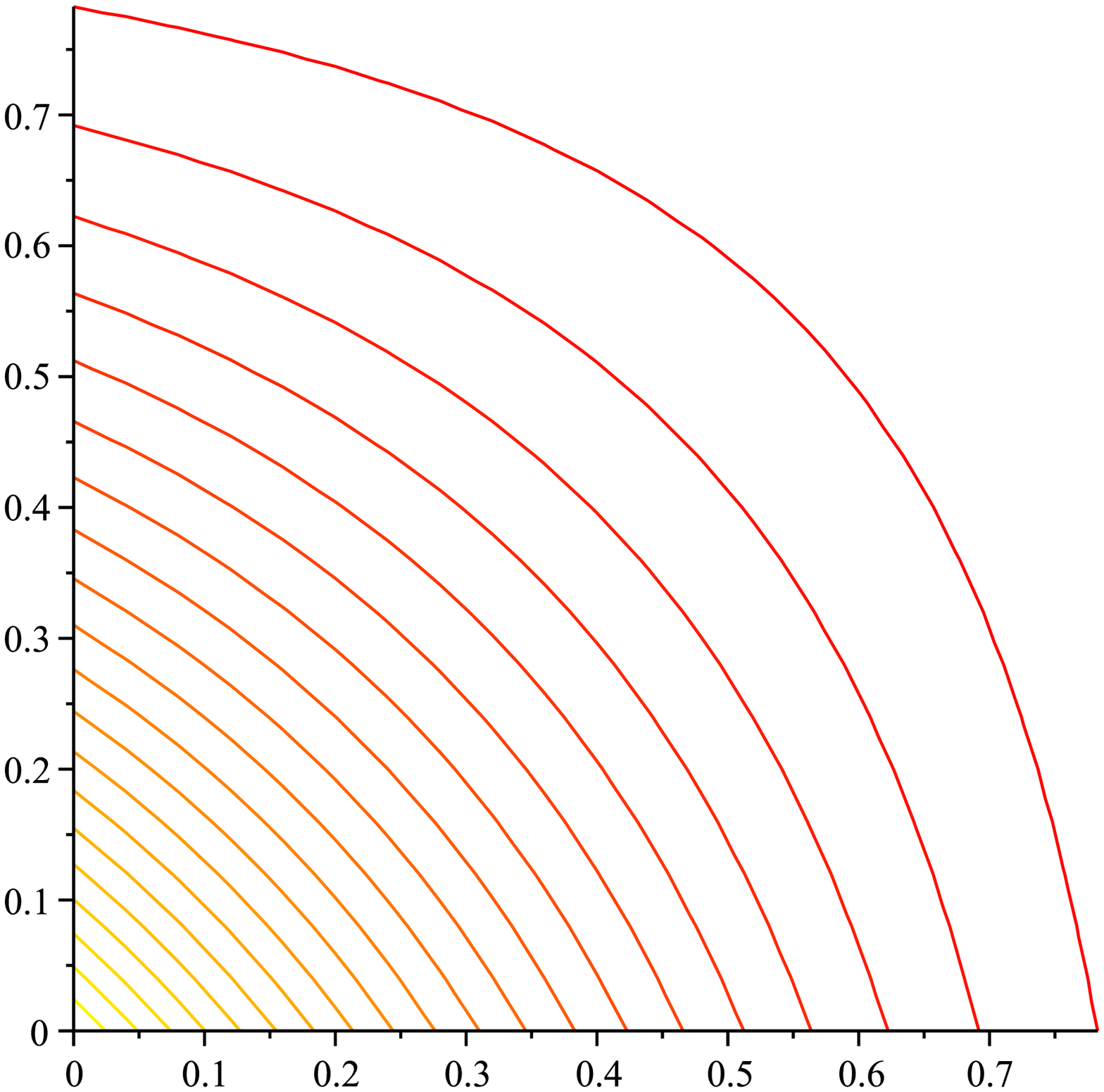}
			\caption{$\varrho_{ij}^2=0.1$}
			\label{fig_gamma:subfig2}
		\end{subfigure}
	} \\
	\mbox{ 
		\begin{subfigure}{0.4\textwidth}
			\includegraphics[width=\textwidth]{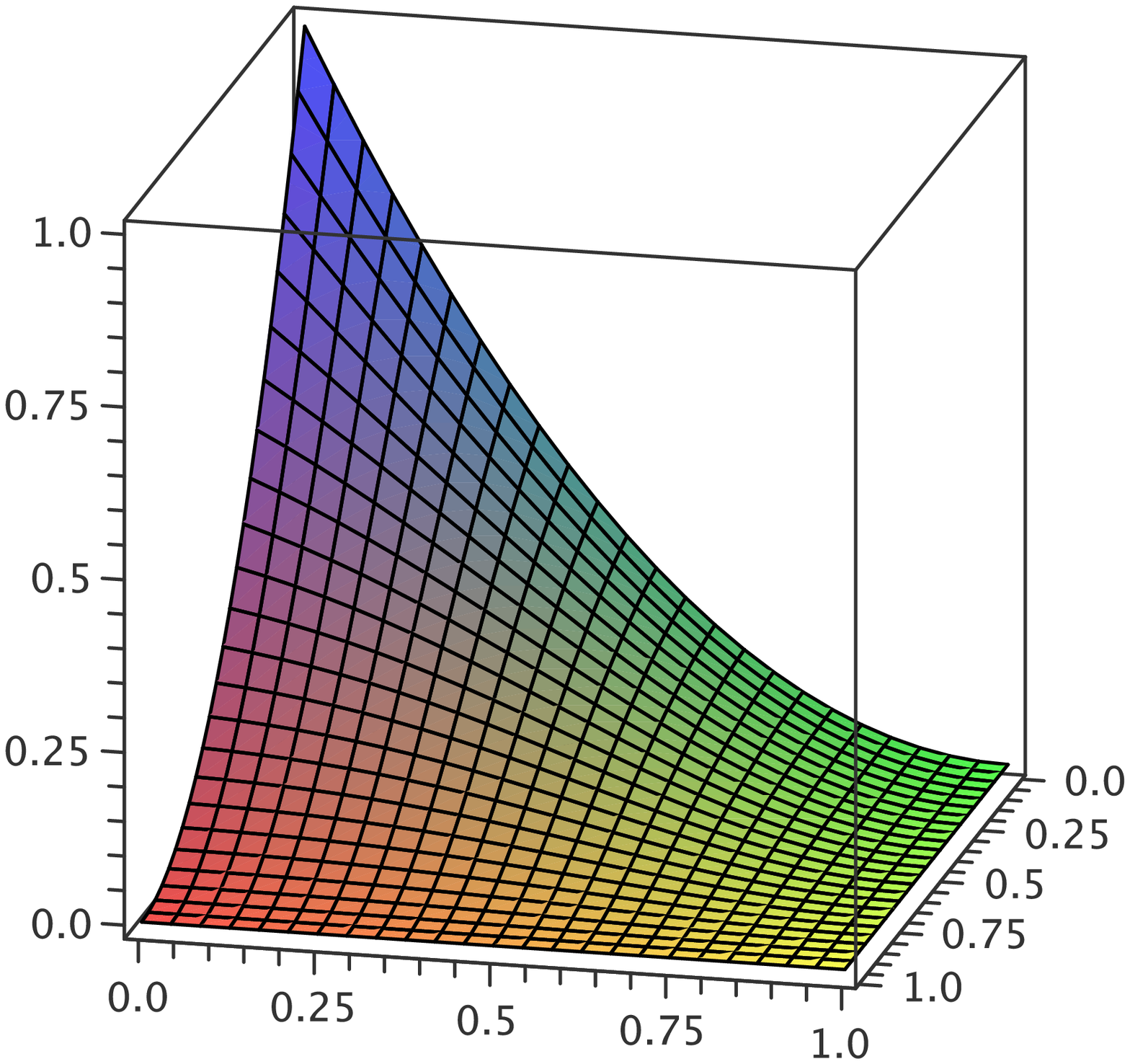}
			\caption{$\varrho_{ij}^2=0.5$}
			\label{fig_gamma:subfig3}
		\end{subfigure} 
		\begin{subfigure}{0.4\textwidth}
			\includegraphics[width=\textwidth]{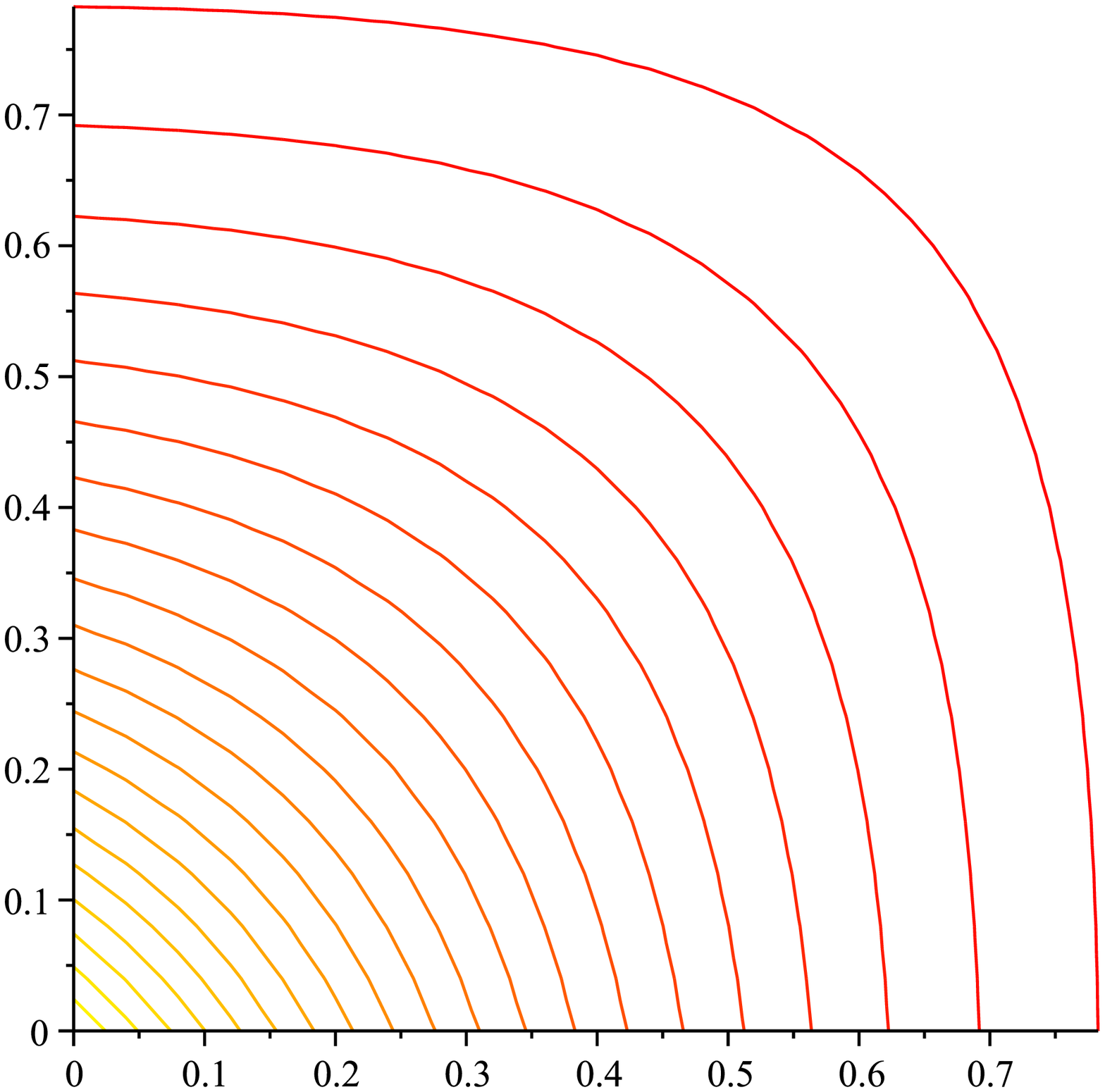}
			\caption{$\varrho_{ij}^2=0.5$}
			\label{fig_gamma:subfig4}
		\end{subfigure}
	}
	\caption{Plots and contour curves for the dependence hazard rate derivative $\gamma_{0,\{i,j\}}$ of a bivariate correlated frailty model 
	  based on a bivariate chi-squared distribution over the range $[0,1]^2$, with different values of $\varrho_{ij}^2$.}
\label{fig:gamma_0_fuer_Zj2_2}
\end{figure}
\section{References}
\bibliographystyle{elsarticle-harv}
\bibliography{references_Bendel_Dobler_Janssen_2014}

\end{document}